\def\BState{\State\hskip-\ALG@thistlm}
\theoremstyle{definition}
\newtheorem{Definition}{Definition}[section]
\theoremstyle{plain}
\newtheorem{Theorem}[Definition]{Theorem}
\newtheorem*{theorem*}{Meta-theorem}
\newtheorem{Corollary}[Definition]{Corollary}
\newtheorem{Proposition}[Definition]{Proposition}
\newtheorem{Lemma}[Definition]{Lemma}
\theoremstyle{remark}
\newtheorem{Remark}[Definition]{Remark}
\newcommand{\N}{\mathbb N}
\newcommand{\Z}{\mathbb Z}
\newcommand{\R}{\mathbb R}
\renewcommand{\H}{\mathcal H}
\renewcommand{\phi}{\varphi}
\renewcommand{\O}{\Omega}
\newcommand{\hio}{H_0^1(\O)}
\newcommand{\jj}{l}
\newcommand\restr[2]{{
  \left.\kern-\nulldelimiterspace 
  #1 
  \right|_{#2} 
  }}
\renewcommand{\d}{\partial}
\renewcommand{\a}{\alpha}
\DeclareMathOperator*{\supp}{supp}
\newcommand{\rst}[1]{\ensuremath{{\mathbin\upharpoonright}%
\raise-.5ex\hbox{$#1$}}}
\DeclarePairedDelimiter{\norm}{\lVert}{\rVert}
\lbrace\begin{array}{@{}l@{}}}%
\begin{document}

\allowdisplaybreaks[4]
\setlength{\parskip}{0pt}

\newcommand{\email}[1]{\protect\href{mailto:#1}{#1}}

\title{Compressed sensing photoacoustic tomography reduces to compressed sensing for undersampled Fourier measurements}

\author{Giovanni S. Alberti\thanks{MaLGa Center, Department of Mathematics, University of Genoa, Via Dodecaneso 35, 16146 Genova, Italy
(\email{giovanni.alberti@unige.it}, \email{matteo.santacesaria@unige.it}).}
\and
Paolo Campodonico\thanks{Department of Applied Mathematics and Theoretical Physics, University of Cambridge, Wilberforce Road, Cambridge CB3 0WA, United Kingdom (\email{pc628@cam.ac.uk}).}
\and
Matteo Santacesaria\footnotemark[1]}

\maketitle

\begin{abstract}
Photoacoustic tomography (PAT) is an emerging imaging modality that aims at measuring the high-contrast optical properties of tissues by means of high-resolution ultrasonic measurements. The interaction between these two types of waves is based on the thermoacoustic effect. In recent years, many works have investigated the applicability of compressed sensing to PAT, in order to reduce measuring times while maintaining a high reconstruction quality. However, in most cases, theoretical guarantees are missing. In this work, we show that in many measurement  setups of practical interest, compressed sensing PAT reduces to compressed sensing for undersampled Fourier measurements. This is achieved by applying known reconstruction formulae in the case of the free-space model for wave propagation, and by applying the theories of Riesz bases and nonuniform Fourier series in the case of the bounded domain model. Extensive numerical simulations illustrate and validate the approach.
\end{abstract}
\providecommand{\keywords}[1]{\textbf{Keywords:} #1}
\keywords{Photoacoustic tomography, compressed sensing, Riesz bases, Riesz sequences, frames, structured sampling, nonuniform Fourier series.}

\section{Introduction}\label{chap:intro}

\subsection{Physical Principles of Photoacoustic Tomography}

Photoacoustic tomography (PAT) is a novel medical imaging modality, arguably the most advanced among the so-called \emph{hybrid} modalities \cite{PAT-2006,wang-anastasio-2015,Kuchment2015}. As its name suggests, photoacoustic tomography combines two different types of waves: electromagnetic (EM) and acoustic, i.e.\ light and ultrasound. The aim of this imaging modality is to infer the optical properties of biological tissues, which are relevant, for example, in the detection of tumours. The advantage of photoacoustic tomography with respect to other imaging techniques is twofold: first,  it is non-invasive, because the wavelengths of the involved electromagnetic and acoustic waves are not dangerous for biological tissues. Secondly, by using two types of waves, photoacoustic tomography combines their advantages, allowing to reconstruct images that feature both a high contrast and a high resolution. In particular, the high contrast is produced due to a much higher absorption of EM energy by cancerous cells, while ultrasound alone would not produce good contrast, since, from an acoustic point of view, both healthy and cancerous tissues are mostly an aqueous substance. On the other hand, the good (submillimeter) resolution is achieved thanks to the ultrasound measurements, whereas the EM transmit poorly in biological tissues. The high quality of PAT images comes precisely from the combination of both light and sound, and would not be possible with either acoustic imaging or electromagnetic imaging modalities alone.

PAT is based on the \textit{thermoacoustic effect}: when tissue is irradiated with a short pulse of electromagnetic radiation, it heats up and consequently expands. This expansion generates a pressure wave that propagates through the object and can be measured outside of latter by wide-band ultrasonic transducers. The absorbed EM energy and the initial pressure it creates are much higher in cancerous cells than in healthy tissues, since cancerous cells are much more absorbent. Thus, if one could reconstruct the initial pressure, the resulting PAT tomogram would contain highly useful diagnostic information.

\subsection{Mathematical Models}
From a mathematical point of view, the image reconstruction problem in photoacoustic tomography can be interpreted as an inverse initial value problem for the wave equation.  The unknown to be determined is the initial pressure $f$ that is generated by the thermoelastic expansion. The propagating pressure wave $p$ satisfies the wave equation and initial conditions: 
\begin{equation}\notag
\d_{tt}p -c^2\Delta p = 0, \quad \restr{p}{t=0} =f, \quad \restr{\d_t p}{t=0} = 0.
\end{equation}
The wave pressure $p$ is measured on a portion $\Gamma \subseteq \d\O$ of an acquisition surface $\d\O$, which is the boundary of a bounded open set $\O\subseteq\R^d$. The inverse problem of PAT consists of the reconstruction of $f$. We  assume that the support of $f$ is contained within $\O$ and that the speed of sound $c$ in the medium is homogeneous and equal to $1$. 
A more sophysticated model for PAT considers the velocity $c$ as non-constant \cite{agranovsky-kuchment-2007,bekhachmi-etal-2016} or as another unknown of the problem \cite{stefanov-uhlmann-2013,liu-uhlmann-2015}. 

We consider here two possible models for photoacoustic tomography, depending on \textit{where} the wave equation is satisfied: the \textit{free-space model} and the \textit{bounded domain model}. The two settings are different both from a mathematical and a practical point of view, as will be illustrated in the following sections.
\subsubsection{Free space model}
This is the most popular model studied in the literature on PAT. According to the free-space model, the pressure wavefront $p$ that propagates across the body satisfies:
\begin{equation}\label{eq:free_space}
\begin{cases}
\d_{tt}p-\Delta p = 0 & \text{in } \R^d \times (0,+\infty),\\
p(\cdot,0) =f & \text{in } \R^d,\\
\d_t p(\cdot,0) = 0 & \text{in } \R^d.
\end{cases}
\end{equation}
In this scenario, the wave keeps propagating on the whole space $\R^d$, passing through the observation surface $\Gamma \subseteq \d\O$ which is therefore \textit{not} considered as a physical barrier for the wave. This acquisition surface is merely the set of points were the ultrasound transducers are located, but they are not assumed to have any impact on the wave. From a practical point of view, this means that the transducers must be suitably designed to be small enough and not invasive, in order not to interfere considerably with the pressure wavefront $p$.\par
In the free-space model, the measurements of the transducers usually are of the form
\begin{equation}\label{eq_g_fs}
g(x,t):=p(x,t), \qquad x \in \Gamma, \ t \in [0,T],
\end{equation}
where $T>0$ is the measurement time. In this case, thanks to the link with the spherical means Radon transform, the inverse problem of the reconstruction of $f$ is now fairly well understood, and uniqueness and stability hold, provided that $\Gamma$ and $T$ are large enough. There exist several reconstruction methods, including time-reversal, closed formulae and eigenfunction expansions. The reader is referred to the review paper \cite{Kuchment2015} and to the references therein.

\subsubsection{Bounded domain model}\label{subsub:bounded}

Let us now turn to the bounded domain model \cite{Ammari2010,Kunyansky2013,2015-acosta-montaldo,2015-holman-kunyansky,2016-chervova-oksanen,Alberti2018}, which is considered in order to take into account the effect of the ultrasonic transducers on the acoustic propagation.
In this case, given $f \in \hio$, the pressure $p$ satisfies the wave equation only inside a Lipschitz bounded domain $\O \subseteq \R^d$ with Dirichlet boundary values and specified initial conditions:
\begin{equation}\label{eq:bounded_domain}
\begin{cases}
\d_{tt}p-\Delta p = 0 & \text{in } \O\times (0,+\infty),\\
p = 0 & \text{on } \d\O \times (0, +\infty), \\
p(\cdot, 0) =f & \text{in } \O, \\
\d_t p(\cdot, 0) = 0 & \text{in } \O.
\end{cases}
\end{equation}
 In this case, the wave does not propagate outside the domain $\O$, but is \textit{reflected} on its boundary $\d\O$. The Dirichlet condition $\restr{p}{\d\O \times(0, +\infty)} = 0$  models the reflection of the wave on the \textit{hard} surface $\d\O$. In this setting, one measures the normal derivative on $\Gamma \subseteq \d\O$:
\begin{equation}\label{eq_g_bd}
g(x,t):= \d_\nu p(x,t), \qquad  x \in \Gamma, t \in [0, T],
\end{equation}
where $\nu$ is the outward normal unit vector of $\d\O$. It can be shown that $g \in L^2(\Gamma\times[0,T])$ (\cite{Alberti2018}). 
It would be possible to consider Neumann boundary conditions instead of Dirichlet boundary conditions, that is, to impose $\restr{\d_\nu p}{ \d\O \times(0, +\infty) }=0$ (corresponding to a \textit{soft} surface $\partial\Omega$ and to a \textit{resonating} cavity $\Omega$). In this case, one would measure $p(x,t)$ for $(x,t) \in \Gamma \times [0,T]$, and the mathematics would be similar. We opted for the Dirichlet condition to illustrate an even more different situation than the free-space scenario, but everything extends, \textit{mutatis mutandis}, to the problem with Neumann boundary conditions.

In this context, the inverse problem with full measurements $g\mapsto f$ is nothing but the classical observability problem for the wave equation, which is a well-known problem in control theory \cite{RUSSELL-1978,LIONS-81,LIONS-1988-1,KOMORNIK-1994,LASIECKA-TRIGGIANI-2000,ERVEDOZA-ZUAZUA-2012}. Roughly speaking, uniqueness always holds provided that $T$ is large enough, and stability depends on $\Gamma$: if $\Gamma$ is large enough, then Lipschitz stability holds \cite{LIONS-1988-1,LIONS-SIAM-88} (e.g., if $\O$ is a ball, a sufficient portion is more than half of its boundary; if $\O$ is a 2D square, a sufficient portion is given by two adjacent sides, while one side does not suffice \cite{1989-grisvard}); if $\Gamma$ is simply relatively open and nonempty, then logarithmic stability holds \cite{laurent-leautaud-2019}.

\subsection{ Compressed sensing in photoacoustic tomography}
Compressed sensing (CS) is a developing area of signal processing that is less than fifteen years old and aims to improve efficiency of signal recovery \cite{Candes2006,Donoho2006,foucartmathematical}. Indeed, the goal of compressed sensing is to exploit some prior knowledge on the unknown  signal in order to reconstruct it from fewer measurements than the standard theory would require. In recent years, much research has been done on applying CS to PAT with the aim of speeding up the reconstruction by taking fewer measurements, see \cite{2009-provost-lesage,2010-guo-etal,2014-arridge-etal,sandbichler2015novel,2016-burgholzer-etal,haltmeier2016compressed,Arridge2016,betcke-etal-2017,Alberti2017,haltmeier2018sparsification,antholzer2019}. Common to most of these works is the following general model for CS PAT.

In the previous sections, the measurements of the acoustic wave were assumed to be collected by \textit{point-like} transducers. We therefore knew the values of $g(x,t) = p(x,t)$ or $g(x,t) = \d_\nu p(x,t)$ for all times $t \in [0,T]$ and for \textit{all} points $x \in \Gamma \subseteq \d\O$. 
The typical setting in CS PAT   consists of replacing these pointwise measurements with spatial averages of the form
\begin{equation}\label{eq:measurements}
g_\jj(t) =(g(\cdot, t), \Phi_\jj)_{L^2(\Gamma)}=
\begin{cases}
(p(\cdot, t), \Phi_\jj)_{L^2(\Gamma)} & \text{for the free-space model}, \\
(\d_\nu p(\cdot, t), \Phi_\jj)_{L^2(\Gamma)} & \text{for the bounded domain model},
\end{cases}
\end{equation}
where $\Phi_\jj \in L^2(\Gamma) \subseteq L^2(\d\O)$, $\jj \in L\subseteq \N$, represent the masks yielding the averages. Throughout the paper we will refer to $\Phi_\jj$ also as sensor, detector or transducer. Because of the fast sound propagation, these are usually supposed to be independent of time.
In the case when $L=\N$ and $\{\Phi_\jj\}_{\jj\in \N}$ were an orthonormal basis of $L^2(\Gamma)$, the measurements $\{g_\jj\}_{\jj\in\N}$ would correspond to the full, pointwise, measurement $g$. However, in practice, it is useful to do subsampling, and to consider fewer measurements, under the assumption that the unknown $f$ is sparse with respect to a suitable dictionary.

This setting leads to the following two questions:
\begin{enumerate}
\item How should the masks $\{\Phi_\jj\}_\jj$ be chosen? How many measurements are needed?
\item How can the reconstruction of $f$ be performed from these undersampled measurements? \label{item:question}
\end{enumerate}
 These questions are only partially addressed in the papers cited above.  
 Overall, the works that use CS for PAT can be divided into two broad categories depending on their answer to question \ref{item:question}: on the one hand, they can exploit compressed sensing to directly reconstruct the unknown initial pressure $f$ from the incomplete data $\{g_\jj\}_{\jj\in L}$. This is the so-called \textit{one-step approach} and is implemented, for instance, in \cite{2009-provost-lesage,2010-guo-etal,2014-arridge-etal,Arridge2016,Alberti2017}. In this scenario, $f$ is assumed to be sparse in a suitable basis (usually wavelets or curvelets) and the transducers are either point-like (as in the early works \cite{2009-provost-lesage,2010-guo-etal}) or binary patterns (for example, after being discretised, the $\Phi_l$'s are represented by Bernoulli or scrambled Hadamard matrices in \cite{2014-arridge-etal,Arridge2016}).
 
 On the other hand, a \textit{two-step approach} consists in first using compressed sensing to complete the partial measurements $\{g_\jj\}_{\jj\in L}$ to obtain the full data $g$, and then reconstructing the unknown $f$ from the completed data $g$ via a standard technique (usually time reversal or back-projection). This is the case, for instance, in   \cite{sandbichler2015novel,2016-burgholzer-etal,haltmeier2016compressed,betcke-etal-2017,haltmeier2018sparsification,antholzer2019}. Here, in order to be able to apply CS techniques, sparsity is enforced by designing suitable temporal transformations that sparsify the incomplete measurements. The detectors $\Phi_l$'s, after being discretised, are usually represented by binary matrices.
 
 Recently, the use of deep learning (DL) for PAT has been investigated for various tasks, see the review paper \cite{hauptmann2020deep} and the references therein. Among these, DL has been used to learn efficient regularisers that can be used as a penalty term in variational formulations of the PAT problem \cite{antholzer2019deep}, or to remove the artifacts and improve the quality of cheap and fast reconstructions \cite{antholzer2019}.

Arguably, the main difficulty of applying CS to PAT is the role of sparsity. In fact, due to the complexity of the forward wave operator, it is unclear how the sparsity assumptions on the initial data $f$ will affect the measurement $g$. For this reason, research on CS PAT has been mostly experimental in nature, and theoretical guarantees are still missing and difficult to derive. As an example, \cite{2009-provost-lesage} compares different sparsity bases for the unknown $f$ via quantitative experiments. On the other hand, two-step approaches circumvent the difficult physics of the wave equation by using compressed sensing with a different aim, namely to complete the partial data. However, these approaches need to assume sparsity of the  measurements, or enforce it by applying empirically crafted sparsifying transforms. Overall, both one- and two-step methods require a more solid theoretical understanding of the role of sparsity.

 The aim of this work is to lay a theoretical foundation to derive reconstruction algorithms that allow the use of compressed sensing in PAT with rigorous guarantees. More precisely, we give partial answers to the above questions, as we briefly summarise below.

\subsection{ Main contribution of this work}\label{sub:main}

In this work, we consider several bounded domains $\Omega$: the 2D disk and the 3D ball with $\Gamma=\partial\Omega$, both in the free-space model and in the bounded domain model, and the 2D square in the bounded domain model with measurements on either one or two sides of the boundary. In all these cases, we are able to construct sensors $\Phi_\jj$ so that the knowledge of $g_\jj$ allows for the reconstruction of a subset of the generalised Fourier coefficients of $f$ with respect to the eigenfunctions $\{\varphi_{n,k}\}_{n,k\in\N}$ of the Dirichlet Laplacian on $\Omega$ (which are naturally indexed by a double index). More formally, in all the cases listed above, we  prove a result of this type.

\begin{theorem*}
There exist sensors $\{\Phi_\jj\}_{\jj\in\N}\subseteq L^2(\Gamma)$ and reconstruction maps $\{R_\jj\}_{\jj\in\N}$ such that
\[
R_\jj\colon g_\jj\longmapsto \bigl((f,\varphi_{n,\jj})_{L^2(\Omega)}\bigr)_{n\in\N},\qquad \jj\in\N.
\]
Both the sensors and the reconstruction maps are  explicitly constructed. In fact, $\{\Phi_\jj\}_{\jj\in\N}$ is an orthonormal basis of $L^2(\Gamma)$.
\end{theorem*}

This result shows that the physical undersampled measurements $\{g_\jj\}_{\jj\in L}$ with a finite $L\subseteq\N$ yield, by using the map $R_\jj$, the quantities
\begin{equation}\label{eq:NJ}
(f,\varphi_{n,\jj})_{L^2(\Omega)},\qquad n\in\N,\jj\in L,
\end{equation}
which are undersampled generalised Fourier measurements. In other words, by suitably choosing the sensors $\Phi_\jj$, \textbf{the problem of CS PAT may be reduced to a CS problem for undersampled (generalised) Fourier measurements}, which is arguably the most studied setup in CS. More precisely, the problem of the recovery of a signal from subsampled Fourier measurements is now well understood, even in this infinite-dimensional setting \cite{Adcock2016,Adcock2017,Alberti2017}, provided that the unknown is sparse in, e.g., a wavelet basis.
However, there are two fundamental differences with respect to classical CS.
\begin{enumerate}
\item In classical CS, the measurements consist of Fourier coefficients with respect to standard complex exponentials. This is (substantially) the case with the 2D square, where the eigenfunctions of the Laplacian are trigonometric functions. However, in circular domains like the 2D disk and the 3D ball the situation is different, since the eigenfunctions are tensorised in the radial and angular variables.
\item Here, the subsampling pattern of the Fourier coefficients $(f,\varphi_{n,\jj})_{L^2(\Omega)}$ has a particular structure. More precisely, it is not possible to subsample a finite random subset of $\{(n,\jj)\in\N\times\N\}$, but the sampling is completely determined by the choice of $L\subseteq\N$, which yields the structured measurements \eqref{eq:NJ}. In other words, the subsampling pattern has the tensorised form $\N\times L$. Few CS results with structured sampling patterns appeared  recently in \cite{boyer2019compressed,ciuciu2019,adcockbrugia2020}.
\end{enumerate}

\subsection{Structure of the paper}
This work is structured as follows. In section~\ref{sec:free} the free-space model is discussed, while Section~\ref{sec:bounded} is devoted to the bounded domain model for PAT. In Section~\ref{sec:CS} we discuss an example of structured subsampling pattern arising in compressed sensing PAT. Some numerical simulations are shown in Section~\ref{sec:numerics}.
Appendix~\ref{sec:3Dball|} contains the derivation for the 3D ball in the free-space model, and in Appendix~\ref{sec:riesz} we discuss some basic facts about Riesz bases of exponentials that are used in Section~\ref{sec:bounded}.

\section{Free-space model for PAT}\label{sec:free}
\subsection{A general formula}
We recall the free-space model for PAT:
\begin{equation}\label{eq:general}
\begin{cases}
\d_{tt}p-\Delta p = 0 & \text{in } \R^d \times (0,+\infty),\\
p(\cdot,0) =f & \text{in } \R^d,\\
\d_t p(\cdot,0) = 0 & \text{in } \R^d.
\end{cases}
\end{equation}
The solution $p$ is naturally extended to an even function in the time variable, setting $p(x,t) :=p(x,-t)$ for $t<0$ and  $x \in \R^d$.

In the following, we  apply the Fourier transform both on the space variable $x$ and in the time variable $t$. With an abuse of notation, they are denoted by the same symbol $\hat{\cdot}$, and the relevant variable will be clear from the context.

Our first result is a general formula that allows for the  recovery of the scalar products of $f$ with a family of functions $\psi_\rho$ from PAT measurements made with an arbitrary sensor $\Phi$.

\begin{Theorem}\label{thm:moment}
Let $\O \subseteq \R^d$ be a bounded Lipschitz domain and $f \in \hio$ be such that $\hat f \in L^1(\R^d)$. Let $p$ be the solution to problem \eqref{eq:general} and $\Phi \in L^2(\Gamma) \subseteq L^2(\d\O)$. Define 
\begin{equation}\notag
g(t) = (p(\cdot,t),\Phi)_{L^2(\Gamma)}=\int_{\Gamma} p(x,t) \overline{\Phi(x)} \, d\sigma(x), \qquad t \in \R.
\end{equation}
Then
\begin{equation}\label{eq:general_moment}
(f, {\psi}_{\rho})_{L^2(\Omega)} =  \hat g(\rho),\qquad \rho \in \R,
\end{equation}
where
\begin{equation}\label{eq:psi}
\begin{aligned}
{\psi}_{\rho}(y) =  \pi \rho^{\frac{d}{2}}\int_{\Gamma} \Phi(x) |x-y|^{-(\frac{d}{2}-1)}  J_{\frac{d}{2}-1}(2\pi \rho |x-y|)\,d\sigma(x),
\end{aligned}
\end{equation}
and $J_{\frac{d}{2}-1}$ is the Bessel function of the first kind of order $\frac{d}{2}-1$.
\end{Theorem}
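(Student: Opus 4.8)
The plan is to reduce the statement to the spatial Fourier representation of the solution of \eqref{eq:general}, and then to match the time--Fourier transform $\hat g$ with the family of scalar products $(f,\psi_\rho)_{L^2(\Omega)}$ via the classical formula for the Fourier transform of the surface measure of a sphere. First I would write down the solution of \eqref{eq:general}: since $\hat f\in L^1(\R^d)$, it is given by the absolutely convergent integral $p(x,t)=\int_{\R^d}\hat f(\xi)\cos(2\pi|\xi|t)\,e^{2\pi i x\cdot\xi}\,d\xi$, which is bounded and continuous on $\R^d\times\R$ and automatically even in $t$, consistently with the extension $p(x,t)=p(x,-t)$. Substituting this into the definition of $g$ and using Fubini's theorem (legitimate since $\Gamma$ is compact, hence $\Phi\in L^1(\Gamma)$, and $\hat f\in L^1(\R^d)$) gives $g(t)=\int_{\R^d}\hat f(\xi)\,\widehat{\Phi_\Gamma}(\xi)\cos(2\pi|\xi|t)\,d\xi$, where $\widehat{\Phi_\Gamma}(\xi):=\int_\Gamma e^{2\pi i x\cdot\xi}\overline{\Phi(x)}\,d\sigma(x)$ is bounded. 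Passing to polar coordinates $\xi=r\omega$ and using parity in $r$, I would rewrite this as $g(t)=\tfrac12\int_{\R}H(r)e^{2\pi i r t}\,dr$ with $H$ the even $L^1(\R)$ function $H(r)=|r|^{d-1}\int_{S^{d-1}}\hat f(|r|\omega)\widehat{\Phi_\Gamma}(|r|\omega)\,d\omega$ (the $\sin$ part dropping out by parity, and $\|H\|_{L^1}\le\|\hat f\|_{L^1}\|\Phi\|_{L^1(\Gamma)}$). Hence $g$ is $\tfrac12$ times the inverse Fourier transform of an $L^1$ function, and Fourier inversion gives, for a.e.\ $\rho>0$,
\[
\hat g(\rho)=\tfrac12 H(\rho)=\frac{\rho^{d-1}}{2}\int_{S^{d-1}}\hat f(\rho\omega)\int_\Gamma e^{2\pi i\rho\, x\cdot\omega}\overline{\Phi(x)}\,d\sigma(x)\,d\omega .
\]

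Next I would treat the left-hand side of \eqref{eq:general_moment}. Using the classical identity $\int_{S^{d-1}}e^{-2\pi i z\cdot\omega}\,d\omega=2\pi|z|^{-(d/2-1)}J_{d/2-1}(2\pi|z|)$ (namely the Fourier transform of the spherical surface measure, rescaled to radius $\rho$), the definition \eqref{eq:psi} rewrites as $\psi_\rho(y)=\tfrac{\rho^{d-1}}{2}\int_\Gamma\Phi(x)\int_{S^{d-1}}e^{-2\pi i\rho(x-y)\cdot\omega}\,d\omega\,d\sigma(x)$. The Bessel kernel $|x-y|^{-(d/2-1)}J_{d/2-1}(2\pi\rho|x-y|)$ is bounded (it extends continuously across $x=y$ because $J_\nu(s)\sim s^\nu$ as $s\to0$), so $\psi_\rho$ is continuous and all integrals below converge absolutely, $f$ being in $L^2(\O)$ with $\supp f\subseteq\O$. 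Taking the conjugate, pairing against $f$, using $\supp f\subseteq\O$ to write $\int_\O f(y)e^{-2\pi i\rho y\cdot\omega}\,dy=\hat f(\rho\omega)$, and applying Fubini, I would arrive at
\[
(f,\psi_\rho)_{L^2(\Omega)}=\frac{\rho^{d-1}}{2}\int_\Gamma\overline{\Phi(x)}\int_{S^{d-1}}\hat f(\rho\omega)\,e^{2\pi i\rho\, x\cdot\omega}\,d\omega\,d\sigma(x),
\]
which is exactly the expression found for $\hat g(\rho)$ after interchanging the $\Gamma$- and $S^{d-1}$-integrations. This establishes \eqref{eq:general_moment} for a.e.\ $\rho>0$, and the case $\rho\le0$ follows from the evenness of $g$ and the corresponding symmetry of $\psi_\rho$.

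The hard part will be the rigorous handling of the time--Fourier transform: since $p(x,\cdot)$, and hence $g$, does not decay as $t\to\infty$, $\hat g$ has to be interpreted either distributionally or, as above, through Fourier inversion for the $L^1$ function $H$; one then needs a little care for the identity to hold pointwise (for all $\rho$ under a mild extra integrability of $\hat f$ on spheres, and for a.e.\ $\rho$ in general). Everything else is bookkeeping with Fubini's theorem, whose hypotheses are guaranteed by the compactness of $\Gamma$, the assumption $\hat f\in L^1(\R^d)$, and the boundedness of the Bessel kernel; the only nontrivial classical ingredient is the Fourier transform of the spherical surface measure.
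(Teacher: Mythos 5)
Your proposal is correct and follows essentially the same route as the paper: spatial Fourier representation of $p$, Fubini, polar coordinates to identify $\hat g$ with the even $L^1$ function $\tfrac{\rho^{d-1}}{2}\int_{\mathbb{S}^{d-1}}\hat f(\rho\omega)\,c(\rho\omega)\,d\sigma(\omega)$, and the Fourier transform of the spherical surface measure to recognise $\psi_\rho$. Your extra care about interpreting $\hat g$ via Fourier inversion of the $L^1$ function $H$ (rather than as a transform of the non-decaying $g$) is a welcome refinement of the same argument, not a different one.
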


\begin{proof}
By applying the spatial Fourier transform  on the functions $p$ and $f$, we obtain:
\begin{equation}\notag
\begin{cases}
\hat{p}''(\xi,t) + 4 \pi^2 |\xi|^2 \hat{p}(\xi,t) = 0, \\
\hat{p}(\xi,0) = \hat{f}(\xi), \quad \hat{p}'(\xi,0) = 0.
\end{cases}
\end{equation}
For each fixed $\xi \in \R^d$, the previous system is a second-order ordinary Cauchy problem in the time variable, whose solution is
\begin{equation}\notag
\hat{p}(\xi,t) = \hat{f}(\xi)\cos(2\pi|\xi|t).
\end{equation}

The measured data $g$ is at each time $t \in \R$: 
\begin{equation}\notag
\begin{aligned}
g(t) & = \int_{\Gamma} p(x,t) \overline{\Phi(x)} \, d\sigma(x) \\
& = \int_{\Gamma}\Bigg(\int_{\mathbb{R}^d}\hat{p}(\xi,t)e^{2\pi i \xi \cdot x} d \xi\Bigg)\  \overline{\Phi(x)}\, d\sigma(x) \\
& = \int_{\mathbb{R}^d} \hat{p}(\xi,t) \underbrace{\Bigg( \int_{\Gamma}  \overline{\Phi(x)} e^{2\pi i \xi \cdot x} d\sigma(x) \Bigg)}_{c(\xi)} d\xi  \\
& = \int_{\mathbb{R}^d} \hat{f}(\xi)\cos(2\pi |\xi| t) \, c(\xi) \, d\xi .
\end{aligned}
\end{equation}
{The use of the inversion formula is justified by the hypothesis $\hat{f} \in L^1(\R^d)$, and the integrals were interchanged thanks to Fubini's theorem since $\O$ is bounded and therefore $\Phi \in L^2(\d\O) \subseteq L^1(\d\O)$.}\par
Let us switch to spherical coordinates: writing $\xi = \rho \omega$ with $\rho \in (0,+\infty)$ and $\omega \in \mathbb{S}^{d-1}$, one has
\begin{equation*}
g(t)  = \int_0^{+\infty} \cos(2 \pi \rho t) \underbrace{\Bigg(\int_{\mathbb{S}^{d-1}} \hat{f}(\rho \omega) c(\rho \omega) d\sigma(\omega) \Bigg) \rho^{d-1}}_{2h(\rho)}   \, d\rho   = 2 \int_0^{+\infty} \cos(2 \pi \rho t) \, h(\rho) \, d\rho.
\end{equation*}
Let us evenly reflect the function $h$: setting $h(\rho):=h(-\rho)$ for $\rho<0$, one has
\begin{equation*}
g(t) = \int_{\R} h(\rho)\cos(2 \pi \rho t) \, d\rho
 = \int_{\R} h(\rho)e^{2 \pi i\rho t} \, d\rho,
\end{equation*}
so that $h = \hat g$. Thus, from the definition of $h$ and using that $\supp f\subseteq \O$, we readily derive:
\begin{equation}\notag
\begin{aligned}
\hat g(\rho)& = \frac{\rho^{d-1}}{2} \int_{\mathbb{S}^{d-1}} \hat{f}(\rho \omega) c(\rho \omega) d\sigma(\omega) \\
& = \frac{\rho^{d-1}}{2} \int_{\mathbb{S}^{d-1}}\Bigg( \int_{\mathbb{R}^d}f(y)e^{-2\pi i \rho \omega \cdot y}dy \Bigg) \Bigg(\int_{\Gamma}  \overline{\Phi(x)} e^{2\pi i \rho \omega \cdot x} d\sigma(x) \Bigg) d\sigma(\omega)  \\
& =(f, {\psi}_{\rho})_{L^2(\Omega)},
\end{aligned}
\end{equation}
where
\begin{equation}\label{eq:psi1}
\psi_{\rho}(y)= \frac{\rho^{d-1}}{2} \int_{\Gamma} \Phi(x) \Bigg( \int_{\mathbb{S}^{d-1}} e^{2\pi i \rho (y-x) \cdot \omega} d\sigma(\omega) \Bigg) d\sigma(x).
\end{equation}
Let us recall the following formula reported in \cite[Appendix~B.4]{Grafakos2008}:
\begin{equation*}
\int_{\mathbb{S}^{d-1}} e^{-2\pi i \zeta \cdot \omega} d\sigma(\omega)  = \widehat{d\sigma_{\mathbb{S}^{d-1}}}(\zeta) = 2\pi |\zeta|^{-(\frac{d}{2}-1)} J_{\frac{d}{2}-1}(2\pi |\zeta|), \qquad \zeta \in \R^d.
\end{equation*}
Then, plugging in $\zeta = \rho (x-y)$, \eqref{eq:psi} follows immediately from \eqref{eq:psi1}.
\end{proof}
{We can apply the previous theorem to different masks $\Phi_\jj$ and measurements $g_\jj$, $\jj \in \N$. We rewrite formula \eqref{eq:general_moment} to make  the dependence on $\jj$ explicit:
\begin{equation}\label{eq:moment}
(f, {\psi}_{\jj, \rho})_{L^2(\Omega)} = \hat g_\jj(\rho),
\end{equation}
 where $\psi_{\jj,\rho}$ is defined as in the statement of the theorem.} The previous formula gives an explicit relation between the detector $\Phi_\jj$ and the probing function $\psi_{\jj,\rho}$ (which  depends  also on the frequency $\rho \in \R$). It is valid in any dimension $d$, for any bounded open set $\O$ and for any portion $\Gamma$ of the acquisition surface on which detectors are located.

The previous formula provides an answer to our initial question:  taking the Fourier transform of the time-dependent data $g_\jj$ corresponds to measuring the moment of the unknown pressure $f$ against the function $\psi_{\jj,\rho}$, which depends on the chosen mask $\Phi_\jj$. 
The main question is whether, upon reasonable choices of the mask $\Phi_\jj$, the corresponding probing functions $\psi_{\jj,\rho}$ will help to reconstruct the unknown function $f$.

While this question remains open for general domains, a positive answer  may be given in the particular case where the masks $\Phi_\jj$ are chosen as the normal derivatives of the Dirichlet eigenfunctions of the Laplacian in $\Omega$.
\begin{Theorem}[{\cite[Theorem~3]{agranovsky-kuchment-2007}}]\label{thm:sine}
Let $\O \subseteq \R^d$ be a bounded Lipschitz domain and $f \in \hio$ be such that $\hat f \in L^1(\R^d)$. Let $p$ be the solution to problem \eqref{eq:general} and $\Phi =\partial_\nu\psi\in L^2(\Gamma) \subseteq L^2(\d\O)$ be the normal derivative of a Dirichlet eigenfunction $\psi\in H^1_0(\Omega)$ satisfying
\begin{equation*}
\left\{
\begin{array}{ll}
-\Delta\psi=\lambda^2\psi & \text{in $\Omega$,}\\
\psi=0&\text{on $\partial\Omega$,}
\end{array}
\right.
\end{equation*}
where $\lambda^2>0$ is the corresponding eigenvalue. Define 
\begin{equation*}
g(t) = (p(\cdot,t),\Phi)_{L^2(\Gamma)}=\int_{\Gamma} p(x,t) \overline{\partial_\nu\psi(x)} \, d\sigma(x), \qquad t \ge 0.
\end{equation*}
Then $g\in L^1([0,+\infty))$ (in fact, compactly supported when $d$ is odd) and 
\begin{equation}\label{eq:sin}
(f, \psi)_{L^2(\Omega)} =  -\frac{1}{\lambda}\int_0^{+\infty}g(t)\sin(\lambda t)\,dt.
\end{equation}
\end{Theorem}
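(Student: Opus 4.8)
The natural starting point is Theorem~\ref{thm:moment}, which is already available to us: with the sensor $\Phi=\partial_\nu\psi$ we obtain
\[
(f,\psi_\rho)_{L^2(\Omega)}=\hat g(\rho),\qquad \rho\in\R,
\]
where $\psi_\rho$ is the probing function \eqref{eq:psi} determined by $\Phi=\partial_\nu\psi$. The plan is to \emph{identify} this probing function $\psi_\rho$ explicitly in terms of $\psi$ itself, using the fact that $\psi$ is a Dirichlet eigenfunction with eigenvalue $\lambda^2$, and then to invert the Fourier transform in a way that isolates the single frequency $\rho=\lambda$.

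First I would observe that $\psi_\rho$, as written in \eqref{eq:psi1}, is (up to the $\rho^{d-1}/2$ factor) the integral over $\Gamma$ of $\partial_\nu\psi(x)$ against the function $y\mapsto\widehat{d\sigma_{\mathbb{S}^{d-1}}}(\rho(y-x))$, which is precisely $c_d\,\rho^{-(d/2-1)}$ times a radial solution of the Helmholtz equation $(\Delta+4\pi^2\rho^2)u=0$ centred at $x$. Equivalently, one recognises the inner double integral in \eqref{eq:psi1} (before integrating in $\omega$) as $\int_{\R^d}\overline{\Phi}(x)e^{2\pi i\rho\omega\cdot x}d\sigma(x)$ integrated over the sphere — i.e.\ the restriction of $\hat\Phi$ (as a distribution supported on $\Gamma$) to the sphere of radius $\rho$. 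The key algebraic step is then a \emph{Green's identity / layer potential computation}: integrate the Helmholtz equation $\Delta\psi=-\lambda^2\psi$ against the free outgoing (or standing-wave) Green kernel over $\Omega$, using $\psi|_{\partial\Omega}=0$, to show that the single-layer potential of $\partial_\nu\psi$ reproduces $\psi$ inside $\Omega$ at the resonant frequency $\rho=\lambda$, and produces something that vanishes or is otherwise controlled for $\rho\neq\lambda$. Concretely I expect to arrive at an identity of the shape $\psi_\rho = m(\rho)\,\psi$ on $\Omega$ when $\rho=\lambda$, with an explicit constant, together with decay/integrability of $\hat g$ away from $\lambda$; pairing with $f$ (whose support is in $\Omega$) then gives $\hat g(\rho)$ as a function concentrated near $\lambda$.

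Next, to pass from $\hat g(\rho)=(f,\psi_\rho)$ to the clean formula \eqref{eq:sin}, I would use the even reflection of $g$ and the relation $g(t)=\int_\R h(\rho)e^{2\pi i\rho t}d\rho$ with $h=\hat g$ established in the proof of Theorem~\ref{thm:moment}, invert to write $\int_0^\infty g(t)\sin(\lambda t)\,dt$ as an appropriate evaluation/transform of $h$, and match constants. The sign $-1/\lambda$ and the $\sin(\lambda t)$ weight should fall out of the cosine representation $\hat p(\xi,t)=\hat f(\xi)\cos(2\pi|\xi|t)$ combined with $\int_0^\infty\cos(2\pi\rho t)\sin(\lambda t)\,dt$ understood as a distributional pairing that picks out $\rho=\lambda$. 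The integrability claim $g\in L^1([0,\infty))$ — and compact support of $g$ when $d$ is odd — follows from the strong Huygens principle: the wave generated by $f$ (supported in $\Omega$) and detected through the fixed sensor $\Phi$ on $\Gamma\subseteq\partial\Omega$ is supported in time in a bounded interval in odd dimensions, and decays polynomially in even dimensions; alternatively this is exactly the content cited from \cite{agranovsky-kuchment-2007}, so I would invoke their Theorem~3 for this part rather than reprove it.

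The main obstacle is the Helmholtz/Green's-identity step: making rigorous the claim that the probing function $\psi_\rho$ coming out of Theorem~\ref{thm:moment} collapses to (a multiple of) the eigenfunction $\psi$ exactly at the resonance $\rho=\lambda$, and controlling its behaviour for $\rho\neq\lambda$ well enough that the Fourier inversion in $t$ is legitimate. This requires care about which Green kernel (outgoing vs.\ standing-wave) appears once $p$ is even-reflected in time, about the regularity of $\partial_\nu\psi$ on a merely Lipschitz boundary (so that the layer potentials make sense in $L^2(\Gamma)$), and about interchanging the $\rho$-integral with the $\Omega$-integral. Everything else — the ODE solution in Fourier space, Fubini, the spherical-coordinates bookkeeping, and the final constant-chasing in \eqref{eq:sin} — is routine given Theorem~\ref{thm:moment}.
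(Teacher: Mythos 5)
The paper does not prove this statement at all: it is quoted verbatim from the reference (Theorem~3 of Agranovsky--Kuchment), so there is no internal proof to compare against. Judged on its own terms, your proposal has two concrete gaps, both located exactly at the steps you flag as ``the main obstacle'' and as ``constant-chasing''. \textbf{(a)} The resonance identity you hope for, $\psi_\rho=m(\rho)\,\psi$ on $\Omega$ with $m(\lambda/2\pi)\neq 0$, is false. Writing $u_y(x)=|x-y|^{-(d/2-1)}J_{d/2-1}(2\pi\rho|x-y|)$, which is an \emph{entire} solution of $(\Delta_x+4\pi^2\rho^2)u_y=0$ (it is the regular, standing-wave part of the Helmholtz kernel, smooth across $x=y$), Green's second identity with $\psi|_{\partial\Omega}=0$ and $\Delta\psi=-\lambda^2\psi$ gives $\int_{\partial\Omega}u_y\,\partial_\nu\psi\,d\sigma=(4\pi^2\rho^2-\lambda^2)\int_\Omega u_y\,\psi\,dx$. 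Hence the probing function of Theorem~\ref{thm:moment} carries the factor $(4\pi^2\rho^2-\lambda^2)$ and \emph{vanishes identically} at the resonant frequency $\rho=\lambda/(2\pi)$; consistently, in the paper's own disk computation the prefactor $J_\jj(2\pi\rho)$ kills $\psi_{\jj,\rho}$ at $\rho=\rho_{\jj,n}$, which is precisely why the first formula in \eqref{eq:twoorone} involves the derivative $(\hat g_\jj)'$ and L'H\^opital rather than $\hat g_\jj$ itself. So $\hat g(\lambda/2\pi)=0$ for every $f$ and contains no information about $(f,\psi)_{L^2(\Omega)}$. \textbf{(b)} Independently of (a), the target $\int_0^{+\infty}g(t)\sin(\lambda t)\,dt$ is a \emph{sine} transform of $g$, while $\hat g$ in Theorem~\ref{thm:moment} is the Fourier transform of the even time-extension, i.e.\ the cosine transform. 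The kernel $\int_0^{\infty}\cos(2\pi\rho t)\sin(\lambda t)\,dt$ is the principal value $\lambda/(\lambda^2-4\pi^2\rho^2)$, not a delta at $\rho=\lambda/(2\pi)$, so the sine transform is a Hilbert-transform--type integral of $\hat g(\rho)/(4\pi^2\rho^2-\lambda^2)$ over \emph{all} $\rho$; no ``distributional pairing that picks out $\rho=\lambda$'' occurs. As sketched, the argument therefore does not close at either key step.

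For comparison, the standard proof (the one in the cited reference) is short and avoids the Fourier side entirely. Set $Q(t)=\int_\Omega p(x,t)\overline{\psi(x)}\,dx$. Then $Q''(t)=\int_\Omega\Delta p\,\overline{\psi}\,dx$, and Green's second identity with $\psi|_{\partial\Omega}=0$ and $\Delta\overline{\psi}=-\lambda^2\overline{\psi}$ yields $Q''=-\lambda^2Q-g$. Since $Q(0)=(f,\psi)_{L^2(\Omega)}$ and $Q'(0)=0$, Duhamel gives $Q(t)=(f,\psi)_{L^2(\Omega)}\cos(\lambda t)-\tfrac1\lambda\int_0^t g(s)\sin(\lambda(t-s))\,ds$. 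The local decay of the free-space wave (vanishing on compact sets for large $t$ when $d$ is odd, polynomial decay when $d$ is even) forces $Q(t)\to0$ and gives $g\in L^1([0,+\infty))$ (compactly supported for odd $d$); evaluating along times with $\lambda t\in2\pi\N$ produces exactly \eqref{eq:sin}. Your Fourier-side route could conceivably be repaired via the principal-value identity in (b) combined with the factorisation in (a), but that is substantially harder than this two-line ODE argument and is not what your sketch describes.
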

In the following subsections, we will apply these results to the 2D disk and 3D ball.

\subsection{2D Disk}\label{sub:2ddiskfree}
We  first consider the case of a two-dimensional disk, namely choose $d =2$, $\O=B_{\R^2}(0,1)$, $\Gamma=\d\O=\mathbb{S}^1$, where we identify $\mathbb{S}^1$ as the subset  of complex numbers of modulus one. 

We consider detectors of the form:
\begin{equation}\notag
\Phi_\jj(e^{i\theta}):= e^{i\jj\theta}, \qquad \theta \in [0,2\pi], \ \jj \in \Z.
\end{equation}
{From a practical point of view, this choice corresponds to using two real detectors, modeled by the trigonometric functions $\cos(\jj\theta)$ and $\sin(\jj\theta)$.}\par

\begin{Theorem}\label{thm:2Ddisk_free}
Let $\O=B_{\R^2}(0,1)$, $\Gamma=\d\O = \mathbb{S}^1$, $f \in \hio$ be such that $\hat f \in L^1(\R^2)$, $\jj\in\Z$  and $p$ be the solution to problem \eqref{eq:general} and define
\begin{equation}\notag
g_\jj(t) = (p(\cdot,t),\Phi_\jj)_{L^2(\mathbb{S}^1)}=\int_0^{2\pi} p(e^{i\theta},t) e^{-i\jj\theta} \, d\theta, \qquad t \in [0, +\infty).
\end{equation}
Then
\begin{equation}\label{eq:twoorone}
(f, \tilde{\psi}_{\jj,n})_{L^2(\Omega)} = \frac{1}{2\pi^2} \frac{ (\hat g_\jj)' (\rho_{\jj,n})}{j_{\jj,n} \,J'_\jj(j_{\jj,n})}=-J'_\jj(j_{\jj,n})\int_0^{+\infty}g_\jj(t)\sin(j_{\jj,n} t)\,dt,\qquad n \in \N_+,
\end{equation}
where $\rho_{\jj,n}=\frac{j_{\jj,n}}{2\pi}$ and  $j_{\jj,n}$ is the $n^\text{th}$ positive zero of the Bessel function of the first kind $J_\jj$ and
\begin{equation}\notag
\tilde{\psi}_{\jj,n}(r,\theta)= e^{i\jj\theta} J_\jj(j_{\jj,n}r),\qquad r\in [0,1],\;\theta\in [0,2\pi),
\end{equation}
is an eigenfunction of the Dirichlet Laplacian in $B_{\R^2}(0,1)$ in polar coordinates (see, e.g., \cite{Henrot2006}).
\end{Theorem}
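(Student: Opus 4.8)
The plan is to establish the two equalities in \eqref{eq:twoorone} separately: the first by specialising Theorem~\ref{thm:moment}, the second by specialising Theorem~\ref{thm:sine}. Both hinge on identifying the relevant functions explicitly in the rotationally symmetric geometry of the disk, and the only real computation is the evaluation of an angular integral of a Bessel function.

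For the first equality, apply Theorem~\ref{thm:moment} in the form \eqref{eq:moment} with $d=2$, $\O=B_{\R^2}(0,1)$, $\Gamma=\mathbb{S}^1$ and $\Phi=\Phi_\jj$, which gives $\hat g_\jj(\rho)=(f,\psi_{\jj,\rho})_{L^2(\O)}$ where, by \eqref{eq:psi},
\[
\psi_{\jj,\rho}(y)=\pi\rho\int_0^{2\pi}e^{i\jj\phi}J_0\bigl(2\pi\rho\,|e^{i\phi}-y|\bigr)\,d\phi .
\]
The key step is to evaluate this integral on $\O$: writing $y=re^{i\theta}$ with $r\le 1$ and using the addition theorem for Bessel functions (Graf's formula; equivalently, the Jacobi--Anger/plane-wave expansion together with orthogonality of $\{e^{im\phi}\}_{m\in\Z}$, or the observation that $\psi_{\jj,\rho}$ solves the Helmholtz equation with angular index $\jj$ and is regular at the origin), one obtains
\[
\psi_{\jj,\rho}(re^{i\theta})=2\pi^2\rho\,J_\jj(2\pi\rho)\,e^{i\jj\theta}J_\jj(2\pi\rho r),\qquad r\le 1 .
\]
Thus on $\O$ the probing function is a real scalar multiple (depending on $\rho$) of $\Psi_\rho(r,\theta):=e^{i\jj\theta}J_\jj(2\pi\rho r)$, and $\Psi_{\rho_{\jj,n}}=\tilde\psi_{\jj,n}$ precisely because $2\pi\rho_{\jj,n}=j_{\jj,n}$; hence $\hat g_\jj(\rho)=2\pi^2\rho\,J_\jj(2\pi\rho)\,(f,\Psi_\rho)_{L^2(\O)}$. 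Since $\rho\mapsto(f,\Psi_\rho)_{L^2(\O)}$ is smooth (differentiate under the integral sign, $\O$ being bounded and $J_\jj$ and its derivatives being bounded on bounded sets), $\hat g_\jj$ is differentiable; differentiating the last identity and evaluating at $\rho=\rho_{\jj,n}$, every term except the one in which $J_\jj(2\pi\rho)$ is differentiated vanishes because $J_\jj(j_{\jj,n})=0$, which yields $(\hat g_\jj)'(\rho_{\jj,n})=2\pi^2 j_{\jj,n}J'_\jj(j_{\jj,n})\,(f,\tilde\psi_{\jj,n})_{L^2(\O)}$ (using $2\pi\rho_{\jj,n}=j_{\jj,n}$), and rearranging gives the first equality; note $J'_\jj(j_{\jj,n})\neq0$ since the positive zeros of $J_\jj$ are simple.

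For the second equality, invoke Theorem~\ref{thm:sine} with $\psi=\tilde\psi_{\jj,n}$. This is a Dirichlet eigenfunction of $-\Delta$ on $B_{\R^2}(0,1)$ (separation of variables in polar coordinates) with eigenvalue $\lambda^2=j_{\jj,n}^2$, so $\lambda=j_{\jj,n}$, and on $\Gamma=\mathbb{S}^1$ its outward normal derivative is $\partial_\nu\tilde\psi_{\jj,n}=\partial_r\bigl(e^{i\jj\theta}J_\jj(j_{\jj,n}r)\bigr)\big|_{r=1}=j_{\jj,n}J'_\jj(j_{\jj,n})\,e^{i\jj\theta}=j_{\jj,n}J'_\jj(j_{\jj,n})\,\Phi_\jj$. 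Therefore the measurement associated with $\partial_\nu\tilde\psi_{\jj,n}$ equals $j_{\jj,n}J'_\jj(j_{\jj,n})\,g_\jj(t)$ (the scalar is real, so the conjugation in the inner product is immaterial), and \eqref{eq:sin} gives
\[
(f,\tilde\psi_{\jj,n})_{L^2(\O)}=-\frac{1}{j_{\jj,n}}\int_0^{+\infty} j_{\jj,n}J'_\jj(j_{\jj,n})\,g_\jj(t)\sin(j_{\jj,n}t)\,dt=-J'_\jj(j_{\jj,n})\int_0^{+\infty} g_\jj(t)\sin(j_{\jj,n}t)\,dt .
\]

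The main obstacle is the explicit evaluation of the angular integral defining $\psi_{\jj,\rho}$ on $\O$, i.e.\ the identity $\int_0^{2\pi}e^{i\jj\phi}J_0(2\pi\rho|e^{i\phi}-re^{i\theta}|)\,d\phi=2\pi e^{i\jj\theta}J_\jj(2\pi\rho)J_\jj(2\pi\rho r)$ for $r\le 1$; once this is in hand, the differentiation argument for the first equality and the reduction to Theorem~\ref{thm:sine} for the second are routine. Two minor points to keep track of: the index signs when $\jj<0$ (use $J_{-\jj}=(-1)^\jj J_\jj$ and $j_{-\jj,n}=j_{\jj,n}$, so the stated formulae are unchanged), and the fact that the hypothesis $\hat f\in L^1(\R^2)$ is exactly what is required to apply both Theorem~\ref{thm:moment} and Theorem~\ref{thm:sine}.
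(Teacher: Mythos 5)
Your proof is correct and follows essentially the same route as the paper: Theorem~\ref{thm:moment} plus an explicit evaluation of $\psi_{\jj,\rho}$ as $2\pi^2\rho\, J_\jj(2\pi\rho)\,e^{i\jj\theta}J_\jj(2\pi\rho r)$ on $\O$ for the first equality, and Theorem~\ref{thm:sine} with $\partial_\nu\tilde\psi_{\jj,n}=j_{\jj,n}J'_\jj(j_{\jj,n})\Phi_\jj$ for the second. The only cosmetic differences are that the paper derives the product formula by applying Lemma~\ref{lem:2d} twice to the double-integral form \eqref{eq:psi1} rather than invoking Graf's addition theorem on \eqref{eq:psi} (the two computations are equivalent), and it extracts the first equality via L'H\^opital's rule instead of your product-rule differentiation at $\rho=\rho_{\jj,n}$, which amounts to the same cancellation since $J_\jj(j_{\jj,n})=0$.
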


As anticipated in Section~\ref{sub:main}, the coefficients  $\big((f, \tilde{\psi}_{\jj, n})_{L^2(\Omega)}\big)_{\jj \in \Z, n \in \N_+}$ can be interpreted as \textit{generalised Fourier coefficients}, since $\tilde{\psi}_{\jj, n}$ are eigenfunctions of the Dirichlet Laplacian and, up to normalisation, form an orthonormal basis of $L^2(\Omega)$. Classical compressed sensing has focused on subsampled Fourier measurements, and from this
 point of view, we are interested in taking few measurements, which corresponds to choosing few masks $\Phi_\jj$ for the detectors. We therefore have data $g_\jj$ at our disposal for few $\jj$'s. For each of these $\jj$, by using one of the two reconstruction formulas given in \eqref{eq:twoorone}, we  reconstruct the coefficients $(f, \tilde{\psi}_{\jj, n})_{L^2(\Omega)}$ for \textit{every} $n \in \N_+$, since it simply corresponds to selecting appropriate values for the free parameter $n$. This gives rise to a peculiar compressed sensing problem with a precisely structured subsampling pattern. 
 
Let us mention two issues that need to be addressed: first, from a computational point of view, computing the derivative of $\hat g_\jj$ can be problematic when the data is imprecise or corrupted by noise; this is similar to the problem that occurs with the Hankel transform method described in \cite[\S19.4.1.1]{Kuchment2015}, and could be addressed in a similar way.
Secondly, since in dimension $d=2$ the acoustic wave never leaves the open set $\O = B_{\R^2}(0,1)$ but only decays \cite{Evans}, the computation  of \eqref{eq:twoorone} is inaccurate since $g_\jj$ is  known only in the interval $[0,T]$ and not in $[0, +\infty)$, as it should be necessary.

The proof of this result makes use of the following identity.
\begin{Lemma}\label{lem:2d}
Let $\zeta= |\zeta|e_\alpha \in \mathbb{R}^2$ with $e_\alpha =(\cos\alpha,\sin\alpha)\in \mathbb{S}^1$. The following identity holds:
\begin{equation}\label{eq:lem2d}
\int_0^{2\pi} e^{-2\pi i \zeta \cdot e_\theta}e^{i\jj\theta}d\theta =  2\pi (-i)^\jj e^{i\jj\alpha} J_\jj(2\pi|\zeta|),\qquad \jj \in \Z.
\end{equation}
\end{Lemma}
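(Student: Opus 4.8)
\textbf{Proof proposal for Lemma~\ref{lem:2d}.}
The plan is to reduce the claimed identity to the classical integral representation of the Bessel function $J_\jj$, namely
\[
J_\jj(z)=\frac{1}{2\pi}\int_0^{2\pi}e^{i(\jj\tau-z\sin\tau)}\,d\tau=\frac{(-i)^\jj}{2\pi}\int_0^{2\pi}e^{i z\cos\varphi}e^{i\jj\varphi}\,d\varphi,\qquad \jj\in\Z,
\]
the second form following from the first by the substitution $\tau=\varphi+\tfrac{\pi}{2}$ together with $2\pi$-periodicity of the integrand. First I would expand the dot product in the exponent: writing $e_\theta=(\cos\theta,\sin\theta)$ and $\zeta=|\zeta|e_\alpha$, we have $\zeta\cdot e_\theta=|\zeta|\cos(\theta-\alpha)$, so the left-hand side of \eqref{eq:lem2d} becomes
\[
\int_0^{2\pi}e^{-2\pi i|\zeta|\cos(\theta-\alpha)}e^{i\jj\theta}\,d\theta.
\]

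Next I would perform the change of variables $\varphi=\theta-\alpha$; since the integrand is $2\pi$-periodic in $\theta$, the range of integration may be kept as $[0,2\pi]$ (or any interval of length $2\pi$), and we obtain
\[
e^{i\jj\alpha}\int_0^{2\pi}e^{-2\pi i|\zeta|\cos\varphi}e^{i\jj\varphi}\,d\varphi.
\]
Finally, I would apply the integral representation above with $z=-2\pi|\zeta|$, using that $J_\jj$ is even or odd according to the parity of $\jj$ — equivalently $(-i)^\jj J_\jj(-z)=(-i)^\jj(-1)^\jj J_\jj(z)=i^\jj J_\jj(z)$ — so that
\[
\int_0^{2\pi}e^{-2\pi i|\zeta|\cos\varphi}e^{i\jj\varphi}\,d\varphi=2\pi\, i^\jj J_\jj(-2\pi|\zeta|)\cdot(-1)^{\text{?}}
\]
must be tracked carefully; the cleanest route is to substitute $z=2\pi|\zeta|$ and $\varphi\mapsto\varphi+\pi$ (again exploiting periodicity) to turn $-2\pi|\zeta|\cos\varphi$ into $+2\pi|\zeta|\cos\varphi$, picking up a factor $e^{i\jj\pi}=(-1)^\jj$, and then read off $2\pi(-i)^\jj J_\jj(2\pi|\zeta|)$ directly. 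Combining with the prefactor $e^{i\jj\alpha}$ yields \eqref{eq:lem2d}.

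The only real point requiring care — the ``main obstacle,'' though it is a minor one — is bookkeeping the phase factors: the sign inside the cosine, the parity relation $J_{\jj}(-z)=(-1)^\jj J_\jj(z)$, and the $(-i)^\jj$ versus $i^\jj$ must be reconciled consistently. One can sidestep any ambiguity by verifying the end result on the special case $\jj=0$ (where the formula reduces to the standard $\int_0^{2\pi}e^{-2\pi i|\zeta|\cos\varphi}\,d\varphi=2\pi J_0(2\pi|\zeta|)$) and $\jj=1$, and by noting that both sides are analytic in $|\zeta|$, so matching the power-series expansions in $|\zeta|$ term by term also closes the argument. Everything else is a routine substitution and an appeal to a tabulated Bessel integral.
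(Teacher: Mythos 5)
Your proof is correct and is essentially the paper's argument: both reduce the integral to the classical representation $J_\jj(z)=\frac{1}{2\pi}\int_0^{2\pi}e^{i(\jj\tau-z\sin\tau)}\,d\tau$ by a periodicity-preserving shift of the angular variable, the only difference being that the paper absorbs the rotation by $\alpha$ and the quarter-turn converting $\cos$ to $\sin$ into a single substitution $\beta=\tfrac{\pi}{2}-\alpha+\theta$, whereas you perform them separately (with an extra half-turn to fix the sign), which is where your $(-i)^\jj$ arises from $e^{-i\jj\pi/2}$ just as in the paper. The phase bookkeeping you flag does close consistently along your ``cleanest route,'' so there is no gap.
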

\begin{proof}
We readily derive
\begin{equation*}
\int_0^{2\pi} e^{-2\pi i \zeta \cdot e_\theta}e^{i\jj\theta}d\theta  =  \int_0^{2\pi} e^{-2\pi i |\zeta| e_\alpha \cdot e_\theta}e^{i\jj\theta}d\theta   = \int_0^{2\pi} e^{-2\pi i |\zeta| \sin(\frac{\pi}{2}-\alpha + \theta)}e^{i\jj\theta}d\theta .
\end{equation*}
By making the substitution $\beta= \frac{\pi}{2}-\alpha+\theta$, so that $d\beta = d\theta$, and exploiting the $2\pi$-periodicity of the integrand function, we obtain
\begin{equation*}
\int_0^{2\pi} e^{-2\pi i \zeta \cdot e_\theta}e^{i\jj\theta}d\theta   = \int_{\frac{\pi}{2}-\alpha}^{\frac{5}{2}\pi-\alpha} e^{-2\pi i |\zeta| \sin\beta}\,e^{i\jj(\beta + \alpha - \frac{\pi}{2})}d\beta 
= e^{i\jj(\alpha-\frac{\pi}{2}  )} \int_0^{2\pi} e^{-2\pi i |\zeta| \sin\beta}\,e^{i\jj\beta}d\beta .
\end{equation*}
Finally, identity \eqref{eq:lem2d} follows immediately from the well-known  integral representation of Bessel functions \cite[eq.~(9.19)]{temme-1996}.
\end{proof}

We are now ready to prove Theorem~\ref{thm:2Ddisk_free}.

\begin{proof}[Proof of Theorem~\ref{thm:2Ddisk_free}]
The second part of \eqref{eq:twoorone} is an immediate consequence of Theorem~\ref{thm:sine}, since $\tilde{\psi}_{\jj,n}$ is a Dirichlet eigenfunction of the Laplacian in the disk with eigenvalue $j_{\jj,n}^2$ and
\[
\partial_\nu \tilde{\psi}_{\jj,n}=\partial_r \tilde{\psi}_{\jj,n}|_{r=1}=j_{\jj,n} J'_\jj(j_{\jj,n}) \Phi_\jj.
\]
It remains to prove the first identity of \eqref{eq:twoorone}.

By Theorem~\ref{thm:moment} we have
\begin{equation}\label{eq:general_moment2}
(f, {\psi}_{\jj, \rho})_{L^2(\Omega)} =  \hat g_\jj(\rho),
\end{equation}
where, by \eqref{eq:psi1}, we can write
\begin{equation}\label{eq:2d_disk2}
\begin{aligned}
\psi_{\jj, \rho}(y) & = \frac{\rho}{2}   \int_{\Gamma} \Phi_\jj(x) \int_{\mathbb{S}^1}e^{2\pi i\rho(y-x)\cdot\omega} \, d\sigma(\omega) d\sigma(x)  \\
& =\frac{\rho}{2} \int_{\mathbb{S}^1} e^{2\pi i \rho y \cdot \omega} \int_{\Gamma} \Phi_\jj(x) e^{-2\pi i \rho x \cdot \omega} d\sigma(x) \, d\sigma(\omega), \qquad y \in \O.
\end{aligned}
\end{equation}

Let us obtain an explicit formula for $\psi_{\jj,\rho}$. Parametrising $\mathbb{S}^1$ in the usual way, writing $x=e^{i\theta}=e_\theta$ and $\omega = e^{i\alpha}=e_\alpha$ ($\a,\theta \in [0,2\pi]$), equation \eqref{eq:2d_disk2} becomes
\begin{equation}\label{eq:psi2}
\begin{aligned}
\psi_{\jj, \rho}(y) & = \frac{\rho}{2} \int_0^{2\pi} e^{2\pi i \rho y \cdot e_\alpha} \int_0^{2\pi} e^{-2\pi i \rho e_\theta \cdot e_\alpha}e^{i\jj\theta} d\theta d\alpha, \qquad  y \in B_{\R^2}(0,1).
\end{aligned}
\end{equation}
Using  identity \eqref{eq:lem2d} twice, writing $y = |y|e_\nu$ in \eqref{eq:psi2}, so that $- y = |y|e_{\nu+\pi}$, we have
\begin{equation}\notag
\begin{aligned}
\psi_{\jj, \rho}(y) 
& =\frac{\rho}{2}  \int_0^{2\pi} e^{2\pi i \rho y \cdot e_\alpha} \big(2\pi(-i)^\jj  e^{i\jj\alpha} J_\jj(2\pi\rho) \big) d\alpha \\
& = \pi (-i)^\jj \rho J_\jj(2\pi\rho) \int_0^{2\pi} e^{-2\pi i \rho |y| e_{\nu+\pi} \cdot e_\alpha}e^{i\jj\alpha} \, d\alpha \\
& =  (-1)^\jj 2\pi^2 \rho J_\jj(2\pi\rho) \, e^{i\jj(\nu+\pi)} J_\jj(2\pi\rho|y|) \\
& = 2\pi^2 \rho J_\jj(2\pi\rho) \, e^{i\jj\nu} J_\jj(2\pi\rho|y|) .
\end{aligned}
\end{equation}
As a consequence, by \eqref{eq:general_moment2} and L'H\^opital's rule we have
\[
\frac{1}{2\pi^2} \frac{ (\hat g_\jj)' (\rho_{\jj,n})}{j_{\jj,n} \,J'_\jj(j_{\jj,n})}=\lim_{\rho \to \rho_{\jj,n} }\frac{1}{2\pi^2}\frac{\hat  g_\jj(\rho)}{\rho \, J_\jj(2\pi\rho)}=
\lim_{\rho \to \rho_{\jj,n} } (f, y\mapsto e^{i\jj\nu} J_\jj(2\pi\rho|y|))_{L^2(\Omega)}
=(f,\tilde{\psi}_{\jj,n})_{L^2(\Omega)},
\]
since $2\pi\rho_{\jj,n}=j_{\jj,n}$.
\end{proof}

\subsection{3D Ball}

A similar result holds when the detectors are located on the surface of a sphere in $3$ dimensions.
Let $d=3$, $\O=B_{\R^3}(0,1)$, $\Gamma= \d\O = \mathbb{S}^2$. We use spherical coordinates, and write $x \in \mathbb{S}^2$ as $x=(\sin\theta\cos\phi, \sin\theta\sin\phi, \cos\theta)$ with $\theta \in [0,\pi]$ and  $\phi \in [0,2\pi]$.
We consider detectors of the form
\begin{equation}\notag
\Phi_{l,m}(x) = Y_{l}^m(x) = P_l^m(\cos\theta)e^{im \varphi},\qquad l \in \N, m \in \{-l,...,l\},
\end{equation}
where  $Y_{l}^m$ is the spherical harmonic function of degree $l$ and order $m$ and  $P_l^m$ is the associated Legendre polynomial.

\begin{Theorem}\label{thm:3Ddisk_free}
Let $\O=B_{\R^3}(0,1)$, $\Gamma=\d\O = \mathbb{S}^2$, $f \in \hio$ be such that $\hat f \in L^1(\R^3)$, $l \in \N$, $m \in \{-l,...,l\}$ and $p$ be the solution to problem \eqref{eq:general} and define
\begin{equation}\notag
g_{l,m}(t) = (p(\cdot,t),\Phi_{l,m})_{L^2(\mathbb{S}^2)}=\int_{\mathbb S^2} p(x,t) \overline{\Phi_{l,m}(x)} \, d\sigma(x), \qquad t \in [0, +\infty).
\end{equation}
Then
\begin{equation}\label{eq:twoorone3}
(f, \tilde{\psi}_{l,m, n})_{L^2(\Omega)} =\frac{(\hat g_{l,m})'(\rho_{l,m})}{4\pi j_{l+\frac{1}{2},n}^2 \, j_l'(j_{l+\frac{1}{2},n})}=-j_l'(j_{l+\frac{1}{2},n})\int_0^{+\infty}g(t)\sin(j_{l+\frac{1}{2},n} t)\,dt,
\qquad n\in\N_+,
\end{equation}
where $\rho_{l,n}=\frac{j_{l+\frac{1}{2},n}}{2\pi}$, $j_{l+\frac{1}{2},n}$ is the $n^\text{th}$ zero of the spherical Bessel function $j_l(x) = \sqrt{\frac{\pi}{2x}} J_{l+\frac{1}{2}}(x)$ (or, equivalently, the $n^\text{th}$ zero of the Bessel function $J_{l+\frac{1}{2}}$), and
\begin{equation}\notag
\tilde{\psi}_{l,m,n}(y)= j_l(j_{l+\frac{1}{2},n}|y|)\,  Y^m_l(y/|y|),\qquad y \in B_{\R^3}(0,1),
\end{equation}
is an eigenfunction of the Dirichlet Laplacian in $B_{\R^3}(0,1)$ (see, e.g., \cite{Henrot2006}).

\end{Theorem}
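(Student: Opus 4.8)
The plan is to follow, \emph{mutatis mutandis}, the proof of Theorem~\ref{thm:2Ddisk_free}, replacing the planar identity of Lemma~\ref{lem:2d} by its three-dimensional counterpart — a form of the Funk--Hecke theorem (equivalently, the Rayleigh plane-wave expansion) for spherical harmonics. As in the planar case there are two independent parts: the last equality in \eqref{eq:twoorone3} comes from Theorem~\ref{thm:sine}, while the first comes from Theorem~\ref{thm:moment} together with an explicit computation of $\psi_{l,m,\rho}$ and one application of L'H\^opital's rule.

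For the time-domain formula, one observes that $\tilde\psi_{l,m,n}(y)=j_l(j_{l+\frac{1}{2},n}|y|)\,Y_l^m(y/|y|)$ is a Dirichlet eigenfunction of the Laplacian on $B_{\R^3}(0,1)$ with eigenvalue $\lambda^2=j_{l+\frac{1}{2},n}^2$ (the boundary condition holding precisely because $j_l(j_{l+\frac{1}{2},n})=0$), and that its normal derivative on $\mathbb{S}^2$ equals $\partial_r\tilde\psi_{l,m,n}$ evaluated at $r=1$, that is, $j_{l+\frac{1}{2},n}\,j_l'(j_{l+\frac{1}{2},n})\,\Phi_{l,m}$. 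Since $\Phi\mapsto g$ is $\R$-linear in Theorem~\ref{thm:sine} and the scalar $j_{l+\frac{1}{2},n}j_l'(j_{l+\frac{1}{2},n})$ is real, applying \eqref{eq:sin} with $\psi=\tilde\psi_{l,m,n}$ and dividing by this scalar yields $(f,\tilde\psi_{l,m,n})_{L^2(\Omega)}=-j_l'(j_{l+\frac{1}{2},n})\int_0^{+\infty}g_{l,m}(t)\sin(j_{l+\frac{1}{2},n}t)\,dt$. Since $d=3$ is odd, $g_{l,m}$ is in fact compactly supported in time, so this formula involves only finitely much data.

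For the frequency-domain formula, Theorem~\ref{thm:moment} gives $(f,\psi_{l,m,\rho})_{L^2(\Omega)}=\hat g_{l,m}(\rho)$, and \eqref{eq:psi1} (after a Fubini interchange) yields
\[
\psi_{l,m,\rho}(y)=\frac{\rho^2}{2}\int_{\mathbb{S}^2}e^{2\pi i\rho\, y\cdot\omega}\left(\int_{\mathbb{S}^2}Y_l^m(v)\,e^{-2\pi i\rho\, v\cdot\omega}\,d\sigma(v)\right)d\sigma(\omega),\qquad y\in B_{\R^3}(0,1).
\]
The crucial ingredient — the analogue of Lemma~\ref{lem:2d} — is the identity $\int_{\mathbb{S}^2}e^{-2\pi i\rho\, v\cdot\omega}\,Y_l^m(v)\,d\sigma(v)=4\pi(-i)^l\,j_l(2\pi\rho)\,Y_l^m(\omega)$, which follows from the Funk--Hecke theorem together with $\int_{-1}^1 e^{-2\pi i\rho t}P_l(t)\,dt=2(-i)^l j_l(2\pi\rho)$, and whose constant $4\pi$ is independent of the normalisation chosen for $Y_l^m$. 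Applying it to the inner $v$-integral and then, after writing $y=|y|v$ and using $Y_l^m(-v)=(-1)^l Y_l^m(v)$, a second time to the $\omega$-integral, the phase factors combine as $(-i)^l\cdot(-1)^l(-i)^l=((-i)(i))^l=1$, and one obtains $\psi_{l,m,\rho}(y)=8\pi^2\rho^2\,j_l(2\pi\rho)\,j_l(2\pi\rho|y|)\,Y_l^m(y/|y|)$. Finally, since $2\pi\rho_{l,n}=j_{l+\frac{1}{2},n}$ is a zero of $j_l$, both $\hat g_{l,m}$ and $\rho\mapsto 8\pi^2\rho^2 j_l(2\pi\rho)$ vanish at $\rho_{l,n}$, so L'H\^opital's rule applied to the identity $(f,\,y\mapsto j_l(2\pi\rho|y|)Y_l^m(y/|y|))_{L^2(\Omega)}=\hat g_{l,m}(\rho)/(8\pi^2\rho^2 j_l(2\pi\rho))$ gives, as $\rho\to\rho_{l,n}$, the value $(f,\tilde\psi_{l,m,n})_{L^2(\Omega)}$ on the left (the limit passing inside the scalar product by continuity of $\rho\mapsto j_l(2\pi\rho|\cdot|)$ in $L^2(\Omega)$, exactly as in the planar case) and $(\hat g_{l,m})'(\rho_{l,n})/(16\pi^3\rho_{l,n}^2 j_l'(j_{l+\frac{1}{2},n}))$ on the right; since $16\pi^3\rho_{l,n}^2=4\pi j_{l+\frac{1}{2},n}^2$, this is the asserted expression.

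The only genuinely new point is the Funk--Hecke/Rayleigh identity above, and the main care needed is bookkeeping: tracking its constant and the two powers of $(-i)$ — one from each of the two angular integrations — so that they combine to the stated factor $4\pi j_{l+\frac{1}{2},n}^2 j_l'(j_{l+\frac{1}{2},n})$. The analytic points — differentiability of $\hat g_{l,m}$ near $\rho_{l,n}$, the interchange of limit and scalar product, and the hypotheses needed to invoke Theorems~\ref{thm:moment} and~\ref{thm:sine} — are inherited verbatim from the 2D disk argument.
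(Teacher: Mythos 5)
Your proposal is correct and follows essentially the same route as the paper's proof in Appendix~\ref{sec:3Dball|}: the time-domain formula via Theorem~\ref{thm:sine} together with the normal-derivative computation $\partial_\nu\tilde\psi_{l,m,n}=j_{l+\frac{1}{2},n}j_l'(j_{l+\frac{1}{2},n})\Phi_{l,m}$, and the frequency-domain formula via Theorem~\ref{thm:moment}, two applications of the spherical-harmonic/Bessel identity, and L'H\^opital's rule, with all constants ($8\pi^2\rho^2$ in $\psi_{l,m,\rho}$ and $16\pi^3\rho_{l,n}^2=4\pi j_{l+\frac{1}{2},n}^2$ after differentiation) matching. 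The only difference is cosmetic: you justify the key identity $\int_{\mathbb{S}^2}e^{-2\pi i\rho\,v\cdot\omega}Y_l^m(v)\,d\sigma(v)=4\pi(-i)^l j_l(2\pi\rho)Y_l^m(\omega)$ directly by Funk--Hecke, whereas the paper derives the analogous identity (with phase $i^{l-2m}$ for the conjugate exponential, which squares to the same $(-1)^l$ and hence yields the identical $\psi_{l,m,\rho}$) by reducing to Lemma~\ref{lem:2d} and citing a Legendre--Bessel integral from the literature; your route is arguably cleaner and manifestly independent of the normalisation of $Y_l^m$.
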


In 3D, thanks to Huygens' principle it is possible to precisely compute the Fourier transform of the data $g_{l,m}$ because it will vanish for sufficiently large times \cite{Evans}. However, the computational problem of taking the derivative needs to be addressed. 
As in the case of the 2D disk, also in this context of a 3D ball the coefficients $\big((f, \tilde{\psi}_{l,m, n})_{L^2(\Omega)}\big)_{l,m,n}$ can be interpreted as generalised Fourier coefficients.

The proof of this result is very similar to that of Theorem~\ref{thm:2Ddisk_free}. For the sake of completeness, it is presented in Appendix~\ref{sec:3Dball|}.

\section{Bounded domain model for PAT}\label{sec:bounded}
Let us now consider the bounded domain model for photoacoustic tomography. In this case, the pressure wave satisfies the following Cauchy problem for the wave equation with Dirichlet boundary conditions
\begin{equation}\label{eq:BDD}
\begin{cases}
\partial_{tt}p-\Delta p = 0 & \text{in }\Omega \times (0,T),\\
p = 0 & \text{on } \partial \Omega\times [0,T], \\
p(\cdot, 0) = f & \text{in } \O, \\
\partial_tp(\cdot, 0)=0 & \text{in } \Omega.
\end{cases}
\end{equation}
The solution may be written explicitly by using the orthonormal basis $\{\varphi_n\}_{n \in \N_+}$ of $L^2(\Omega)$ of Dirichlet eigenfunctions of $-\Delta$:
\begin{equation}\notag
p(x,t) = \sum_{n \in \mathbb{N_+}}(f,\varphi_n)_{L^2(\Omega)} \cos(\lambda_n t) \varphi_n(x),
\end{equation}
where $\{\lambda^2_n\}_n$ are the corresponding eigenvalues.

Recall that, in the bounded-domain model, we measure the normal derivative of the pressure wave on a portion $\Gamma \subseteq \d\O$ of the boundary of the open set. As in the previous section, we do not measure the normal derivative pointwise, but rather its moments against transducers modeled by a family of functions $\{\Phi_\jj\}_{\jj}$. We have, for $t \in [0,T]$,
\begin{equation}\label{eq:objective}
\begin{split}
g_\jj(t) &=  (\partial_\nu p(t), \Phi_\jj)_{L^2(\Gamma)}  \\
& = \sum_{n \in \mathbb{N_+}} (f,\varphi_n)_{L^2(\Omega)} (\partial_\nu \varphi_n, \Phi_\jj)_{L^2(\Gamma)} \cos(\lambda_n t).
\end{split}
\end{equation}
We shall now discuss how to choose the masks $\Phi_\jj$ and how to reconstruct (some of) the generalised Fourier coefficients $(f,\varphi_n)_{L^2(\Omega)}$, depending on the domain $\Omega$.

\subsection{2D Disk}

Let $\Omega = B_{\R^2}(0,1)$ be the unit disk in $\R^2$. We  start from equation \eqref{eq:objective}, which expresses the measurements $g_\jj$ as a series involving the eigenfunctions of the Dirichlet Laplacian. In polar coordinates $x = (\rho\cos\theta,\rho\sin\theta) \in B_{\R^2}(0,1)$, we can write eigenfunctions and eigenvalues as
\begin{equation}\notag
\begin{aligned}
&\varphi_{n,k}(\rho, \theta) = \frac{ \sqrt{2}}{|J_n'(j_{n,k})|} \, J_n(j_{n,k}\rho) \, e^{in\theta},\quad & n \in \Z, k \in \N_+,\\
&\lambda_{n,k} = j_{n,k}, & n \in \Z, k \in \N_+,
\end{aligned}
\end{equation}
where $J_n$ is the Bessel function and $j_{n,k}$ is its $k^\text{th}$ positive zero, as in $\S$\ref{sub:2ddiskfree}. We consider the measure $\frac{d\theta}{2\pi}$ on $\Gamma=\partial\Omega$.
The normal derivatives of these eigenfunctions on the boundary $\d\O$ of the disk are, since $\partial_\nu = \partial_\rho|_{\rho =1}$:
\begin{equation}\notag
\partial_\nu \varphi_{n,k}(1,\theta) =  \frac{ \sqrt{2}}{|J_n'(j_{n,k})|} \, j_{n,k} \, J_n'(j_{n,k})\,e^{in\theta} =: c_{n,k} e^{in\theta}, \qquad \theta \in [0,2\pi].
\end{equation}
If we choose, similarly to the free-space model described in $\S$\ref{sub:2ddiskfree}, the transducers $\Phi_\jj$  as $\Phi_\jj(\theta):=e^{i\jj\theta}$, then
\begin{equation}\notag
(\partial_\nu \varphi_{n,k}, \Phi_\jj)_{L^2(\partial B(0,1))} = c_{n,k} \delta_{n,\jj}, \qquad n,\jj \in \Z, k \in \N_+.
\end{equation}
So, by \eqref{eq:objective}, the measurements $g_\jj$ are, for fixed $\jj \in \N$: 
\begin{equation}\label{eq:gj2ddisk}
g_\jj(t) = \sum_{k \in \mathbb{N}_+}  c_{\jj,k} (f, \varphi_{\jj,k})_{L^2(\Omega)} \cos(j_{\jj,k} t), \qquad t\in [0,T].
\end{equation}
As in the free-space model, we now show that, for fixed $\jj$, all generalised Fourier coefficients $\{(f,\varphi_{\jj,k})_{L^2(\Omega)}\}_{k \in \N_+}$ may be reconstructed provided that $T$ is large enough. The reader is referred to Appendix~\ref{sec:riesz} for a brief review on Riesz sequences, which are the main tool used in this section.
\begin{Proposition}\label{prop:2ddiskfree}
Take $T\ge 1.01$. For every $\jj\in\Z$, the family $\{\cos(j_{\jj,k} \cdot)\}_{k\in\N_+}$ is a Riesz sequence in $L^2([0,T])$ with lower bound independent of $\jj$. More precisely, we have
\begin{equation}\label{eq:rieszdisk}
\sum_{k\in \N_+} |a_k|^2 \le \frac{2\pi\, T^2}{T^2-1.018} \, \Bigl\|\sum_{k\in \N_+} a_k \cos(j_{\jj,k} \cdot)\Bigr\|_{L^2([0,T])}^2,\qquad a\in \ell^2.
\end{equation}
Further, the sequence $(a_k)_k$ may be reconstructed from $g_\jj (t)=\sum_{k\in \N_+} a_k \cos(j_{\jj,k} t)$ as
\begin{equation}\label{eq:rec}
a_k=\bigl(g_\jj,S_\jj^{-1} (\cos(j_{\jj,k} \cdot))\bigr)_{L^2([0,T])},\qquad k\in\N_+,
\end{equation}
where $S_\jj\colon L^2([0,T])\to L^2([0,T])$ is the frame operator defined as
\[
(S_\jj g)(t)=\sum_{k\in \N_+} \bigl(g, \cos(j_{\jj,k} \cdot)\bigr)_{L^2([0,T])} \cos(j_{\jj,k} t).
\]
\end{Proposition}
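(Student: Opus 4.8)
The plan is to reduce the statement to a classical theorem on nonuniform Fourier series, via the known asymptotics of the Bessel zeros $j_{\jj,k}$. Recall that, for fixed $\jj$, the zeros satisfy $j_{\jj,k} = \pi(k + \jj/2 - 1/4) + O(1/k)$ as $k\to\infty$, and more precisely there is a uniform lower bound on the gaps: $j_{\jj,k+1}-j_{\jj,k} \ge \pi$ for all $\jj$ and $k$ (indeed the gap decreases to $\pi$ from above). Writing $\lambda_k = j_{\jj,k}$, the family $\{\cos(\lambda_k\, \cdot)\}_{k}$ is, after passing to exponentials $\{e^{\pm i\lambda_k t}\}$ on a symmetric interval, a candidate for a Riesz sequence by Kadec-type or Ingham-type theorems. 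The cleanest route is Ingham's inequality: if the frequencies are separated by a gap $\gamma = \inf_k(\lambda_{k+1}-\lambda_k)$ and $T > \pi/\gamma$ (in the exponential formulation $T$ larger than $2\pi/\gamma$, depending on normalisation), then $\{e^{i\lambda_k t}\}$ is a Riesz sequence in $L^2$ of the interval, with constants depending only on $\gamma$ and $T$, not on the individual $\lambda_k$. Since here $\gamma \ge \pi$, one gets a threshold $T > 1$, which is why the hypothesis $T \ge 1.01$ (or any $T>1$) appears, and the resulting constants are uniform in $\jj$ because the gap bound $j_{\jj,k+1}-j_{\jj,k}\ge\pi$ is uniform in $\jj$.

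Concretely, the steps I would carry out are as follows. First, record the two facts about Bessel zeros: (i) the interlacing/separation property $j_{\jj,k+1}-j_{\jj,k}\ge\pi$ uniformly in $\jj\ge0$ (and by symmetry of $J_{-\jj}$ up to sign, in all $\jj\in\Z$), and (ii) that this makes $\{\lambda_k\}=\{j_{\jj,k}\}$ a uniformly separated sequence. Second, invoke the version of Ingham's theorem for cosines (or deduce it from the exponential version by symmetrising the interval, noting $\cos(\lambda t)=\tfrac12(e^{i\lambda t}+e^{-i\lambda t})$ and that the frequencies $\{\pm\lambda_k\}$ are still separated when $\lambda_1$ itself is bounded below — here $j_{\jj,1}$ is bounded below, in fact $j_{0,1}\approx 2.405$, so the positive and negative clusters do not collide): this yields the lower Riesz bound with an explicit constant depending only on $T$ and $\gamma=\pi$. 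Tracking the constant through the standard proof of Ingham's inequality gives exactly the factor $\tfrac{2\pi T^2}{T^2 - c}$ with $c$ slightly above $1$ (the $1.018$ is whatever comes out of the explicit Ingham computation with gap $\pi$, possibly after a small slack to absorb lower-order terms). Third, having a Riesz sequence, the frame operator $S_\jj$ restricted to $\overline{\mathrm{span}}\{\cos(j_{\jj,k}\cdot)\}$ is boundedly invertible, so the dual frame $\{S_\jj^{-1}\cos(j_{\jj,k}\cdot)\}$ exists and the reconstruction formula \eqref{eq:rec} is just the standard biorthogonal expansion $a_k = (g_\jj, S_\jj^{-1}\cos(j_{\jj,k}\cdot))$; this part is pure abstract frame theory once the Riesz property is in hand, and I would cite Appendix~\ref{sec:riesz} for it.

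The main obstacle is getting the \emph{explicit} and \emph{$\jj$-uniform} constant, i.e.\ the precise inequality \eqref{eq:rieszdisk} rather than a qualitative statement. Two things have to be pinned down carefully: first, that the gap bound $j_{\jj,k+1}-j_{\jj,k}\ge\pi$ genuinely holds for \emph{every} $\jj$ and $k$ — this follows from classical results on Bessel zeros (e.g.\ the fact that consecutive zeros of $J_\nu$ differ by more than $\pi$ for $\nu\ge\tfrac12$, together with a direct check for the small remaining orders, or from the interlacing of zeros of $J_\nu$ and $J_{\nu+1}$ plus McMahon-type bounds) and should be stated as a cited lemma; and second, that Ingham's constant with gap exactly $\pi$ and $T$ slightly above $1$ produces the clean bound $\tfrac{2\pi T^2}{T^2-1.018}$. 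The $\cos$ versus complex-exponential bookkeeping on a one-sided interval $[0,T]$ also needs a little care — one typically extends to $[-T,T]$ evenly, applies Ingham there, and halves, and it must be checked that the frequency set $\{\pm j_{\jj,k}\}_k$ on $[-T,T]$ is still uniformly separated, which requires $2\,j_{\jj,1} > \pi$, true since $j_{\jj,1}\ge j_{0,1}>2$. Everything else — Fubini, Bessel asymptotics as mere sanity checks, the abstract frame-operator inversion — is routine.
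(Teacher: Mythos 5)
Your overall strategy is the right one and matches the paper's: establish a uniform gap for the Bessel zeros, apply an Ingham-type lower bound for the cosine system on $[0,T]$ (obtained by even reflection to $[-T,T]$, checking that the symmetrised frequencies $\{\pm j_{\jj,k}\}_k$ remain separated, i.e.\ that $2j_{\jj,1}$ is large enough), and then obtain the reconstruction formula from abstract frame theory (Proposition~\ref{prop:rec}). However, there is a genuine error in your key separation claim. It is \emph{not} true that $j_{\jj,k+1}-j_{\jj,k}\ge\pi$ for all $\jj$ and $k$. By Lemma~\ref{lem:zeros} (Elbert), the gap sequence $\{j_{\nu,k+1}-j_{\nu,k}\}_k$ decreases to $\pi$ from above only for $|\nu|>\tfrac12$; for $|\nu|<\tfrac12$ it \emph{increases} to $\pi$, so all gaps lie strictly below $\pi$. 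In particular, for $\jj=0$ one has $\inf_k (j_{0,k+1}-j_{0,k}) = j_{0,2}-j_{0,1}\approx 3.1153 < \pi$. Your argument, which takes $\gamma\ge\pi$ uniformly, therefore fails precisely in the case $\jj=0$, and the constants it would produce (threshold $T>1$ and bound $\tfrac{2\pi T^2}{T^2-1}$) are not justified.

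This is not mere bookkeeping: the hypothesis $T\ge 1.01$ (rather than $T>1$) and the constant $1.018$ (rather than $1$) in \eqref{eq:rieszdisk} are forced by the $\jj=0$ case. The paper's proof sets $\gamma=\min\bigl(\inf_k(\lambda_{k+1}-\lambda_k),\,2\lambda_1\bigr)$ and shows $\gamma\ge j_{0,2}-j_{0,1}\ge 3.115$ uniformly in $\jj$ (using $2j_{\jj,1}\ge 2j_{0,1}\ge 4.8$ for the symmetrised frequencies), so that the Ingham condition $\gamma>\pi/T$ holds as soon as $T\ge 1.01>\pi/3.115$, and the lower Riesz bound becomes $\frac{1}{2\pi}\bigl(1-(\tfrac{\pi}{T\gamma})^2\bigr)\ge\frac{1}{2\pi}\bigl(1-\tfrac{1.018}{T^2}\bigr)$ since $(\pi/3.115)^2\le 1.018$. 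So the $1.018$ is not slack absorbed into a gap-$\pi$ Ingham computation; it is exactly the contribution of the $\jj=0$ gap. To repair your proof, treat $\jj=0$ separately with $\gamma=j_{0,2}-j_{0,1}$ and carry $3.115$ through the estimate; the rest of your argument (the lower bound on $2j_{\jj,1}$ and the frame-operator inversion) is correct as written.
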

We will prove this result below. Let us now see how to apply it to our problem. By applying the reconstruction formula \eqref{eq:rec} to \eqref{eq:gj2ddisk} we obtain
\begin{equation*}
(f,\varphi_{\jj,k})_{L^2(\Omega)} = \frac{\bigl(g_\jj,S_\jj^{-1} (\cos(j_{\jj,k} \cdot))\bigr)_{L^2([0,T])}}{c_{\jj,k}},\qquad k\in\N_+.
\end{equation*}
Furthermore, by \eqref{eq:rieszdisk}, the reconstruction is stable, uniformly in $\jj$, in the sense that
\[
\sum_{k\in\N_+} |(f,\varphi_{\jj,k})_{L^2(\Omega)}|^2\le \frac{0.55\, T^2}{T^2-1.018}\, \|g_\jj\|_{L^2([0,T])}^2,
\]
since $|c_{\jj,k}|^2=|\sqrt{2} \, j_{\jj,k}|^2\ge 2j_{0,1}^2/\ge \frac{2\pi}{0.55}$.

The proof of Proposition~\ref{prop:2ddiskfree} is based on the behaviour of the zeros of Bessel functions.

\begin{Lemma}[\cite{Elbert1986}]\label{lem:zeros}
Let $\nu \in \R$. The sequence $\{j_{\nu,k+1}-j_{\nu,k}\}_{k \in \N_+}$ is strictly decreasing for $|\nu|> \frac{1}{2}$, constant for $|\nu|=\frac{1}{2}$ and increasing for $|\nu| < \frac{1}{2}$. In all the three cases, the sequence converges to $\pi$ as $k \to +\infty$.
\end{Lemma}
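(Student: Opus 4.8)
\emph{Plan.} This is a classical spacing result for Bessel zeros, and the natural strategy is to pass to the Liouville normal form of Bessel's equation and then read off the monotonicity of the gaps from the monotonicity of the resulting potential, via a Sturm comparison argument. Setting $u(x):=\sqrt{x}\,J_\nu(x)$ removes the first-order term and yields
\[
u''(x)+q(x)\,u(x)=0,\qquad q(x)=1-\frac{\nu^2-\tfrac14}{x^2},\qquad x>0,
\]
and for $x>0$ the zeros of $u$ coincide with the positive zeros $\{j_{\nu,k}\}_{k\in\N_+}$ of $J_\nu$. Since $q'(x)=2(\nu^2-\tfrac14)\,x^{-3}$, the sign of $\nu^2-\tfrac14$ governs the three regimes: $q$ is strictly increasing if $|\nu|>\tfrac12$, identically $1$ if $|\nu|=\tfrac12$, and strictly decreasing if $|\nu|<\tfrac12$. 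The boundary case $|\nu|=\tfrac12$ is immediate, because $J_{1/2}$ and $J_{-1/2}$ are explicit constant multiples of $x^{-1/2}\sin x$ and $x^{-1/2}\cos x$; their positive zeros are equally spaced with gap $\pi$, which settles both the ``constant'' assertion and the limit $\pi$ in this case.

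\emph{Monotonicity (the crux).} Fix $k\in\N_+$ and put $h:=j_{\nu,k+1}-j_{\nu,k}>0$. The shifted function $w(x):=u(x+h)$ solves $w''+q(\cdot+h)\,w=0$, vanishes at $j_{\nu,k}$ (as $u(j_{\nu,k}+h)=u(j_{\nu,k+1})=0$), and its first zero to the right of $j_{\nu,k}$ is $j_{\nu,k+2}-h$. On the arch $(j_{\nu,k},j_{\nu,k+1})$, where $u$ is of one sign, I would compare $u$ and $w$ through the Wronskian $W:=u\,w'-u'\,w$, which satisfies $W'=(q-q(\cdot+h))\,u\,w$ and $W(j_{\nu,k})=0$. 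When $|\nu|>\tfrac12$ we have $q(\cdot+h)>q$ on this interval, so the sign of $W'$, evaluated against the signs of $u'$ at the two endpoints, forces $w$ to vanish strictly inside $(j_{\nu,k},j_{\nu,k+1})$ (this is exactly Sturm's comparison theorem, whose proof is this sign analysis of $W$). Hence $j_{\nu,k+2}-h<j_{\nu,k+1}$, i.e.\ the gaps strictly decrease. For $|\nu|<\tfrac12$ the inequality $q(\cdot+h)<q$ is reversed, $w$ has no zero in $(j_{\nu,k},j_{\nu,k+1})$, so $j_{\nu,k+2}-h>j_{\nu,k+1}$ and the gaps strictly increase. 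This shifting comparison is the genuine content of the lemma; the only delicate points are securing the \emph{strict} inequality and the correct sign bookkeeping of $u$ and $w$ on the arch, and I note that no positivity of $q$ is required, since the Wronskian argument uses only that $j_{\nu,k}$ and $j_{\nu,k+1}$ are consecutive zeros of $u$.

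\emph{Convergence to $\pi$.} For the limit I would invoke McMahon's asymptotic expansion $j_{\nu,k}=(k+\tfrac{\nu}{2}-\tfrac14)\pi+O(1/k)$ (see, e.g., \cite{temme-1996}), from which $j_{\nu,k+1}-j_{\nu,k}\to\pi$ directly in all three regimes. Alternatively, and consistently with the monotonicity just proved, since $q(x)\to1$ as $x\to\infty$ one can sandwich the gaps by comparison with the constant-potential equations $v''+(1\pm\eps)v=0$, whose zero spacings are $\pi/\sqrt{1\pm\eps}$, and let $\eps\to0$ to pin the common limit at $\pi$. Together with the monotonicity this also shows that for $|\nu|>\tfrac12$ the gaps decrease to $\pi$ from above while for $|\nu|<\tfrac12$ they increase to $\pi$ from below, matching the statement. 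The overall plan thus reduces to a single nontrivial ingredient, the shifting Sturm comparison, and I expect that step, specifically the strictness and the sign bookkeeping on each arch, to be where the real work lies; everything else is either explicit (the case $|\nu|=\tfrac12$) or a standard asymptotic fact.
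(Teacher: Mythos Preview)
The paper does not give a proof of this lemma; it is simply quoted from \cite{Elbert1986} and used as a black box. So there is nothing in the paper to compare against, and your proposal stands on its own.

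Your argument is the classical one and is correct. Passing to the Liouville normal form $u=\sqrt{x}\,J_\nu$ with potential $q(x)=1-(\nu^2-\tfrac14)x^{-2}$ identifies the zeros of $J_\nu$ with those of $u$ on $(0,\infty)$, and the sign of $\nu^2-\tfrac14$ governs the monotonicity of $q$. The shifting Sturm comparison you outline is exactly the right tool: with $h=j_{\nu,k+1}-j_{\nu,k}$ and $w(x)=u(x+h)$, both $u$ and $w$ vanish at $j_{\nu,k}$, and the strict inequality $q(\cdot+h)\gtrless q$ forces the next zero of $w$ to precede (resp.\ follow) $j_{\nu,k+1}$ strictly, giving $j_{\nu,k+2}-j_{\nu,k+1}\lessgtr j_{\nu,k+1}-j_{\nu,k}$. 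The case $|\nu|=\tfrac12$ is explicit, and the limit $\pi$ follows either from McMahon's expansion or, as you note, from a second Sturm comparison with the constant-coefficient equations $v''+(1\pm\eps)v=0$. The only places to be careful, and you flag them yourself, are (i) the version of Sturm's theorem you invoke is the ``same initial zero'' variant, which gives a strict ordering of the \emph{next} zero under a strict potential inequality, and (ii) for $|\nu|<\tfrac12$ the correct conclusion is that $w$ cannot vanish in $(j_{\nu,k},j_{\nu,k+1}]$, since otherwise Sturm applied the other way around would force an interior zero of $u$. Both points are standard and your handling is sound.
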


We are now ready to prove Proposition~\ref{prop:2ddiskfree}.

\begin{proof}[Proof of Proposition~\ref{prop:2ddiskfree}]
Take $\jj\in\Z$. We want to apply Corollary~\ref{cor:cos}, part 2, with $\lambda_k = j_{\jj,k}$ for $k\ge 1$. We estimate the quantity
\begin{equation}\notag
\gamma:=\min\bigl(\,\inf_{k \in \mathbb{N_+}}(\lambda_{k+1}-\lambda_k),2\lambda_1\bigr) ,
\end{equation}
by using Lemma~\ref{lem:zeros} with $\nu=\jj$:
 if $\jj = 0$, then 
\begin{equation}\notag
\inf_{k \in \mathbb{N}_+}(\lambda_{k+1}-\lambda_{k}) = j_{0,2}-j_{0,1} \geq 3.115,\qquad 2\lambda_1=2j_{0,1}\ge 4.8;
\end{equation}
if $\jj \neq 0$, then
\begin{equation}\notag
\inf_{k \in \mathbb{N}_+}(\lambda_{k+1}-\lambda_{k}) = \pi, \qquad
2\lambda_1=2j_{\jj,1}\ge 2j_{0,1}\ge 4.8.
\end{equation}
In both cases, we have $\gamma\ge 3.115$. Thus, since $T\ge 1.01$, we have $\gamma>\pi/T$, and we can apply Corollary~\ref{cor:cos}, part 2.
We have that $\{\cos(\lambda_k \cdot)\}_{k\in\N_+}$ is a Riesz sequence in $L^2([0, T])$ with lower bound 
\begin{equation}\notag
A = \frac{1}{2\pi}\Big(1-\Big(\frac{\pi}{T\gamma}\Big)^2\Big) \ge \frac{1}{2\pi}\Big(1-\frac{1.018}{T^2}\Big).
\end{equation}
This proves \eqref{eq:rieszdisk}. The remaining parts of the result immediately follow from Proposition~\ref{prop:rec}.
\end{proof}

\subsection{3D Ball}

The case of the 3D ball is very similar to the case of the 2D disk. Let $\Omega = B_{\R^3}(0,1)$ be the unit ball in $\R^3$ and $\Gamma=\partial\Omega=\mathbb{S}^2$. In spherical coordinates $x =(\rho \sin\theta\cos\phi,\rho \sin\theta\sin\phi, \rho\cos\theta)\in B_{\R^3}(0,1)$ ($\rho \in [0,1], \theta \in [0,\pi], \phi \in [0,2\pi]$), the  eigenfunctions and eigenvalues of the Dirichlet Laplacian are
\begin{equation}\notag
\begin{aligned}
& \phi_{n,k,l}(\rho, \theta, \phi) = \frac{\sqrt{2}}{|j'(j_{n+\frac{1}{2},k})|} j_n(j_{n+\frac{1}{2},k} \, \rho) \, Y_{n}^l(\theta, \varphi), & & \quad n \in \N, k \in \N_+, l \in \{-n,...,n\},\\
& \lambda_{n,k}^2 = j_{n+\frac{1}{2},k}^2, & & \quad n \in \N, k \in \N_+,
\end{aligned}
\end{equation}
where $j_n(x) = \sqrt{\frac{\pi}{2x}} J_{n+\frac{1}{2}}(x)$ is the $n^\text{th}$ spherical Bessel function and $Y_{n}^l$ is the spherical harmonic of degree $n$ and order $l$.

The normal derivatives of these eigenfunctions on the boundary $\d\O$ are, since $\partial_\nu = \partial_\rho|_{\rho =1}$:
\begin{equation}\notag
\partial_\nu \varphi_{n,k,l}(1,\theta,\varphi) = \sqrt{2}j_{n+\frac{1}{2},k} \frac{j_n'(j_{n+\frac{1}{2},k})}{|j'(j_{n+\frac{1}{2},k})|}  \, Y_{n}^l(\theta, \varphi) =: c_{n,k} Y_{n}^l(\theta, \varphi) .
\end{equation}
In this case, we choose the transducers to be modeled by spherical harmonics, thus parametrized by two indices. Explicitly, we define $\Phi_{m,p}:= Y_{m}^p \in L^2(\partial B(0,1)) = L^2(\mathbb{S}^2)$ for fixed $m \in \N, p \in \{-m,...,m\}$. Thus
\begin{equation}\notag
(\partial_\nu \varphi_{n,k,l}, \Phi_{m,p})_{L^2(\partial B(0,1))} = c_{n,k}\, \delta_{n,m} \,\delta_{l,p}, \qquad n\in\N, k \in \N_+, l \in \{-n,...,n\}.
\end{equation}
In view of \eqref{eq:objective}, for $m \in \N, p \in \{-m,...,m\}$ the measurements are
\begin{equation*}
g_{m,p}(t) = \sum_{k \in \mathbb{N}_+}  c_{m,k} (f, \varphi_{m,k,p})_{L^2(\Omega)} \cos(j_{m+\frac12,k}t),\qquad t\in [0,T],
\end{equation*}
which is completely analogous to \eqref{eq:gj2ddisk}.

We can apply again Lemma~\ref{lem:zeros} and Corollary~\ref{cor:cos}, part 2, with $\lambda_k = j_{m+\frac12,k}$ for $k\ge 1$ and obtain that $\{\cos(j_{m+\frac12,k}\cdot)\}_{k \in \N_+}$ is a Riesz sequence in $L^2([0,T])$ for every $m \in \N$ provided that $T>1$. We also obtain the bound
\[
\sum_{k\in\N_+} |(f, \varphi_{m,k,p})_{L^2(\Omega)}|^2
\le\frac{1}{ 2\pi^2} \sum_{k\in\N_+} |c_{m,k}(f, \varphi_{m,k,p})_{L^2(\Omega)}|^2
\le\frac{1}{\pi}\frac{T^2}{T^2-1} \|g_{m,p}\|^2_{L^2([0,T])},
\]
where we have used also that $|c_{m,k}| = \sqrt{2} j_{m+\frac{1}{2},k} \geq  \sqrt{2} j_{\frac{1}{2},1} =  \sqrt{2} \pi$. This shows that the measurement $g_{m,p}$ allows for the  recovery of the generalised Fourier coefficients $\bigl((f, \varphi_{m,k,p})_{L^2(\Omega)}\bigr)_{k\in\N_+}$. Further, the reconstruction is stable, uniformly in $m$ and $p$.

\subsection{2D Square: reconstruction from one side}\label{sub:2Doneside}
The case where $\Omega$ is a square was examined in detail in \cite{Kunyansky2013}. There, the authors presented a reconstruction algorithm that \emph{approximates} the generalised Fourier coefficients of $f$ by considering a windowed Fourier transform of the measurements $g_{\jj}$ with respect to some chosen smooth cutoff functions. Our approach is different, since it relies on the theory of Riesz bases and does not require any arbitrary choice of auxiliar functions. Moreover, it provides an \emph{exact} reconstruction scheme of the generalised Fourier coefficients of $f$. 

Let $d=2$, $\Omega = [0,1]^2 \subseteq \mathbb{R}^2$ be the unit square in $\R^2$. The eigenfunctions of the Dirichlet Laplacian on $\O$ are
\begin{equation}\notag
\begin{aligned}
&\varphi_{n,k}(x_1, x_2) = 2 \sin(n\pi x_1) \sin(k\pi x_2), & \qquad n,k \in \N_+,\\
&\lambda_{n,k}= \pi \sqrt{n^2+k^2}, & n,k \in \N_+.
\end{aligned}
\end{equation}
Suppose we make measurements on only one side of the square. Without loss of generality, assume we measure on the vertical side $\Gamma = \{1\}\times [0,1]$. On $\Gamma$, the normal derivative of the eigenvectors is
\begin{equation*}
\partial_\nu \varphi_{n,k}(1,x_2) = \partial_{x_1} \varphi_{n,k}(1,x_2) = (-1)^n 2n\pi \sin(k\pi x_2),\qquad x_2\in[0,1].
\end{equation*}
We choose detectors $\Phi_\jj\in L^2(\Gamma)$, $\jj\in\N_+$, of the form 
\begin{equation}\label{eq:phi_square}
\Phi_{\jj}(x_2):= \sqrt{2}\sin(\jj\pi x_2) ,\qquad x_2\in [0,1],
\end{equation}
so that
\begin{equation*}
 (\partial_\nu \varphi_{n,k}, \Phi_{\jj})_{L^2(\Gamma)} =
 (-1)^n \sqrt{2}n\pi \, \delta_{k,\jj} .
\end{equation*}
Therefore, for $\jj \in \N_+$ the measurements given in \eqref{eq:objective} are of the form:
\begin{equation}\label{eq:meas-square}
\begin{split}
g_\jj(t) & =\sum_{n,k \in \mathbb{N_+}} (f,\varphi_{n,k})_{L^2(\Omega)} (\partial_\nu \varphi_{n,k}, \Phi_\jj)_{L^2(\Gamma)} \cos(\lambda_{n,k} t)\\
& =\sum_{n \in \mathbb{N_+}} (-1)^n \sqrt{2}\pi n (f,\varphi_{n,\jj})_{L^2(\Omega)}  \cos(\lambda_{n,\jj} t).
\end{split}
\end{equation}
For each $\jj\in\N_+$, the reconstruction of $\bigl((f,\varphi_{n,\jj})_{L^2(\Omega)}\bigr)_n$ from the knowledge of $g_\jj$ on $[0,T]$ may be performed as follows.

\begin{Proposition}
Take $T\ge 1$ and $\jj\in\N_+$. Then $\{\cos(\lambda_{n,\jj} \cdot)\}_{n \in \N_+}$ is a Riesz sequence in $L^2([0,T])$. In particular, there exists $C_\jj^T > 0$ such that
\begin{equation}\label{eq:rieszsquare}
\sum_{n\in \N_+} |a_n|^2 \le C_\jj^T  \Bigl\|\sum_{n\in \N_+} a_n \cos(\lambda_{n,\jj} \cdot)\Bigr\|_{L^2([0,T])}^2,\qquad a\in \ell^2.
\end{equation}
Further, the sequence $(a_n)_n$ may be reconstructed from $g_\jj (t)=\sum_{n\in \N_+} a_n \cos(\lambda_{n,\jj} t)$ as
\begin{equation}\label{eq:recsquare}
a_n=\bigl(g_\jj,S_\jj^{-1} (\cos(\lambda_{n,\jj} \cdot))\bigr)_{L^2([0,T])},\qquad n\in\N_+,
\end{equation}
where $S_\jj\colon L^2([0,T])\to L^2([0,T])$ is the frame operator.
\end{Proposition}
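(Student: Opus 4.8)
The plan is to follow the same two-stage scheme used for the 2D disk (Proposition~\ref{prop:2ddiskfree}) and the 3D ball. First I would reduce the statement to the Riesz-sequence claim: once we know that $\{\cos(\lambda_{n,\jj}\cdot)\}_{n\in\N_+}$, with $\lambda_{n,\jj}=\pi\sqrt{n^2+\jj^2}$, is a Riesz sequence in $L^2([0,T])$, the reconstruction formula \eqref{eq:recsquare} and the stability bound \eqref{eq:rieszsquare} are immediate from Proposition~\ref{prop:rec}: the frame operator $S_\jj$ is boundedly invertible on the closed linear span of the $\cos(\lambda_{n,\jj}\cdot)$, and the coefficients of $g_\jj$ in that system are recovered by pairing with the canonical dual, $a_n=(g_\jj,S_\jj^{-1}\cos(\lambda_{n,\jj}\cdot))_{L^2([0,T])}$. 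So I would record this reduction and then concentrate entirely on the Riesz-sequence property, exactly as in the proof of Proposition~\ref{prop:2ddiskfree}.

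For the Riesz-sequence property I would analyse the frequencies. Writing $\lambda_{n,\jj}=\pi n+r_{n,\jj}$ with $r_{n,\jj}=\pi(\sqrt{n^2+\jj^2}-n)=\pi\jj^2/(\sqrt{n^2+\jj^2}+n)$, for each fixed $\jj$ the remainder $r_{n,\jj}$ is positive, strictly decreasing in $n$, bounded by $r_{1,\jj}=\pi(\sqrt{1+\jj^2}-1)$, and of order $\pi\jj^2/(2n)\to 0$. By convexity of $x\mapsto\sqrt{x^2+\jj^2}$ the consecutive gaps $\lambda_{n+1,\jj}-\lambda_{n,\jj}$ increase to $\pi$, so $\{\lambda_{n,\jj}\}_n$ is separated with separation constant $\lambda_{2,\jj}-\lambda_{1,\jj}>0$ and is a vanishing perturbation of the lattice $\pi\N_+$, for which $\{\cos(\pi n\cdot)\}_{n\in\N_+}$ is orthogonal in $L^2([0,1])$. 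At this point the crude gap bound (Corollary~\ref{cor:cos}, part~2) that sufficed for the disk and ball is no longer enough: the gaps stay strictly below $\pi$, hence never exceed $\pi/T$ when $T=1$, and the $T$ required by Corollary~\ref{cor:cos} would grow with $\jj$. Instead I would invoke the finer perturbation theory for Riesz bases of exponentials in Appendix~\ref{sec:riesz} --- an Avdonin--Kadec ``in the mean'' type statement --- verifying for fixed $\jj$ the separation, the finite bound $\sup_n r_{n,\jj}\le r_{1,\jj}<\infty$, and, for $N=N_\jj$ large enough, that the Cesàro averages $\frac1N\sum_{n=m+1}^{m+N}r_{n,\jj}$ stay below the critical threshold uniformly in $m\ge 0$ (the worst window is $m=0$, where the average is $O(\jj^2\log N/N)\to0$). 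This yields the Riesz-sequence property in $L^2([0,T])$ for all $T\ge 1$ (one may reduce to $T=1$, since a separated exponential system that is a Riesz sequence on $[0,1]$ remains one on any larger interval), with a constant $C_\jj^T$ that inevitably depends on $\jj$ through $r_{1,\jj}$, through $\lambda_{2,\jj}-\lambda_{1,\jj}$, and through $N_\jj$.

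To finish I would substitute the measured signal \eqref{eq:meas-square} into \eqref{eq:recsquare} and divide by the explicit nonzero factor $(-1)^n\sqrt2\,\pi n$ to obtain $(f,\varphi_{n,\jj})_{L^2(\Omega)}$ for every $n$, the stability estimate following from \eqref{eq:rieszsquare} together with $|(-1)^n\sqrt2\,\pi n|\ge\sqrt2\,\pi$. The main obstacle --- and the real content of the proposition --- is the critical time $T=1$: for the disk and ball there is a genuine spectral gap bounded away from zero, so an Ingham-type inequality applies with explicit constants, whereas here the frequency gaps converge to $\pi$ from below and the problem sits exactly at the critical Beurling density $1/\pi=T/\pi$. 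The Riesz-sequence property must therefore be extracted from the more delicate mean-oscillation control of $\lambda_{n,\jj}-\pi n$, and uniformity in $\jj$ is genuinely lost, which is why the bound is stated as $C_\jj^T$ rather than $C^T$. A minor technical point: $\{\cos(\lambda_{n,\jj}\cdot)\}_{n\in\N_+}$ omits the zero-frequency term, so when invoking a ``Riesz basis'' form of the perturbation theorem it is cleanest to use directly its Riesz-sequence (interpolation) version, or first adjoin $\cos(\pi\jj\cdot)$ and then discard it, which only affects the constants.
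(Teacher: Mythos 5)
Your reduction to the Riesz-sequence claim via Proposition~\ref{prop:rec}, and your diagnosis of why Corollary~\ref{cor:cos} fails here (the gaps $\lambda_{n+1,\jj}-\lambda_{n,\jj}$ increase to $\pi$ from below, so the system sits at the critical density for $T=1$), match the paper exactly and are correct. Where you diverge is in the key step. The paper does not use an Avdonin--Kadec ``in the mean'' criterion --- no such statement appears in Appendix~\ref{sec:riesz}; what the appendix actually provides is Theorem~\ref{lem:cos}, a quadratic-closeness perturbation result: if $\sum_n(\lambda_n-\mu_n)^2<\infty$, then $\{\cos(\lambda_n\cdot)\}_n$ is a Riesz basis of $L^2([0,1])$ iff $\{\cos(\mu_n\cdot)\}_n$ is. Your own computation already hands you the hypothesis: from $r_{n,\jj}=\pi\jj^2/(\sqrt{n^2+\jj^2}+n)=O(\jj^2/n)$ one gets $\sum_n r_{n,\jj}^2<\infty$, so comparing with the orthogonal system $\{\cos(\pi n\cdot)\}_{n\in\N}$ (adjoining $\lambda_{0,\jj}:=0$) gives a Riesz basis of $L^2([0,1])$ in one line, hence a Riesz sequence after dropping the $n=0$ term, hence a Riesz sequence in $L^2([0,T])$ for $T\ge 1$. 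Your Ces\`aro-average route is a legitimate alternative --- the block averages $\frac1N\sum r_{n,\jj}=O(\jj^2\log N/N)$ do eventually fall below the Avdonin threshold, and separation plus boundedness of $r_{n,\jj}$ hold --- but it imports a substantially deeper theorem than needed and would have to be stated and normalised carefully (the relevant threshold in this scaling, the passage from exponentials on $[-1,1]$ to cosines on $[0,1]$, and the indexing over $\N_+$ rather than $\Z$). What the quadratic-closeness route buys is brevity and self-containedness; what it gives up, in common with your route, is any quantitative control of $C_\jj^T$, which is why the paper follows this proposition with a separate quantitative statement valid only for $T$ growing with $\bar\jj$. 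Your concluding remarks on substituting \eqref{eq:meas-square} and on the non-uniformity in $\jj$ are consistent with the paper's discussion.
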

\begin{proof}
We apply Theorem \ref{lem:cos} to the sequences $\{\pi n\}_{n \in \N}$ and $\{\lambda_{n,\jj}\}_{n \in \N}$, setting $\lambda_{0,\jj}:=0$. Note that $\{\cos(\pi n \cdot)\}_{n \in \N}$ is a Riesz basis in $L^2([0,1])$. Furthermore,
\begin{equation*}
\sum_{n \in \N} (\lambda_{n,\jj} - \pi n)^2  = \pi^2 \sum_{n \in \N} (\sqrt{n^2+\jj^2} - n)^2  = \pi ^2 \sum_{n \in \N} \Big(\frac{\jj^2}{\sqrt{n^2+\jj^2} + n}\Big)^2 < +\infty,
\end{equation*}
Thus, the family $\{\cos(\lambda_{n,\jj} \cdot)\}_{n \in \N}$ is a Riesz basis in $L^2([0,1])$. In particular, $\{\cos(\lambda_{n,\jj} \cdot)\}_{n \in \N_+}$ is a Riesz sequence in $L^2([0,1])$, and so also in $L^2([0,T])$ since $T\ge 1$. Hence,  \eqref{eq:rieszsquare} follows. The reconstruction formula \eqref{eq:recsquare} is a consequence of Proposition~\ref{prop:rec}.
\end{proof}

As in the other cases, we can apply this result to the measurements $g_\jj$ given in \eqref{eq:meas-square} provided that $T\ge 1$. More precisely, from $g_\jj$ we first reconstruct the sequence $\bigl((-1)^n\sqrt{2}\pi n (f,\varphi_{n,\jj})_{L^2(\Omega)}\bigr)_n$ and then the sequence of Fourier coefficients $\bigl( (f,\varphi_{n,\jj})_{L^2(\Omega)}\bigr)_n$. Further, for $\jj$ fixed, the reconstruction is stable:
\[
\sum_{n\in\N_+} |(f,\varphi_{n,\jj})_{L^2(\Omega)}|^2\le 
\frac{1}{2\pi^2}\sum_{n\in\N_+} |\sqrt{2}\pi n (f,\varphi_{n,\jj})_{L^2(\Omega)}|^2\le 
\frac{C_\jj^T}{2\pi^2} \|g_\jj\|_{L^2([0,T])}^2,
\]

The main drawback of this approach is the absence of quantitative and uniform estimates on $C_\jj^T$. In fact, the stability deteriorates as $l$ grows, as shown in the following lemma.
\begin{Lemma}
Take $T\ge1$. Then
\[
\lim_{\jj\to +\infty} C^T_\jj=+\infty.
\]
\end{Lemma}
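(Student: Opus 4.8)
The plan is to bound $C^T_\jj$ from below by testing \eqref{eq:rieszsquare} on a single near-null combination of two of the cosines, and then to observe that the two lowest frequencies $\lambda_{1,\jj}$ and $\lambda_{2,\jj}$ approach each other as $\jj\to\infty$. Recall first that the sharp constant in \eqref{eq:rieszsquare} is $C^T_\jj=1/A_\jj$, where $A_\jj>0$ is the optimal lower Riesz bound of $\{\cos(\lambda_{n,\jj}\cdot)\}_{n\in\N_+}$ in $L^2([0,T])$; equivalently, for every nonzero $a\in\ell^2$,
\[
C^T_\jj\ \ge\ \frac{\sum_{n\in\N_+}|a_n|^2}{\bigl\|\sum_{n\in\N_+} a_n\cos(\lambda_{n,\jj}\cdot)\bigr\|_{L^2([0,T])}^2}.
\]

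Next I would take $a_1=1$, $a_2=-1$ and $a_n=0$ for $n\ge 3$, so that the numerator equals $2$ and the denominator is $\|\cos(\lambda_{1,\jj}\cdot)-\cos(\lambda_{2,\jj}\cdot)\|_{L^2([0,T])}^2$. Using the identity $\cos\alpha-\cos\beta=-2\sin\tfrac{\alpha+\beta}{2}\sin\tfrac{\alpha-\beta}{2}$ together with $|\sin x|\le|x|$, one gets $|\cos(\lambda_{1,\jj}t)-\cos(\lambda_{2,\jj}t)|\le(\lambda_{2,\jj}-\lambda_{1,\jj})\,t$ for $t\in[0,T]$, hence
\[
\bigl\|\cos(\lambda_{1,\jj}\cdot)-\cos(\lambda_{2,\jj}\cdot)\bigr\|_{L^2([0,T])}^2\ \le\ \frac{T^3}{3}\,(\lambda_{2,\jj}-\lambda_{1,\jj})^2 .
\]
Plugging this into the previous inequality yields $C^T_\jj\ge \frac{6}{T^3(\lambda_{2,\jj}-\lambda_{1,\jj})^2}$. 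It then remains to note that
\[
\lambda_{2,\jj}-\lambda_{1,\jj}=\pi\bigl(\sqrt{\jj^2+4}-\sqrt{\jj^2+1}\bigr)=\frac{3\pi}{\sqrt{\jj^2+4}+\sqrt{\jj^2+1}}\ \longrightarrow\ 0\quad\text{as }\jj\to+\infty ,
\]
so that $C^T_\jj\to+\infty$, as claimed.

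There is no real obstacle in this argument; the one point that must be made explicit is that $C^T_\jj$ has to be read as the sharp constant in \eqref{eq:rieszsquare} (equivalently, the reciprocal of the optimal lower Riesz bound), since otherwise the statement is vacuous. The mechanism behind the blow-up is the clustering of the frequencies $\lambda_{n,\jj}=\pi\sqrt{n^2+\jj^2}$ near the bottom of the spectrum as $\jj\to\infty$, which forces the cosines $\{\cos(\lambda_{n,\jj}\cdot)\}_n$ to become ever closer to linearly dependent on the fixed interval $[0,T]$; any consecutive pair $\lambda_{m,\jj},\lambda_{m+1,\jj}$ would do, and the lowest pair is simply the most economical choice. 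Equivalently, one could phrase the computation in terms of the $2\times 2$ Gram matrix of $\cos(\lambda_{1,\jj}\cdot)$ and $\cos(\lambda_{2,\jj}\cdot)$ in $L^2([0,T])$, whose smallest eigenvalue tends to $0$.
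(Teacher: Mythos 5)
Your proof is correct, and it takes a genuinely different route from the paper's. The paper argues by contradiction: a uniform bound $C$ on the constants $C^T_\jj$ would, after summing over $\jj$ and using that $\{\varphi_{n,k}\}_{n,k}$ and $\{\Phi_\jj\}_\jj$ are orthonormal bases of $L^2(\Omega)$ and $L^2(\Gamma)$ respectively, yield the observability inequality $\|f\|_{L^2(\Omega)}^2\le \tfrac{C}{2\pi^2}\|\partial_\nu p\|^2_{L^2(\Gamma\times[0,T])}$ for the wave equation observed from a single side of the square, contradicting the failure of the geometric control condition for that $\Gamma$. You instead test the (sharp) lower Riesz bound directly on $a=(1,-1,0,\dots)$ and exploit the clustering $\lambda_{2,\jj}-\lambda_{1,\jj}=3\pi/(\sqrt{\jj^2+4}+\sqrt{\jj^2+1})\to 0$; your estimate $\|\cos(\lambda_{1,\jj}\cdot)-\cos(\lambda_{2,\jj}\cdot)\|^2_{L^2([0,T])}\le \tfrac{T^3}{3}(\lambda_{2,\jj}-\lambda_{1,\jj})^2$ is correct and gives $C^T_\jj\ge 6/\bigl(T^3(\lambda_{2,\jj}-\lambda_{1,\jj})^2\bigr)\gtrsim \jj^2/T^3$. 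What each buys: the paper's route places the lemma in its control-theoretic context and identifies the deeper reason stability fails from one side, but it invokes the Bardos--Lebeau--Rauch theory and, read literally, only shows that the constants are not uniformly bounded, i.e.\ $\sup_\jj C^T_\jj=+\infty$, rather than the stated limit; your two-frequency test is elementary, self-contained, quantitative, and actually proves $\lim_\jj C^T_\jj=+\infty$ with an explicit quadratic rate. Your observation that $C^T_\jj$ must be read as the optimal constant in \eqref{eq:rieszsquare} is the right reading, and the paper's own argument requires it as well.
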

\begin{proof}
Intuitively, this follows from the fact that $\lambda_{2,\jj}-\lambda_{1,\jj}\to 0$ as $\jj\to+\infty$, and so it becomes harder and harder to distinguish the coefficients of the corresponding factors in the sum \eqref{eq:meas-square} as $l$ grows. More formally, if the constants $C^T_\jj$ were uniformly bounded in $\jj$ by a constant $C$, by using that $\{\varphi_{n,k}\}_{n,k\in\N_+}$ is an orthonormal basis of $L^2(\Omega)$, we would obtain
\[
\|f\|_{L^2(\Omega)}^2=\sum_{n,\jj\in\N_+} |(f,\varphi_{n,\jj})_{L^2(\Omega)}|^2 \le
\frac{C}{2\pi^2}\sum_{\jj\in\N_+}  \|g_\jj\|_{L^2([0,T])}^2
=\frac{C}{2\pi^2}\sum_{\jj\in\N_+}  \|(\partial_\nu p, \Phi_\jj)_{L^2(\Gamma)}\|_{L^2([0,T])}^2,
\]
so that, by using that  $\{\Phi_\jj\}_{\jj\in\N_+}$ is an orthonormal basis of $L^2(\Gamma)$,
\[
\frac{2\pi^2}C \|f\|_{L^2(\Omega)}^2\le 
\int_0^T\sum_{\jj\in\N_+}  
 |(\partial_\nu p(\cdot,t) , \Phi_\jj)_{L^2(\Gamma)}|^2\,dt
 =\int_0^T \|\partial_\nu p(\cdot,t) \|_{L^2(\Gamma)}^2\,dt =\|\partial_\nu p\|^2_{L^2(\Gamma\times[0,T])}.
\]
However, $\Gamma$ does not satisfy the geometric control condition \cite{1992-bardos-etal}, and the inverse problem is not Lipschitz stable in this case (cfr.\ $\S$\ref{subsub:bounded}).
\end{proof}

In other words, given a measuring time $T\ge 1$ and a threshold $\bar \jj\in\N_+$, the reconstruction of $\bigl( (f,\varphi_{n,\jj})_{L^2(\Omega)}\bigr)_{n\in\N_+,1\le \jj\le \bar \jj}$ is stable, but stability deteriorates as $\bar \jj\to+\infty$. More quantitatively, the following result holds.

\begin{Proposition}
Take $\bar \jj\in \N_+$ and
\begin{equation}\label{eq:TJ}
T > \frac{\sqrt{\bar \jj^2+4}+\sqrt{\bar \jj^2+1}}{3}.
\end{equation}
Then, for every $1\le \jj\le \bar \jj$ we have
\begin{equation*}
\sum_{n\in \N_+} |a_n|^2 \le 2\pi\Big(1-\Big(\frac{\sqrt{\bar \jj^2+4}+\sqrt{\bar \jj^2+1}}{3T}\Big)^2\Big)^{-1}  \Bigl\|\sum_{n\in \N_+} a_n \cos(\lambda_{n,\jj} \cdot)\Bigr\|_{L^2([0,T])}^2,\qquad a\in \ell^2.
\end{equation*}
\end{Proposition}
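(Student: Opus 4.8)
The plan is to argue exactly as in the proof of Proposition~\ref{prop:2ddiskfree}, applying Corollary~\ref{cor:cos}, part~2, to the sequence $\lambda_{n,\jj}=\pi\sqrt{n^2+\jj^2}$, $n\in\N_+$, but this time keeping track of the dependence on $\jj$ and extracting a lower Riesz bound that is uniform over $1\le\jj\le\bar\jj$. For a fixed $\jj$ the relevant quantity is $\gamma_\jj:=\min\bigl(\inf_{n\in\N_+}(\lambda_{n+1,\jj}-\lambda_{n,\jj}),\,2\lambda_{1,\jj}\bigr)$.

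First I would identify $\inf_{n\in\N_+}(\lambda_{n+1,\jj}-\lambda_{n,\jj})$. Writing $\lambda_{n+1,\jj}-\lambda_{n,\jj}=\pi\int_n^{n+1}\frac{t}{\sqrt{t^2+\jj^2}}\,dt$ and noting that $t\mapsto t/\sqrt{t^2+\jj^2}$ is increasing (equivalently, $x\mapsto\sqrt{x^2+\jj^2}$ is convex, since its second derivative $\jj^2(x^2+\jj^2)^{-3/2}$ is positive for $\jj\ge1$), the gap sequence $n\mapsto\lambda_{n+1,\jj}-\lambda_{n,\jj}$ is increasing in $n$, so its infimum is attained at $n=1$:
\[
\inf_{n\in\N_+}(\lambda_{n+1,\jj}-\lambda_{n,\jj})=\lambda_{2,\jj}-\lambda_{1,\jj}=\pi\bigl(\sqrt{\jj^2+4}-\sqrt{\jj^2+1}\bigr)=\frac{3\pi}{\sqrt{\jj^2+4}+\sqrt{\jj^2+1}}.
\]
Since $\sqrt{\jj^2+4}<3\sqrt{\jj^2+1}$ (equivalently $8\jj^2+5>0$), this is strictly smaller than $2\lambda_{1,\jj}=2\pi\sqrt{\jj^2+1}$, hence $\gamma_\jj=\dfrac{3\pi}{\sqrt{\jj^2+4}+\sqrt{\jj^2+1}}$.

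Next, since $\jj\mapsto\gamma_\jj$ is decreasing, for every $1\le\jj\le\bar\jj$ one has $\gamma_\jj\ge\gamma_{\bar\jj}=\dfrac{3\pi}{\sqrt{\bar\jj^2+4}+\sqrt{\bar\jj^2+1}}$, and the hypothesis \eqref{eq:TJ} is precisely the statement $\pi/T<\gamma_{\bar\jj}\le\gamma_\jj$. Therefore Corollary~\ref{cor:cos}, part~2, applies and shows that $\{\cos(\lambda_{n,\jj}\cdot)\}_{n\in\N_+}$ is a Riesz sequence in $L^2([0,T])$ with lower bound $A_\jj=\tfrac1{2\pi}\bigl(1-(\pi/(T\gamma_\jj))^2\bigr)$. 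Using $\gamma_\jj\ge\gamma_{\bar\jj}$ once more gives $A_\jj\ge\tfrac1{2\pi}\bigl(1-\bigl(\tfrac{\sqrt{\bar\jj^2+4}+\sqrt{\bar\jj^2+1}}{3T}\bigr)^2\bigr)$, and taking reciprocals in $\sum_n|a_n|^2\le A_\jj^{-1}\bigl\|\sum_n a_n\cos(\lambda_{n,\jj}\cdot)\bigr\|_{L^2([0,T])}^2$ yields the stated inequality.

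The argument presents no real obstacle; the only points requiring care are (i) that the gap sequence $n\mapsto\lambda_{n+1,\jj}-\lambda_{n,\jj}$ is minimized at $n=1$, which is the convexity argument above, and (ii) verifying that the first entry of the $\min$ defining $\gamma_\jj$ is the operative one, which reduces to the elementary inequality $\sqrt{\jj^2+4}<3\sqrt{\jj^2+1}$. Everything else is a direct invocation of Corollary~\ref{cor:cos} combined with the monotonicity of $\jj\mapsto\gamma_\jj$.
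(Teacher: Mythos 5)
Your proof is correct and follows essentially the same route as the paper: compute the minimum gap $\lambda_{2,\jj}-\lambda_{1,\jj}=\frac{3\pi}{\sqrt{\jj^2+4}+\sqrt{\jj^2+1}}$, check it is the operative term in the definition of $\gamma$, and invoke Corollary~\ref{cor:cos}, part 2, with the uniform bound over $1\le\jj\le\bar\jj$ coming from the monotonicity of the gap in $\jj$. Your extra details (the convexity argument for monotonicity of the gap in $n$ and the explicit verification that $\gamma_\jj\ge\gamma_{\bar\jj}$) only make explicit what the paper leaves implicit.
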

\begin{proof}
The distance between the square roots of two consecutive eigenvalues is
\begin{equation}\label{eq:difference}
\lambda_{n+1,\jj}-\lambda_{n,\jj} = \pi \big( \sqrt{\jj^2+(n+1)^2}-\sqrt{\jj^2+n^2} \big) =  \frac{\pi(2n+1)}{\sqrt{\jj^2+(n+1)^2}+\sqrt{\jj^2+n^2}} ,
\end{equation}
which is increasing in $n$, so that its minimum is attained at $n=1$. Thus
\begin{equation*}
\inf_{n \in \mathbb{N_+}}(\lambda_{n+1,\jj}-\lambda_{n,\jj}) = \frac{3\pi}{\sqrt{\jj^2+4}+\sqrt{\jj^2+1}} 
\le 2\pi\sqrt{\jj^2+1} 
= 2\lambda_{1,\jj}
.
\end{equation*}
The result is now an immediate consequence  of Corollary~\ref{cor:cos}, part 2.
\end{proof}

As above, we can apply this result to the measurements $g_\jj$ given in \eqref{eq:meas-square}, provided that $T$ satisfies \eqref{eq:TJ}. For $1\le \jj\le \bar \jj$, the reconstruction is uniformly stable:
\[
\sum_{n\in\N_+} |(f,\varphi_{n,\jj})_{L^2(\Omega)}|^2\le 
\frac{1}{\pi} \Big(1-\Big(\frac{\sqrt{\bar \jj^2+4}+\sqrt{\bar \jj^2+1}}{3T}\Big)^2\Big)^{-1}\|g_\jj\|_{L^2([0,T])}^2,
\]
and reconstruction may be performed using \eqref{eq:recsquare}. As a consequence, in view of CS, the Fourier coefficients recovered would be $(f,\varphi_{n,\jj})_{L^2(\Omega)}$ for $n\in\N_+$ and $\jj\in L$, where $L\subseteq\{1,\dots,\bar \jj\}$ is the subsampling pattern of the measurements. This is illustrated in Figure~\ref{fig:2dsquare1}, in which $L=\{1,2,5,8,10,14,17\}$.

 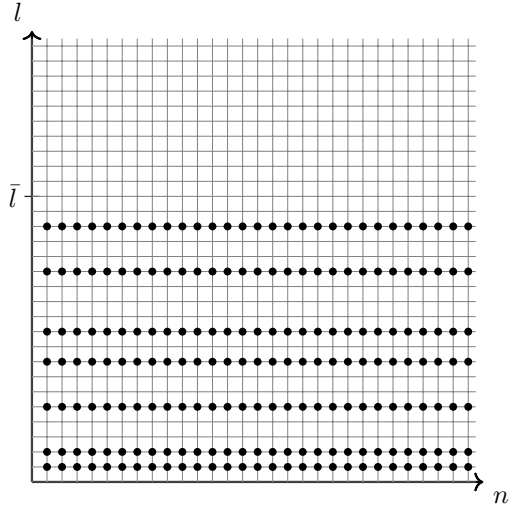
\begin{figure}
 \centering
 \begin{tikzpicture}[dotr/.style={fill=red,circle,inner sep=0pt,minimum size=4pt},dotb/.style={fill=blue,circle,inner sep=0pt,minimum size=4pt},dot/.style={fill,circle,inner sep=0pt,minimum size=3pt}]
 
 \draw[thick,->] (0,0) -- (6,0);
\draw[thick,->] (0,0) -- (0,6);
\draw[thick,->] (0,0) -- (6,0) node[anchor=north west] {$n$};
\draw[thick,->] (0,0) -- (0,6) node[anchor=south east] {$\jj$};

\node at (-.25,3.8) {$\bar l$};
 \draw[-] (-.1,3.8) -- (.1,3.8);

\draw[step=0.2cm,gray,very thin] (0,0) grid (5.9,5.9);
    
    \foreach \y in {1,2,5,8,10,14,17}
    {\foreach \x in {1,...,29} 
    {\node [dot] at (.2*\x,.2*\y) {};} 
    }

\end{tikzpicture}
\caption{Example of a subsampling pattern for the 2D square with measurements on one vertical side. The coefficients $(f,\varphi_{n,\jj})_{L^2(\Omega)}$ corresponding to the dots in the horizontal strips are recovered, with $L=\{1,2,5,8,10,14,17\}$. }
\label{fig:2dsquare1}
 \end{figure}

\subsection{2D square: reconstruction from two sides }\label{sub:2Dtwosides}

In the previous subsection, we showed that if we activate only one side, namely a vertical side, we can stably reconstruct the moments $(f, \varphi_{n,\jj})_{L^2([0,1]^2)}$ for which $\jj \leq \bar \jj$ and for any $n \in \N_+$, where $\bar \jj$ is a frequency that guarantees uniform stability for a given measuring time $T\geq1$.

As a consequence, if we activate \emph{two} adjacent sides, we can stably reconstruct the moments for which \emph{at least one} of the frequency indices ($n$ or $k$) is smaller than $\bar \jj$. What about the high frequencies, for which $n$ and $k$ are both high?
 We now present a reconstruction method that is able to recover all the coefficients $(f, \varphi_{n,k})_{L^2([0,1]^2)}$ for any pair of frequencies $n,k \in \N_+$. The stability of this reconstruction is still an open question, even though we believe it is uniform and of Lipschitz type.
 
 Recall that the instability for high frequencies was due to the fact that, by \eqref{eq:difference}, we have
 \[
 \lim_{\jj\to+\infty} \lambda_{n+1,\jj}-\lambda_{n,\jj} = 0,\qquad n\in\N_+.
 \]
The key observation here is that, if we restrict to the tail $n\ge \jj$, since the distance $ \lambda_{n+1,\jj}-\lambda_{n,\jj}$ is increasing in $n$, by \eqref{eq:difference} we have
\begin{equation*}
\inf_{n \ge \jj}(\lambda_{n+1,\jj}-\lambda_{n,\jj}) = 
\lambda_{\jj+1,\jj}-\lambda_{\jj,\jj}
=\frac{\pi(2\jj+1)}{\sqrt{\jj^2+(\jj+1)^2}+\sqrt{2}\,\jj}\ge \frac{\pi}{\sqrt{2}},\qquad \jj\in\N_+.
\end{equation*}
In other words, these distances are bounded from below uniformly in $\jj\in\N_+$. This suggests a bound of the type
\begin{equation}\label{eq:conjecture}
\sum_{n=\jj}^{+\infty} |(f,\varphi_{n,\jj})_{L^2(\Omega)}|^2\le 
C \|g_\jj\|_{L^2([0,T])}^2,\qquad \jj\in\N_+,
\end{equation}
 for some absolute constant $C>0$ (independent of $\jj$), provided that $T$ is sufficiently large (independently of $\jj$). We have not been able to rigorously prove this estimate, but our numerical experiments provided in Section~\ref{sec:numerics} strongly support this bound. Choosing a few measurements $\jj\in L_{\mathrm{ver}}$ for some finite $L_{\mathrm{ver}}\subseteq\N_+$ allows for the stable reconstruction of the Fourier coefficients
$
 (f,\varphi_{n,\jj})_{L^2(\Omega)}
$ for $\jj\in L_{\mathrm{ver}}$ and $n\ge \jj$.  By repeating the same argument for the measurements on the horizontal side $ [0,1]\times \{1\}$, we can stably recover the Fourier coefficients $
 (f,\varphi_{\jj,k})_{L^2(\Omega)}
$ for $\jj\in L_{\mathrm{hor}}$ and $k\ge \jj$. An example is shown in Figure~\ref{fig:2dsquare}, in which $L_{\mathrm{ver}}=\{1,2,5,8,10,14,17,19,25\}$ and $L_{\mathrm{hor}}=\{1,3,6,9,11,15,18,22,27\}$. In the hypothetical full-measurement setting, this approach yields a stable reconstruction method of all Fourier coefficients of $f$.

\begin{Remark}
It is worth observing that the different behaviour of the stability constants  with one or two sides is perfectly consistent with the stability of the inverse problem with full measurements discussed in $\S$\ref{subsub:bounded}:  the inverse problem is Lipschitz stable with measurements taken on two adjacent sides, but not with only one side.
\end{Remark}

 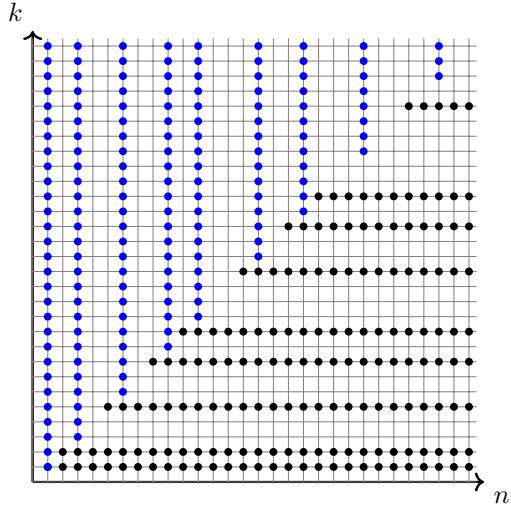
\begin{figure}
 \centering
 \begin{tikzpicture}[dotr/.style={fill=red,circle,inner sep=0pt,minimum size=4pt},dotb/.style={fill=blue,circle,inner sep=0pt,minimum size=3pt},dot/.style={fill,circle,inner sep=0pt,minimum size=3pt}]
 
 \draw[thick,->] (0,0) -- (6,0);
\draw[thick,->] (0,0) -- (0,6);
\draw[thick,->] (0,0) -- (6,0) node[anchor=north west] {$n$};
\draw[thick,->] (0,0) -- (0,6) node[anchor=south east] {$k$};

\draw[step=0.2cm,gray,very thin] (0,0) grid (5.9,5.9);
    
    \foreach \y in {1,2,5,8,10,14,17,19,25}
    {\foreach \x in {\y,...,29} 
    {\node [dot] at (.2*\x,.2*\y) {};} 
    }
    
    \foreach \y in {1,3,6,9,11,15,18,22,27}
    {\foreach \x in {\y,...,29} 
    {\node [dotb] at (.2*\y,.2*\x) {};} 
    }
\end{tikzpicture}
\caption{Example of subsampling pattern for the 2D square with measurements on two sides. The coefficients $(f,\varphi_{n,k})_{L^2(\Omega)}$ corresponding to the black dots in the horizontal strips are recovered from the measurements on the vertical side $\{1\}\times [0,1]$, while those  corresponding to the blue dots in the vertical strips from the measurements on the horizontal side $ [0,1]\times \{1\}$. }
\label{fig:2dsquare}
 \end{figure}

\section{An example of structured subsampling pattern}\label{sec:CS}

In this section, we discuss the subsampling pattern arising in the case of the 2D square with measurements on one side (see Figure~\ref{fig:2dsquare1}). This provides an example of the challenges of a compressed sensing problem with a structured subsampling pattern.

According to $\S$\ref{sub:2Doneside}, it is possible to stably reconstruct the generalised Fourier coefficients $(f, \phi_{n,\jj})_{L^2(\O)}$ for $\jj$ smaller than a chosen $\bar \jj \in \N_+$ and for all $n \in \N_+$. Our goal is to subsample the frequencies indexed by $\jj$, without losing information on the structured unknown signal $f$. We will address the problem from a more abstract point of view.

Let the sampling basis $\{\psi_{l_1,l_2}\}_{l_1,l_2 \in \N}$ be an orthonormal basis of $L^2([0,1]^2)$ made of tensorised functions:
\begin{equation}\label{eq:tensor}
\psi_{l_1,l_2}(x) = \psi_{l_1}(x_1) \psi_{l_2}(x_2), \qquad  l_1,l_2 \in \N,
\end{equation}
where $\{\psi_{l}\}_{l \in \N}$ is an orthonormal basis of $L^2([0,1])$. In our previous discussion, this was the Fourier basis, with $\psi_{l}(s)=\sqrt{2} \sin (\pi l s)$ for  $l \in \N_+$. Similarly, let $\{\varphi_{j_1,j_2} = \varphi_{j_1} \otimes \varphi_{j_2} \}_{j_1,j_2 \in \N}$ be the sparsity basis, for example a wavelet basis. The tensor symbol $\otimes$ simply means that the sparsity basis admits a representation of the form \eqref{eq:tensor}, like the sampling basis. \par
Suppose that the unknown $f \in L^2([0,1]^2)$ is sparse with respect to the sparsity basis $\{\phi_{j_1,j_2}\}_{j_1,j_2 \in \N}$. The problem is to reconstruct $f$ from measurements of the form
\begin{equation}\notag
(f,\psi_{l_1,l_2}), \qquad l_1 \in  \N, \ l_2 \in L \subseteq\N,
\end{equation}
where $L \subseteq \N$ is finite and possibly small.\par
Let us expand $f$ on the sparsity basis
\begin{equation}\notag
f = \sum_{j_1,j_2 \in \N} c_{j_1,j_2} \phi_{j_1, j_2}.
\end{equation}
Then, for every $l_1\in  \N$ and $l_2 \in L \subseteq\N$
\begin{equation}\notag
\begin{aligned}
(f,\psi_{l_1,l_2}) & = \sum_{j_1,j_2 \in \N} c_{j_1,j_2} (\phi_{j_1, j_2}, \psi_{l_1,l_2}) \\
& = \sum_{j_1,j_2 \in \N} c_{j_1,j_2} (\phi_{j_1}, \psi_{l_1})(\phi_{j_2}, \psi_{l_2}) \\
& = \Bigg(\sum_{j_1,j_2 \in \N} c_{j_1,j_2}  (\phi_{j_2}, \psi_{l_2}) \phi_{j_1}, \psi_{l_1}\Bigg).
\end{aligned}
\end{equation}
Since $\{\psi_{l_1}\}_{l_1 \in \N}$ is an orthonormal basis,  we have
\begin{equation}\notag
\sum_{j_1,j_2 \in \N} c_{j_1,j_2}  (\phi_{j_2}, \psi_{l_2}) \phi_{j_1} = \sum_{l_1\in \N}(f,\psi_{l_1,l_2})\psi_{l_1}, \qquad  l_2 \in L.
\end{equation}
And since $\{\varphi_{j_1}\}_{j_1 \in \N}$ is also an orthonormal basis, we can  reconstruct
\begin{equation}\notag
\Bigg(\sum_{j_2 \in \N} c_{j_1,j_2} \phi_{j_2}, \psi_{l_2}\Bigg)
= \sum_{l_1\in \N}(f,\psi_{l_1,l_2})(\psi_{l_1},\varphi_{j_1}),\qquad j_1 \in \N,l_2 \in L.
\end{equation}

We can reinterpret the previous argument in the following way: for each fixed $j_1 \in \N$ we measure
\begin{equation}\notag
\Bigg(\sum_{j_2 \in \N} c_{j_1,j_2} \phi_{j_2}, \psi_{l_2}\Bigg), \qquad l_2 \in L,
\end{equation}
which is a one-dimensional CS problem consisting in the reconstruction of $\sum_{j_2 \in \N} c_{j_1,j_2} \phi_{j_2}$ from partial measurements with the subsampling pattern $\{\psi_{l_2}\}_{l_2 \in L}$. Therefore, if $(c_{j_1,j_2})_{j_2 \in \N}$ is sparse, then it is possible to do the reconstruction from measurements only in $L$ by using the classical CS theory. It is worth observing that the subsampling pattern $L$ has to be chosen a priori independently of $j_1\in\N$, and in particular independently of the vector do be reconstructed. As a consequence, CS results of uniform type are needed (see, e.g., \cite{2019-chen-adcock}).

\section{Numerical simulations}\label{sec:numerics}

In this section we present numerical experiments for the bounded domain model, in the particular case of the 2D unit square. We chose to focus on the bounded domain model and not on the free-space model because the latter has been investigated more, and in particular the reconstruction method given in Theorem~\ref{thm:sine} was already known, albeit not applied in a CS setting. On the other hand, the reconstruction in the bounded domain model using the Riesz sequences is new, to the best of our knowledge. Furthermore, considering the square instead of the disk allows us to illustrate the stability issue and the dependence on the measurement surface $\Gamma$.

Two cases are considered:
\begin{enumerate}
\item full measurements are taken on one or two sides;
\item the measurements are taken on two sides and are randomly subsampled,  and a sparsity promoting algorithm is used (TV regularisation).
\end{enumerate}

\subsection{Setup and simulation of PAT data}

In all simulations, the initial value $f$ to be recovered is the Shepp-Logan phantom (see Figure~\ref{fig:shepp}). To generate the measurements $g_l(t) = (\partial_\nu p(\cdot, t), \Phi_\jj)_{L^2(\Gamma)}$ for $t \in [0,T]$ and $l \in \N_+$, we solve the wave equation with Dirichlet boundary conditions \eqref{eq:BDD} on the unit square $\Omega = [0,1]\times [0,1]\subseteq \R^2$ to obtain the solution $p$. This is done using a finite difference approximation  \cite{Linge2017}, with a square mesh of side length $2\cdot 10^{-4}$. For most simulations, the final time is $T=3$ and the interval $[0,T]$ is divided into $N$ sub-intervals $[t_{i-1}, t_i]$ of length  $t_i - t_{i-1} = \sqrt 2 \cdot 10^{-4}$ for $i = 1,\dots,N$ where $N = 21,214$. The time step is chosen in order to guarantee stability \cite{Linge2017}. The choice of $T$ ensures observability of the wave equation when we measure on two sides \cite{1989-grisvard,Alberti2018}. In general, observability is guaranteed provided that $T \geq c(\Omega)$ for some constant $c(\Omega)$ depending on the domain $\Omega$ and that $\Gamma$ is large enough; for the unit square, $c([0,1]^2) = 2\sqrt{2}$ and while two sides are enough, one side is not.

\begin{figure}
	\begin{center}
		\includegraphics[width=6cm]{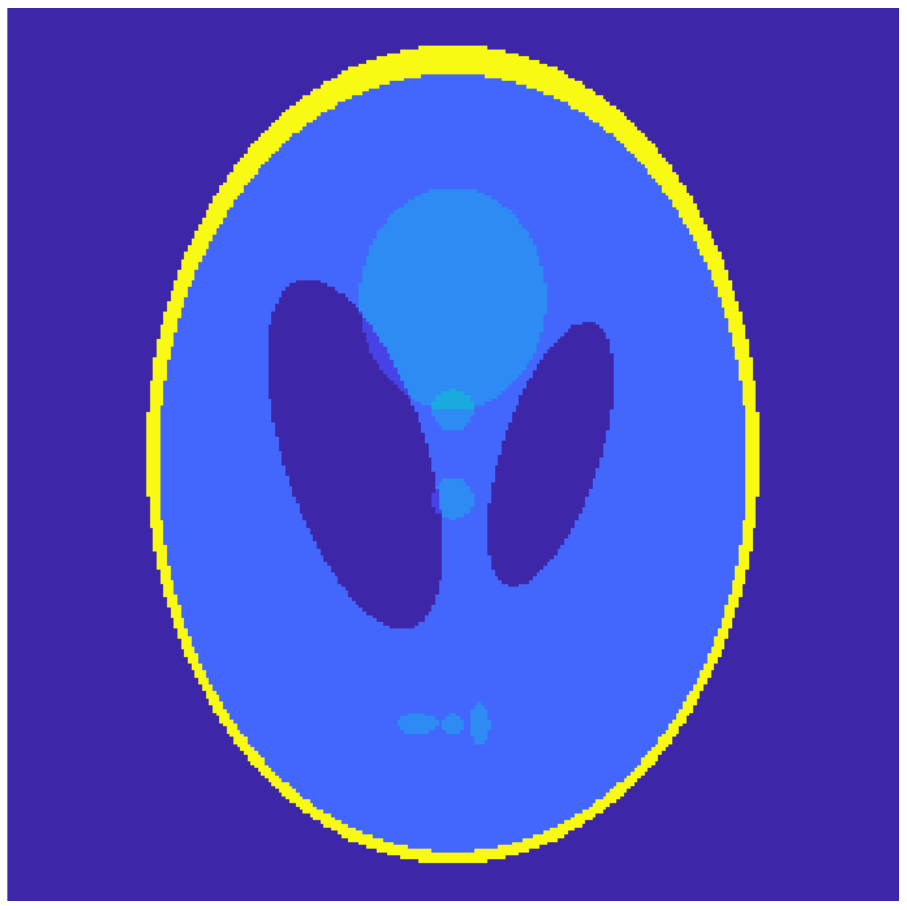}
	\end{center}
	\caption{\label{fig:shepp}Shepp-Logan phantom.}
\end{figure}

Given the solution $p$ of the wave equation, we again use finite differences to approximate its normal derivative on the boundary $\partial_\nu p(\cdot,t_i)|_{\Gamma}$ for $i = 1,\dots,N$.

We finally compute the scalar products on the boundary. We obtain two matrix approximations of the data $\{g_l(t)\}_{l\in \N_+,t\in[0,T]}$, one for the horizontal side  $\Gamma = [0,1]\times \{1\}$  and one for the vertical side $\Gamma = \{1\}\times[0,1]$, which we denote by $G^h$ and $G^v$, respectively. These are defined as:
\[
(G^h)_{l,i} = (\partial_\nu p (t_i),\Phi^h_l)_{L^2([0,1]\times \{1\})}, \quad (G^v)_{l,i} = (\partial_\nu p (t_i),\Phi^v_l)_{L^2(\{1\} \times [0,1])}
\]
for $l = 1,\dots,L_{\text{max}}$, $i= 1,\dots, N$, where we denote
\[
\Phi^h_{l}(x_1):= \sqrt{2}\sin(l\pi x_1) ,\quad x_1\in [0,1], \quad \Phi^v_{l}(x_2):= \sqrt{2}\sin(l\pi x_2) ,\quad x_2\in [0,1],
\]
and $L_{\text{max}}$ has to be much smaller than $N$ to avoid Gibbs phenomena. In our experiments we choose $L_{\text{max}} = 256$, which provides sufficiently high frequency information and it is suited to compute the fast Fourier transform in the compressed sensing reconstructions.

We add zero-mean random Gaussian noise to the matrices $G^h$ and $G^v$ to obtain noisy matrices $ \tilde G^h$ and $\tilde G^v$ with relative noise defined as
\begin{equation}\label{def:noise}
\delta_h = \frac{\max_{l,i}| (\tilde G^h)_{l,i} - (G^h)_{l,i}| }{\max_{l,i}|(G^h)_{l,i}|}, \quad \delta_v = \frac{\max_{l,i}|(\tilde  G)^v_{l,i} - (G^v)_{l,i}|}{\max_{l,i}|(G^v)_{l,i}|}.
\end{equation}
which is a discretisation of a (relative) $L^\infty([0,T])$ norm in the time variable and an  $\ell^\infty$ norm in the frequency $l$.

\subsection{Reconstruction from one and two sides without subsampling}\label{sub:rec1}

Our goal is to reconstruct the generalised Fourier coefficients $\{(f,\varphi_{n,k})_{L^2(\Omega)}\}_{n,k}$ of the phantom $f$ with respect to the eigenfunctions $\{\varphi_{n,k}\}_{n,k}$ of the Dirichlet Laplacian in the unit square. These are given by
\begin{align*}
\varphi_{n,k}(x_1, x_2) = 2 \sin(n\pi x_1) \sin(k\pi x_2),  \quad \lambda_{n,k}= \pi \sqrt{n^2+k^2}. 
\end{align*}
Here, $n,k = 1, \dots J$, where we choose $J=L_{\text{max}}=256$. 

We now illustrate how to perform such reconstruction given full measurements either on one side or on two sides, i.e., whether we are given only one of the matrices $G^h$ and $G^v$ or both of them.

\subsubsection*{One side}
Let us first consider the case where measurements are taken on one side only.
Assume we are given the measurements $G^h$ on the horizontal top side. For fixed $l \in \{1,\dots, L_{\text{max}}\}$, we solve the following system, which arises by truncating and discretising \eqref{eq:meas-square}: 
\begin{equation}\label{eq:discsys}
A_\jj f_l=(G^h)_{\jj} ,
\end{equation}
where $(G^h)_{\jj}$ is the $l$-th row of the matrix $G^h$, $f_l =\big( (f,\varphi_{n,\jj})_{L^2(\Omega)}\big)_{n=1,\dots,J} \in \R^J$ and $A_l \in \R^{N \times J}$ is defined as
\[
(A_l)_{i,n} = (-1)^n \sqrt{2}\pi n \cos(\lambda_{n,l} t_i).
\]
Solving the linear system \eqref{eq:discsys} yields the coefficients $(f,\varphi_{n,l})_{L^2(\Omega)}$ for $n=1,\dots,J$. Letting $l$ vary, we obtain all the generalised Fourier coefficients $(f,\varphi_{n,l})_{L^2(\Omega)}$ for $n,l=1,\dots,J$. However, since the observability condition is not satisfied, this reconstruction cannot be stable. Indeed, as noticed in Section~\ref{sub:2Doneside}, the reconstruction is unstable for large values of $l$ (relative to the final time $T$) and therefore part of the coefficients must be discarded. Thus, we stably reconstruct all coefficients  $(f,\varphi_{n,\jj})_{L^2(\Omega)}$ for $n=1,\dots,J$ and $\jj=1,\dots,\bar \jj$, where $\bar l \ll J$ and depends on the final time $T \geq c(\Omega)$ and on the noise level. Furthermore, we notice that - independently of $T$ - the coefficients with $l \leq n$ are stably reconstructed, while for $l \geq n$ the quality of the reconstruction depends on $T$ and quickly deteriorates as $l$ grows. Therefore, when measuring on a horizontal side, we can stably reconstruct the coefficients $\{(f,\varphi_{n,l})_{L^2(\Omega)}\}_{l \leq n}$. This observation corroborates our conjecture explained in Section \ref{sub:2Dtwosides}. 

In the very same way, if we take measurements on the vertical side (i.e.\ if we are given the matrix $G^v$), then the same considerations on stability apply by swapping the two indices and we can stably reconstruct the coefficients $(f,\varphi_{l,k})_{L^2(\Omega)}$ for $l \leq k$.

\subsubsection*{Two sides}

If we are given measurements on two sides, i.e.\ both matrices $G^h$ and $G^v$ are known, then we can combine the two previous cases and stably reconstruct \emph{all} the generalised Fourier coefficients. First, for each side we solve the family of the associated linear systems and collect the two sets of coefficients $(f,\varphi_{n,k})_{L^2(\Omega)}$. Then, from the set corresponding to the horizontal measurements, we extract only the coefficients $(f,\varphi_{n,k})_{L^2(\Omega)}$ with $k < n$, while from the vertical measurements we extract those with $k > n$. On the diagonal $k=n$, we average the two coefficients. Proceeding this way, we stably obtain all coefficients $(f,\varphi_{n,k})_{L^2(\Omega)}$ for $n,k = 1,\dots,J$.\smallskip

\subsubsection*{Reconstruction formula}

In all the three cases, after stably recovering (part of) the generalised Fourier coefficients, we can find an approximation of $f$ using the partial sum 
	\begin{equation}\label{eq:psum}
	f(x_1,x_2) = \sum_{n,k} (f,\varphi_{n,k})_{L^2(\Omega)}\varphi_{n,k}(x_1,x_2), \qquad (x_1,x_2) \in [0,1]^2,
	\end{equation}
where the indices range over the set where the coefficients are stably recovered, which depends on the side (or sides) where measurements are taken,  as explained above.

\subsection{Compressed sensing PAT}\label{sub:CS}

We next consider the problem of reconstructing \emph{all} the generalised Fourier coefficients $\{(f,\varphi_{n,k})_{L^2(\Omega)}\}_{n,k = 1,\dots,J}$ from a subset of them which has a specific structure: we consider a compressed sensing problem with generalised Fourier data where the frequencies are undersampled with the structured pattern presented in Section~\ref{sub:2Dtwosides}. For the numerical experiments we acquire PAT measurements on two sides and we consider a two-level sampling scheme as in \cite[Section 4.1]{Adcock2017}: first, we fully sample the  frequencies for which one of the indices is low, i.e.\ the frequencies $(n,k) \in (\{1,\dots,J_0\}\times \{1,\dots,J\}) \cup (\{1,\dots,J\}\times\{1,\dots,J_0\})$ for some $J_0 \ll J$; then we randomly sample single indices $n$ and $k$ from the interval $\{J_0  + 1, \dots,J\}$, following a non-uniform distribution that is more concentrated in the low frequencies (log-sampling, see \cite{Alberti2017}) and sample the Fourier coefficients indexed by the half lines $\{n\}\times\{n,\dots,J\}$ and $\{k,\dots,J\}\times\{k\}$. We call $\Theta \subset \{1,\dots,J\}^2$ the set of the indices of the sampled coefficients. See Figure~\ref{fig:samp} for an example: the white pixels denote the sampled coefficients and the axes represent the $n$ and $k$ indices.

\begin{figure}
\begin{center}
\includegraphics[width=6cm]{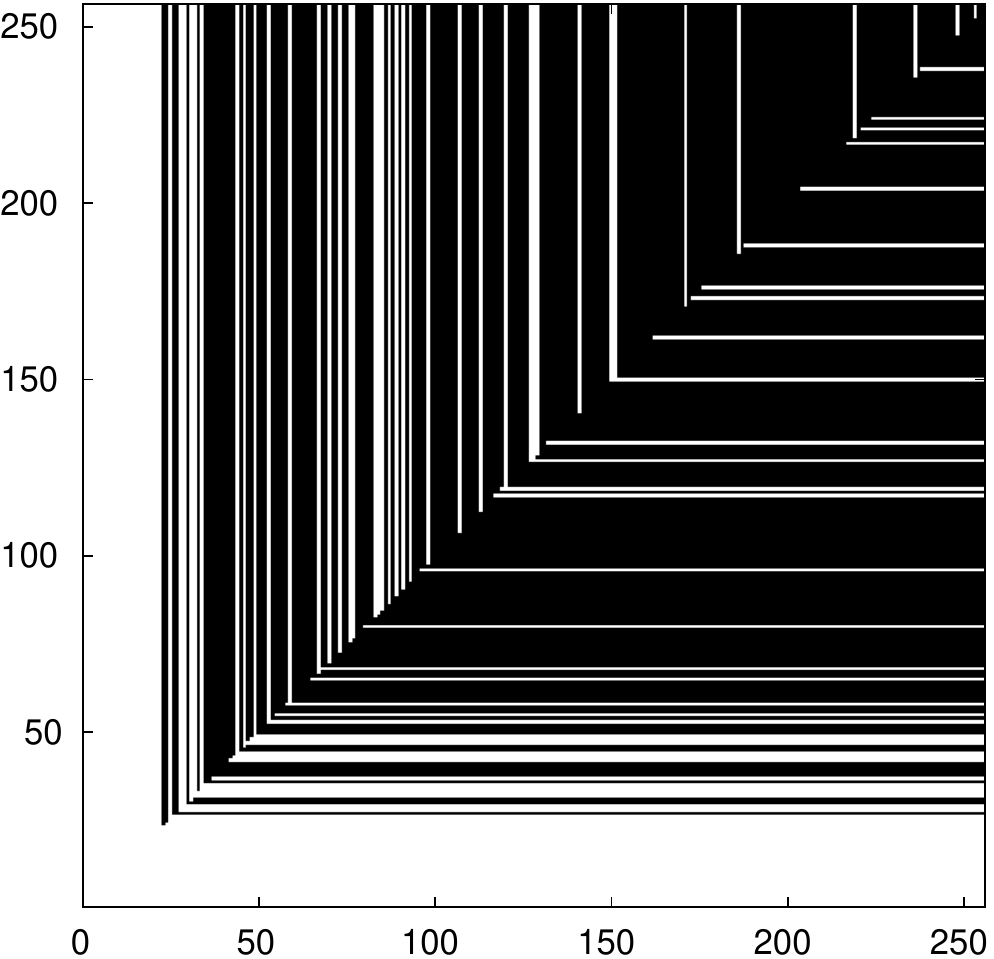}
\end{center}
\caption{\label{fig:samp} Example of our two-level subsampling scheme. The horizontal axis represents the $n$ index and the vertical axis represents the $k$ index. The white part represents the sampled coefficients. Here we fully sample both indices until $J_0 = 22$, and then $30+30$ vertical and horizontal half-lines are non-uniformly randomly sampled between $23$ and $256$.}
\end{figure}

Here, we chose not to impose sparsity constraints with respect to an orthonormal basis of tensorised functions (e.g.\ wavelets) as in Section~\ref{sec:CS}, but we assume that the unknown has a sparse gradient, which is known to give better results \cite{2013-needell-ward}. As in classical compressed sensing works \cite{Candes2006}, we use Total Variation (TV) regularisation to recover $f$. Theoretical guarantees are provided in \cite{2013-needell-ward,2015-poon}. Let $F \in \R^{J \times J}$ be the discretisation of an image in the unit square and $\| F\|_{TV}$ its discrete TV norm defined as
\[
\| F\|_{TV} = \sum_{n,k} \sqrt{|(D_1 F)_{n,k}|^2 + |(D_2 F)_{n,k}|^2 },
\]
where $D_1$ and $D_2$ are the finite differences $(D_1 F)_{n,k} := F_{n,k}-F_{n-1,k}$ and $(D_2 F)_{n,k} := F_{n,k}-F_{n,k-1}$. To recover $f$ from incomplete generalised Fourier samples we find a solution to the optimisation problem
\begin{equation}\label{eq:TVmin}
\min \|F \|_{TV} \; \text{ subject to } \hat F_{n,k} =  (f,\varphi_{n,k})_{L^2(\Omega)}, \quad \text{for } (n,k) \in \Theta,
\end{equation}
where we denoted by $\hat F_{n,k}$ the coefficients of the 2D discrete sine transform (DST):
\[
\hat F_{n,k} =\frac{2}{J+1} \sum_{n',k' = 1}^J F_{n',k'}\sin\left(\pi n \frac{n'}{J+1}\right)\sin\left(\pi k \frac{k'}{J+1}\right),
\]
which is the discretised version of  the generalised Fourier coefficients of $F$.

In order to numerically solve the convex optimisation problem \eqref{eq:TVmin}, we have adapted the Matlab code $\ell_1$-\textsc{MAGIC} \cite{candes2005l1} to our setting. In particular, we implemented the DST instead of the discrete Fourier transform: up to a zero-padding, this is done by computing the fast Fourier transform and taking its imaginary part.

\subsection{Results}

\subsubsection{Reconstruction from one and two sides without subsampling}

In Figure~\ref{fig:recfull} we present the reconstructions obtained from PAT noisy measurements taken either on one or two sides, using different noise levels with $\delta_v \approx \delta_h$.

\begin{figure}
\begin{picture}(300,420)
\put(20,300){\includegraphics[width=4.5cm]{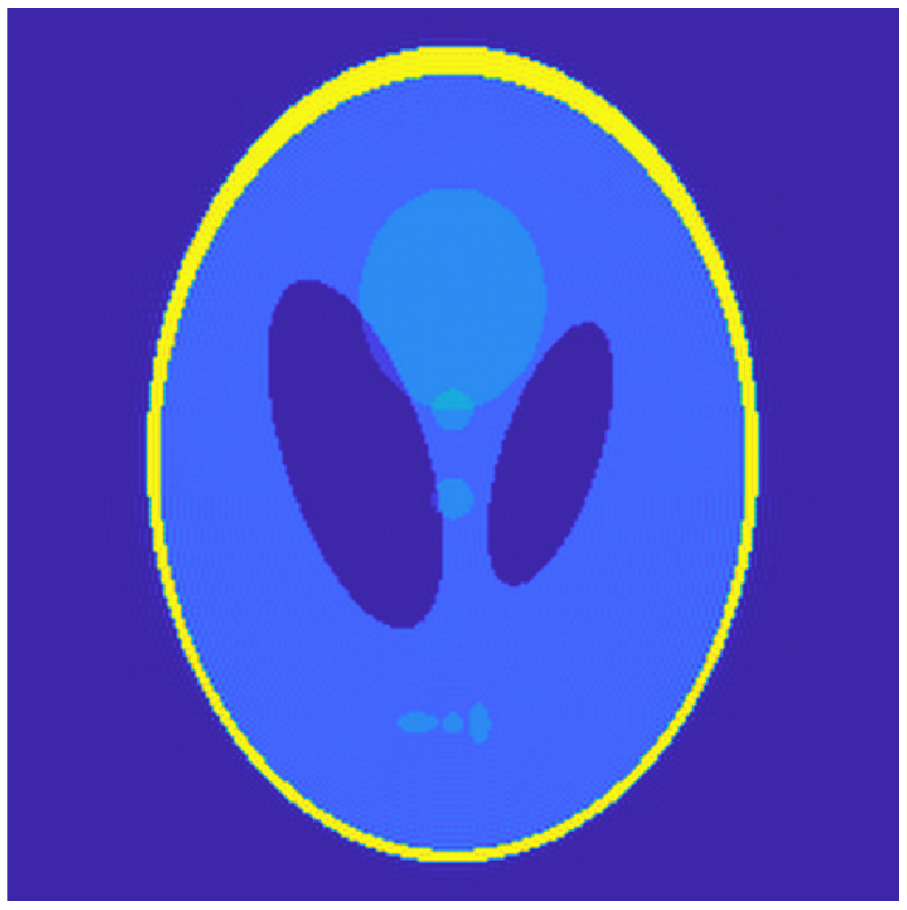}}
\put(160,300){\includegraphics[width=4.5cm]{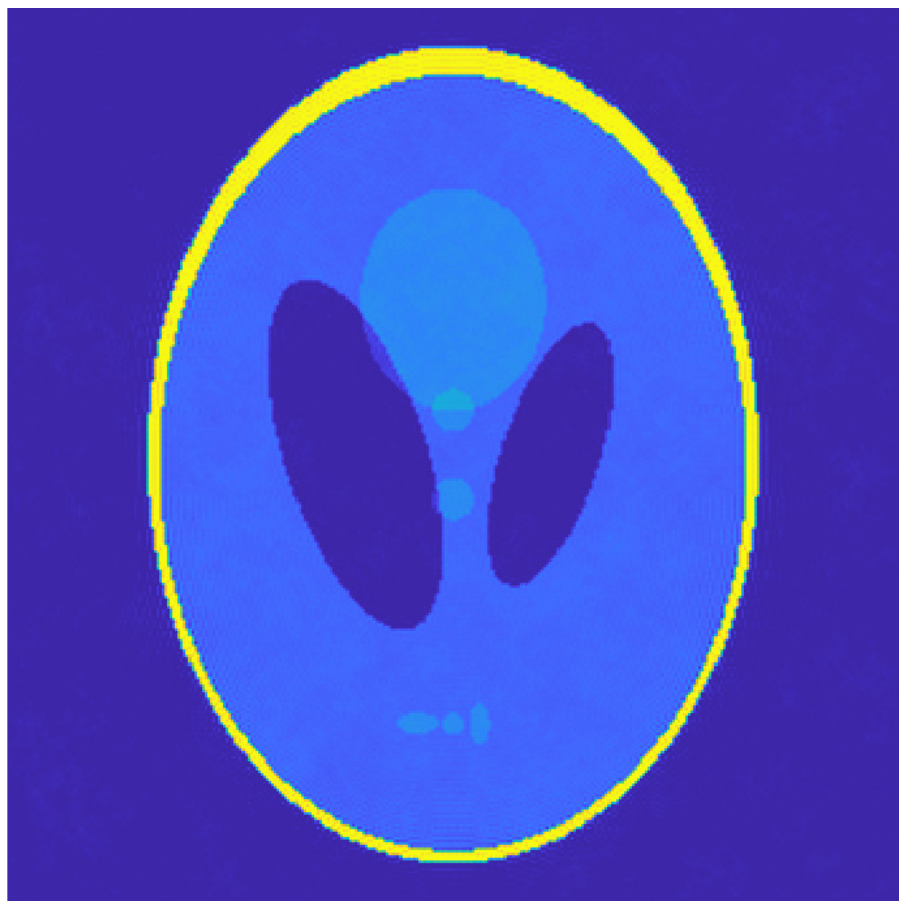}}
\put(300,300){\includegraphics[width=4.5cm]{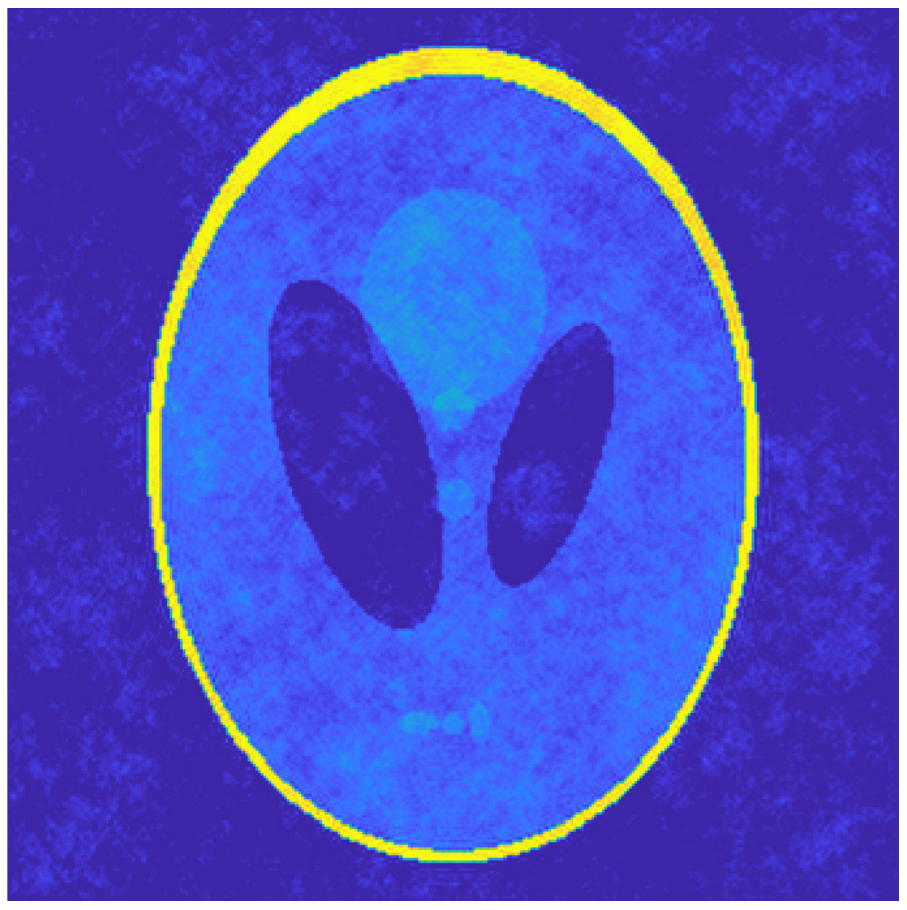}}
\put(20,160){\includegraphics[width=4.5cm]{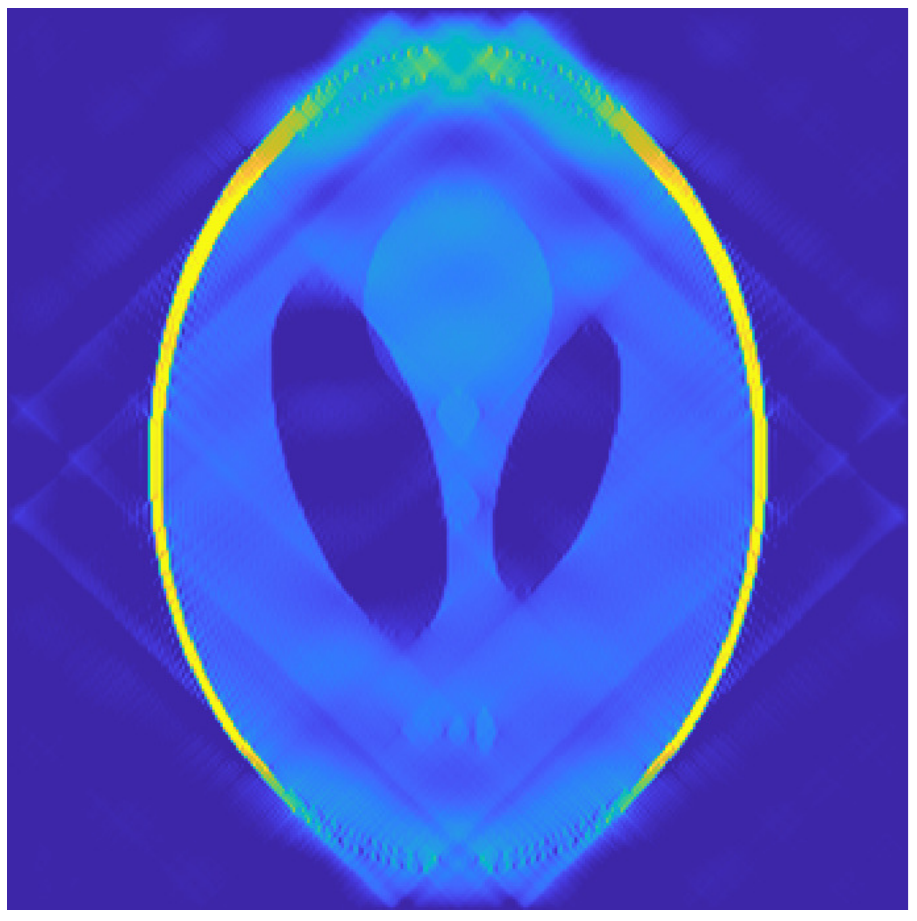}}
\put(160,160){\includegraphics[width=4.5cm]{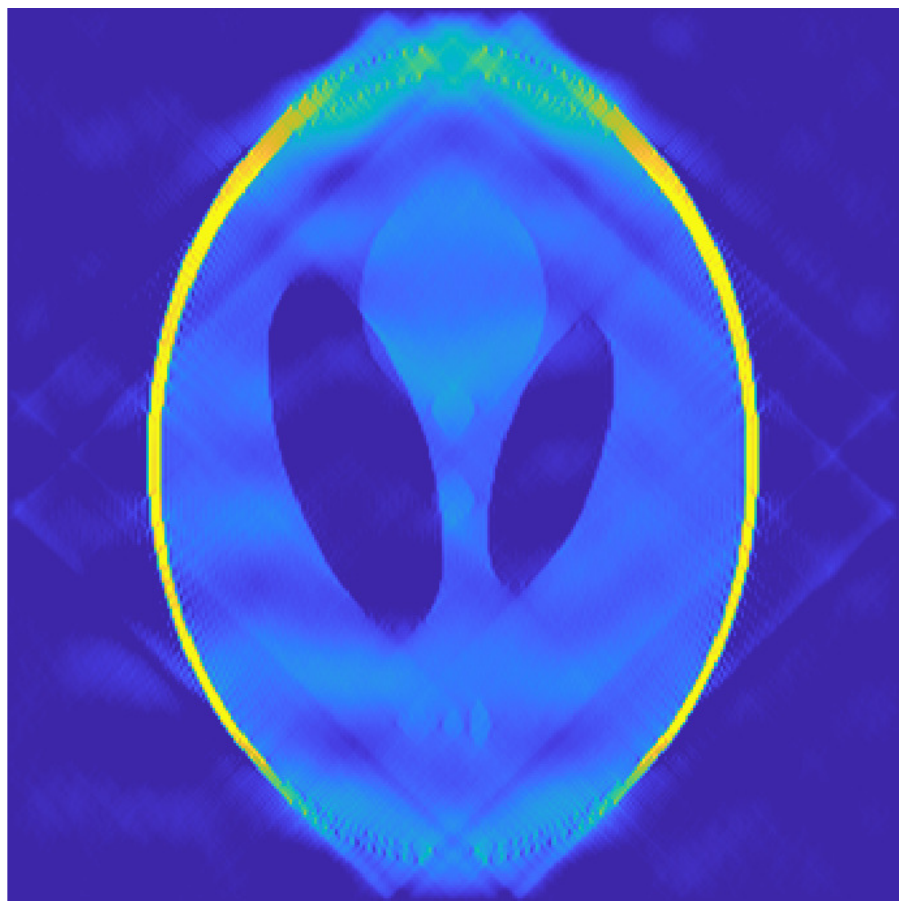}}
\put(300,160){\includegraphics[width=4.5cm]{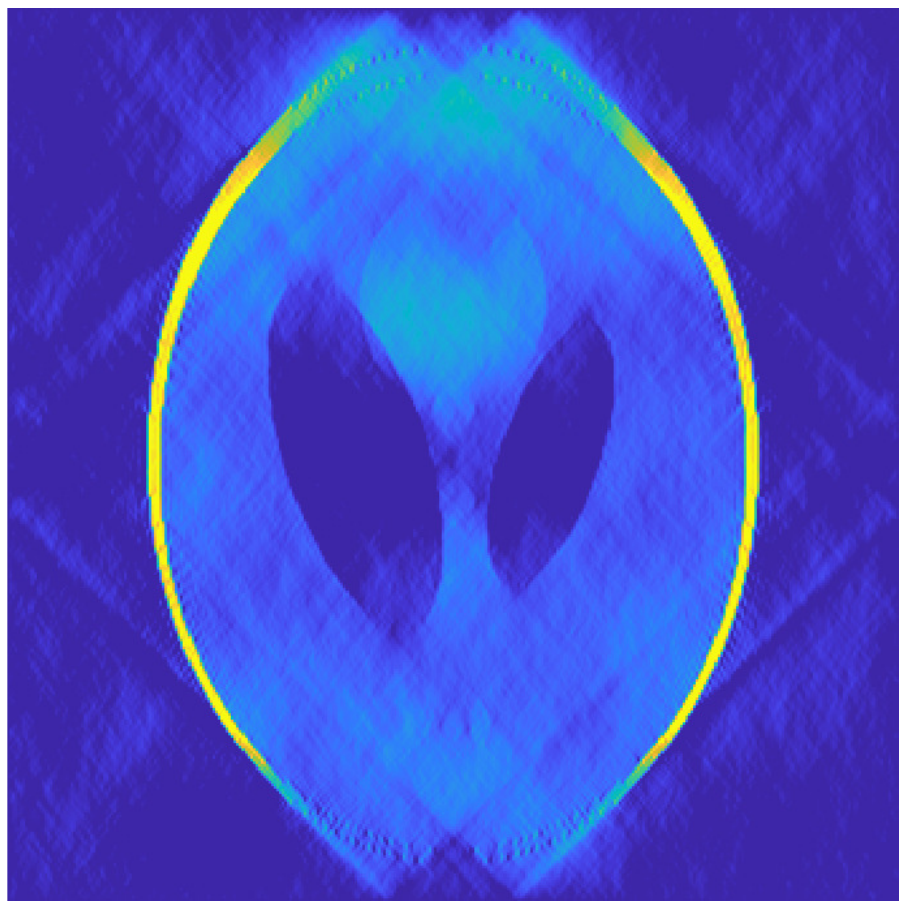}}
\put(20,20){\includegraphics[width=4.5cm]{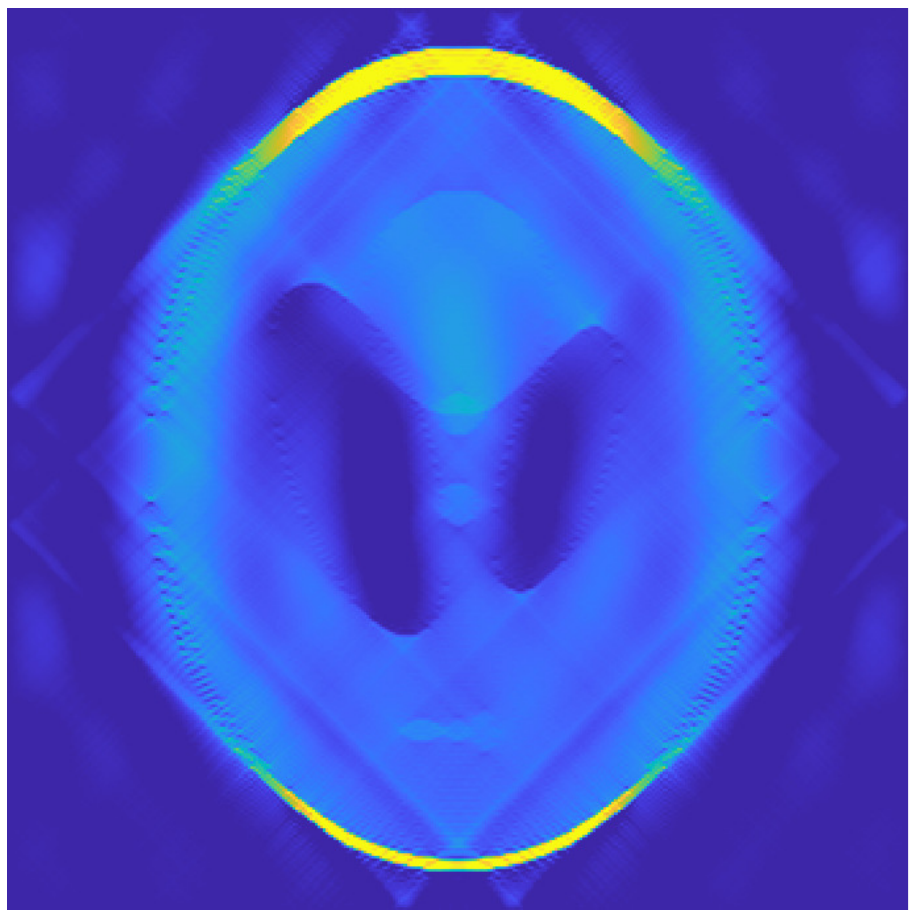}}
\put(160,20){\includegraphics[width=4.5cm]{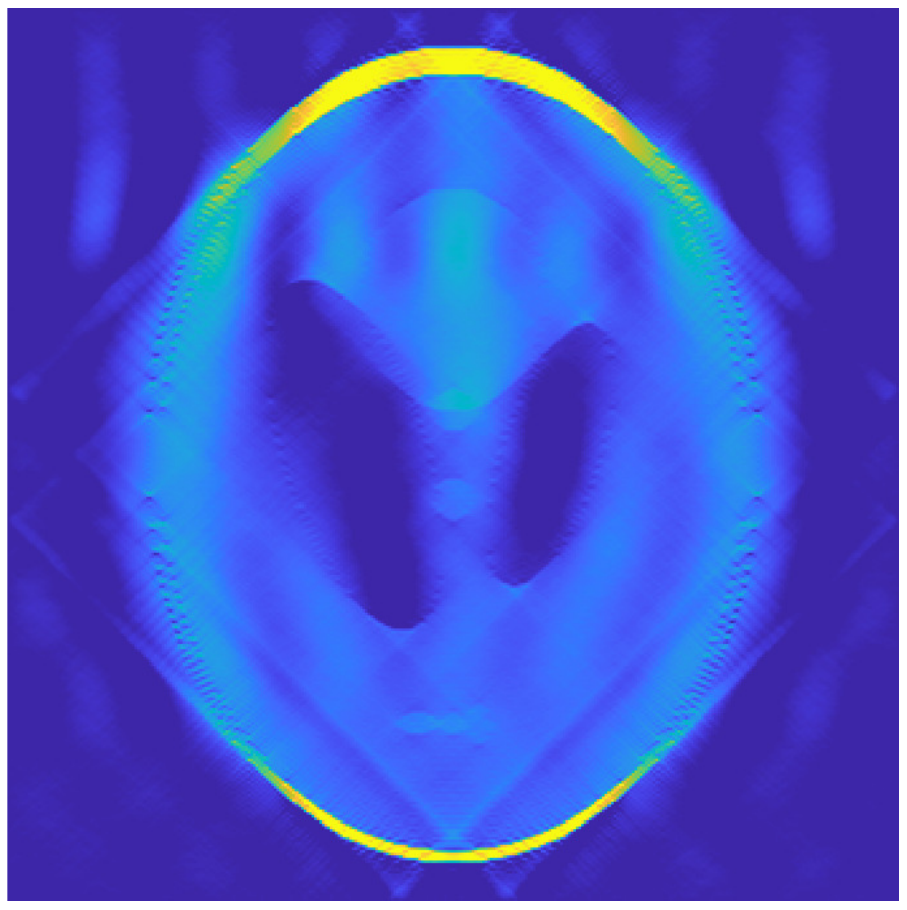}}
\put(300,20){\includegraphics[width=4.5cm]{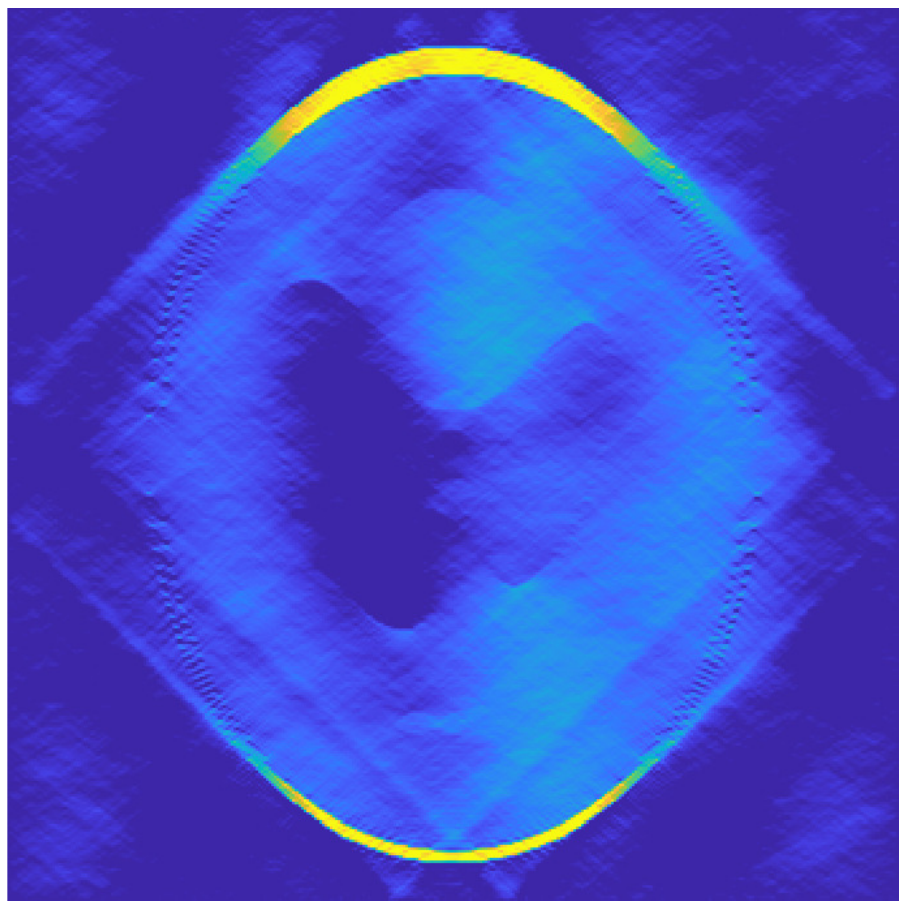}}
\put(388,294){\includegraphics[width=1.64cm]{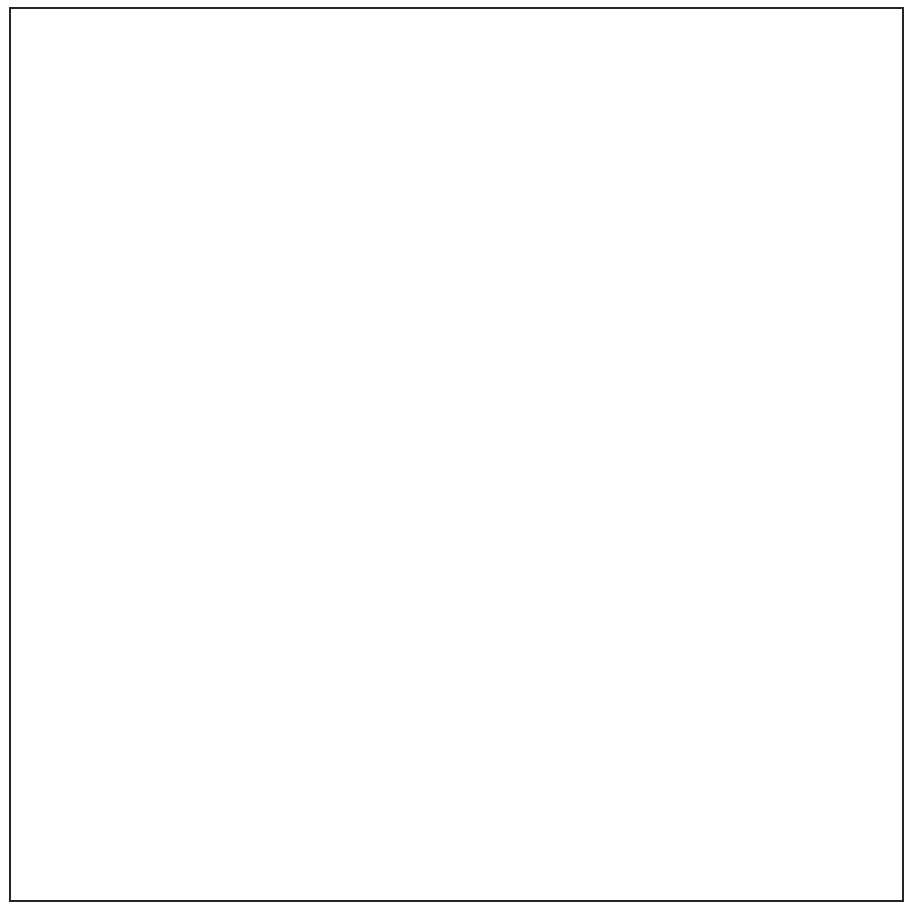}}
\put(248,294){\includegraphics[width=1.64cm]{images/schemeW.pdf}}
\put(108,294){\includegraphics[width=1.64cm]{images/schemeW.pdf}}
\put(388,154){\includegraphics[width=1.64cm]{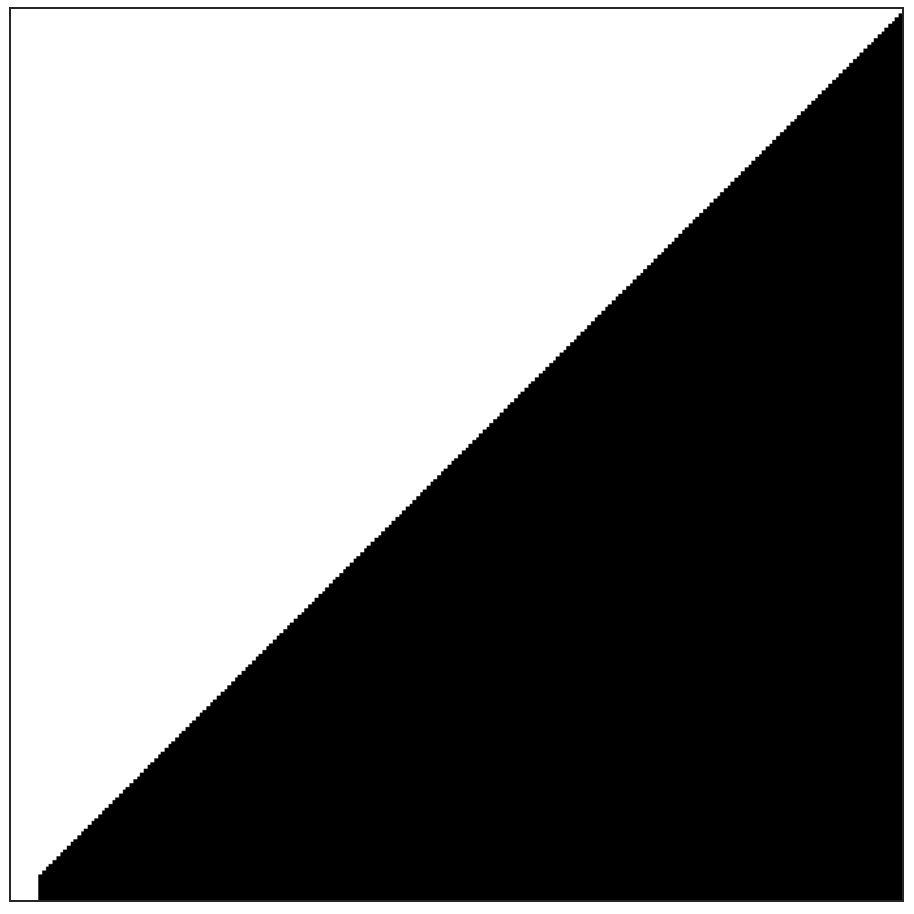}}
\put(248,154){\includegraphics[width=1.64cm]{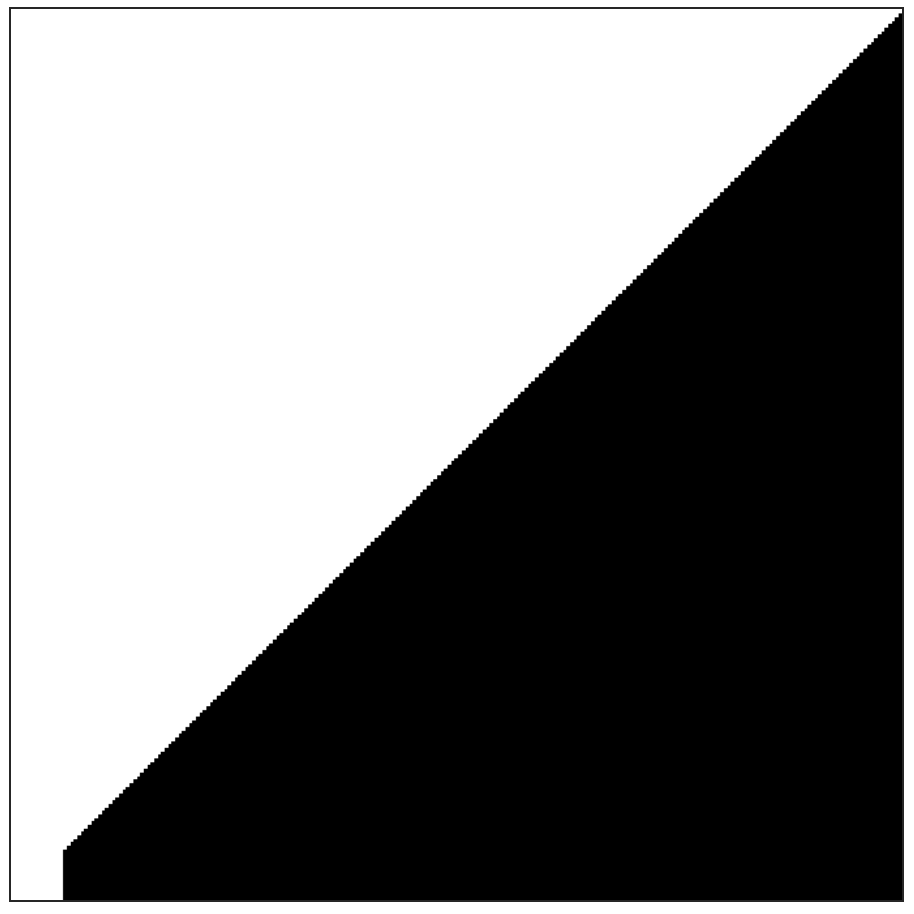}}
\put(108,154){\includegraphics[width=1.64cm]{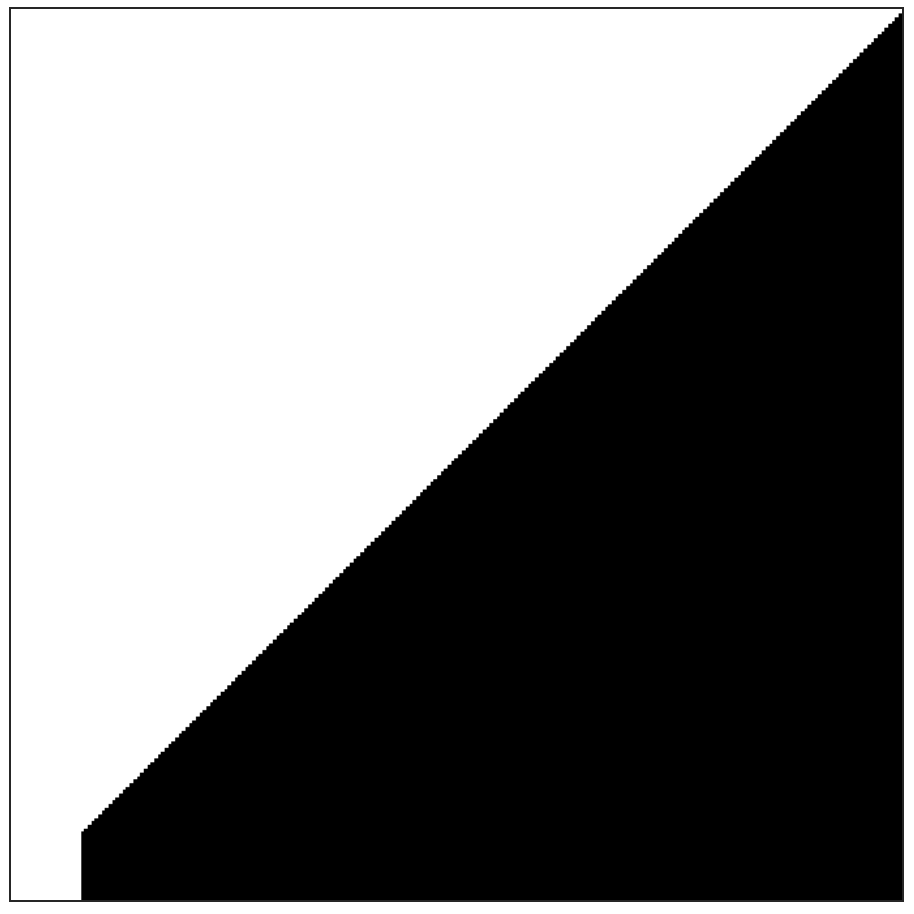}}
\put(388,14){\includegraphics[width=1.64cm]{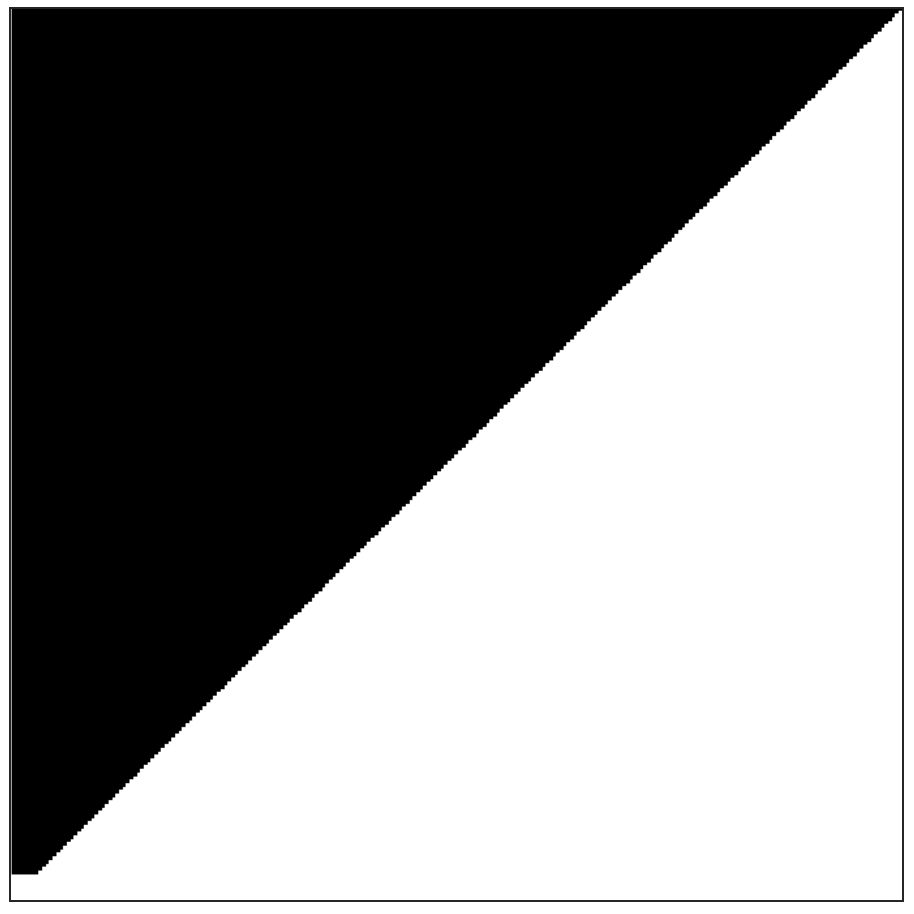}}
\put(248,14){\includegraphics[width=1.64cm]{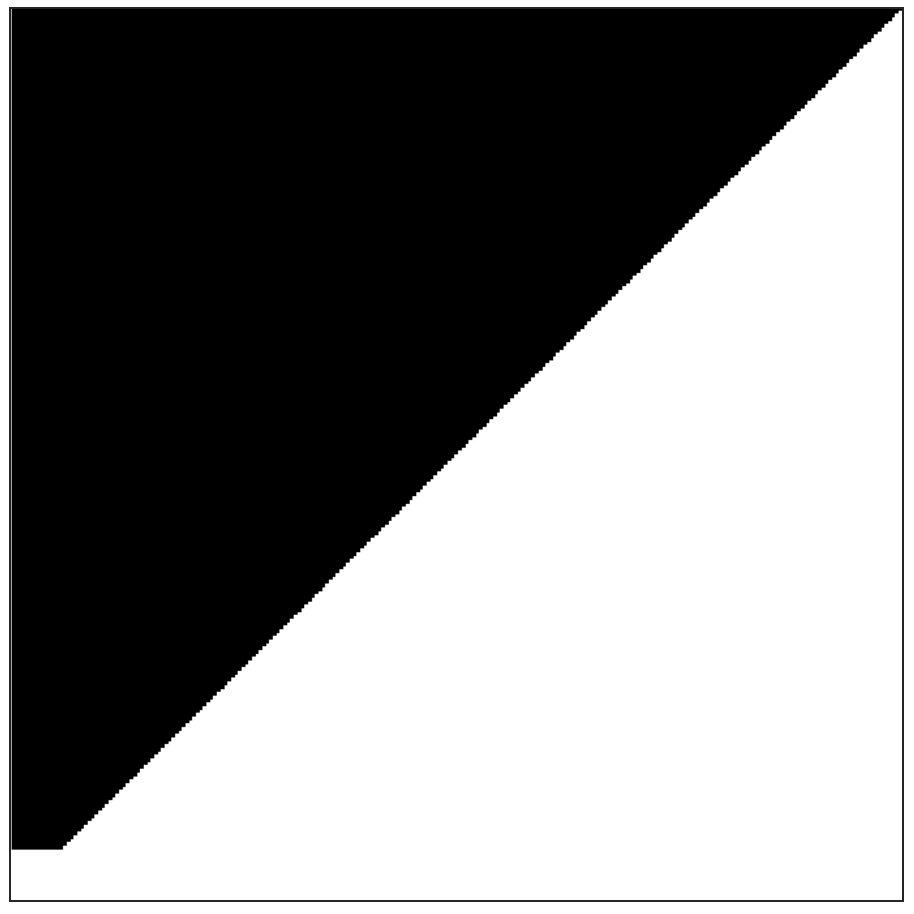}}
\put(108,14){\includegraphics[width=1.64cm]{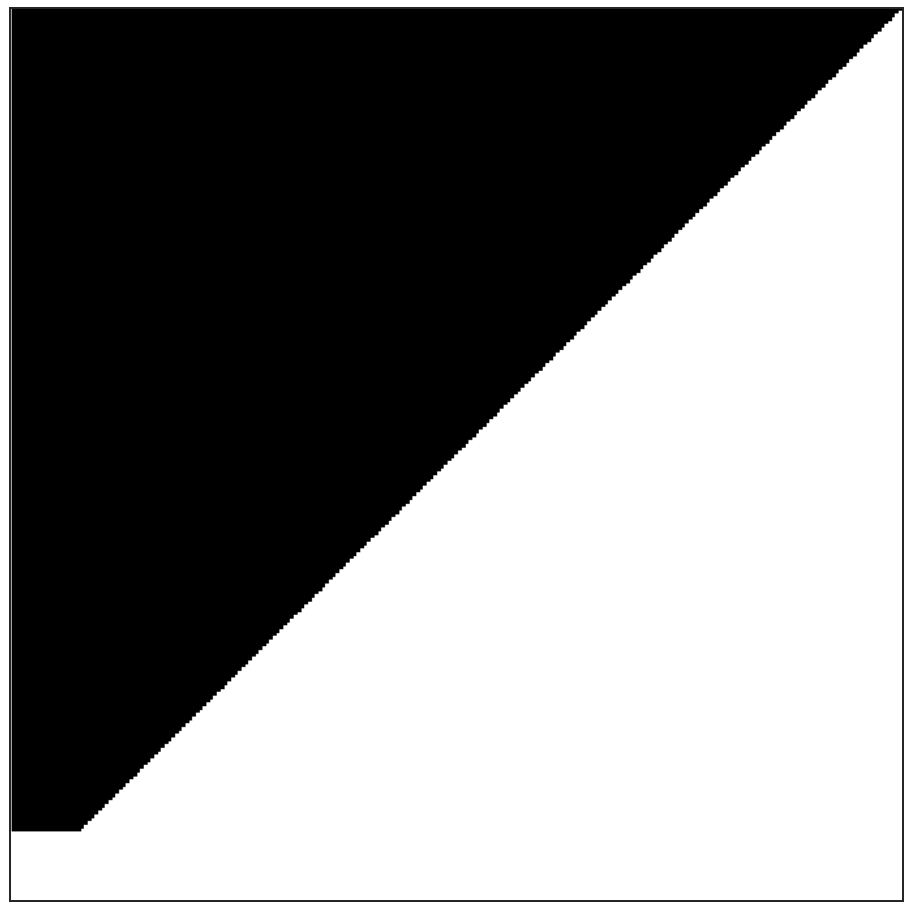}}
\put(-30,360){Two sides}
\put(-30,220){Right side}
\put(-30,80){Top side}
\put(-30,0){Noise level: }
\put(60,0){$0.01\%$}
\put(220,0){$5\%$}
\put(360,0){$25\%$}
\end{picture}
\caption{\label{fig:recfull}Reconstructions from one and two sides for different levels of noise $\delta_h \approx \delta_v$. The measurements are taken until time $T=3$ using $L_{\text{max}} = 256$ trigonometric masks on each side. The bottom-right black-and-white squares represent the generalised Fourier coefficients used for the reconstructions.}
\end{figure}

Here we simulated the wave equation on the unit square until time $T=3$ and took measurements using $L_{\text{max}} = 256$ trigonometric masks $\Phi_l$ on each side. The black-and-white squares at the bottom-right corner of each picture represent the generalised Fourier coefficients used for that reconstruction, as in Figure~\ref{fig:samp}: the white pixels represent the coefficients used. We call it the \textit{sampling square}.

The three reconstructions in the first row are obtained from measurements taken on two sides, the right and the top one, with increasing levels of noise. For these simulations we used all recovered coefficients $(f,\varphi_{n,k})_{L^2(\Omega)}$, where $n,k=1,\dots,256$. This is reflected in the sampling squares, which are completely white.

We note that the reconstructions have a high resolution even with very high levels of noise. One reason for this is the fact that we are extracting only the stable coefficients from each side. Another reason for such a sharp reconstruction is the fact that the main features of the phantom are jump singularities, which are preserved by the measurements and are not particularly affected by Gaussian noise, whose effect is only to blur the background.

In the second and third rows we show reconstructions with measurements taken on a single side. Here we used slightly more than half of the recovered coefficients, as explained in Section~\ref{sub:rec1}. More precisely, in addition to the stable half of the coefficients, we used those  with $(n,k) \in\{1,\dots,20\}^2$ in the case of $0.01\%$ noise, $(n,k) \in\{1,\dots,15\}^2$ for $5\%$ noise and $(n,k) \in\{1,\dots,8\}^2$ for $25\%$ noise. This can be seen by looking at the sampling square of each figure closely.
We clearly see how the wave front set of the phantom influences the reconstruction quality. The edges with normal vector parallel to the side of the measurements are poorly reconstructed, while the others are recovered also with high level of noise. This is a well-known issue in tomography.

\begin{figure}
\begin{picture}(300,120)
\put(60,20){\includegraphics[width=4.5cm]{images/topNoise0dot05214J256K256T3N21214p15.pdf}}
\put(240,20){\includegraphics[width=4.5cm]{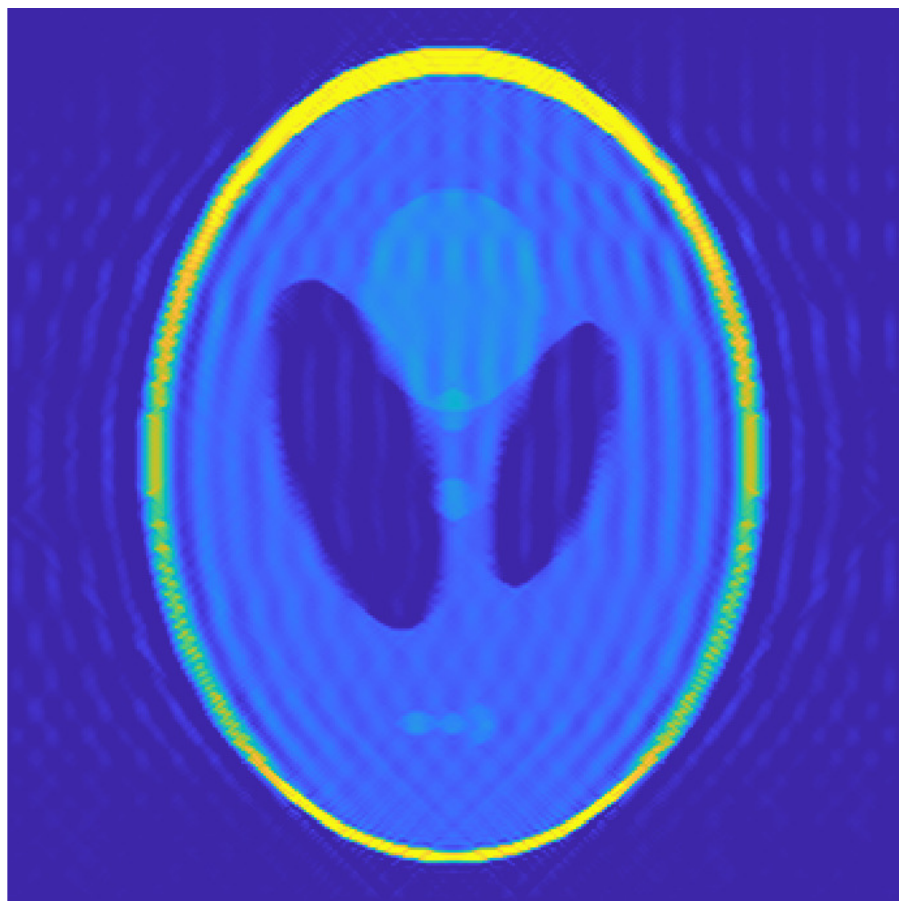}}
\put(148,14){\includegraphics[width=1.64cm]{images/schemeT15.pdf}}
\put(328,14){\includegraphics[width=1.64cm]{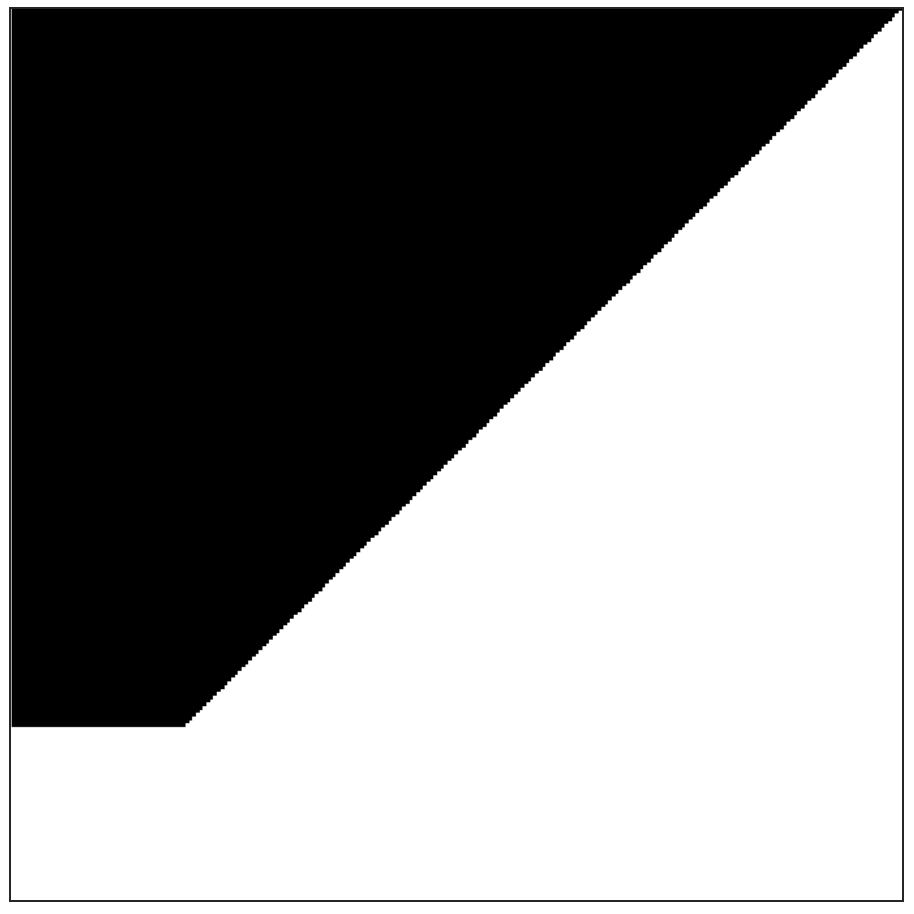}}
\put(0,0){Final time: }
\put(110,0){$ T = 3$}
\put(290,0){$T = 10$}
\end{picture}
\caption{\label{fig:recstab}Reconstructions from measurements with $5\%$ noise $\delta_h$, acquired from the top side and with different measurement times. The measurements are done until time $T=3$ on the left and until $T=10$ on the right. In both cases we use $256$ trigonometric masks and we are able to stably recover the coefficients $(f,\varphi_{n,k})_{L^2(\Omega)}$ with $k \leq n$. In the left reconstruction the coefficients with $n\leq k \leq 15$ are also stably computed, while in the right one we recover those up to $n\leq k \leq 50$.}
\end{figure}

In Figure~\ref{fig:recstab} we show two reconstructions from one side - the top one - with measurements taken on different time intervals. The measurements are also affected by $5\%$ additive Gaussian noise $\delta_h$. The final time for the left image is $T=3$, while on the right it is $T=10$. In both cases, we measure scalar products against trigonometric masks $\Phi_\jj$ with $\jj = 1,\dots,256$. As mentioned previously in Section~\ref{sub:rec1}, we are unable to stably recover all coefficients $(f,\varphi_{n,k})_{L^2(\Omega)}$ for $n,k = 1,\dots, 256$, but only a subset of them depending on the final time $T$ (through the parameter $\bar\jj$). We clearly see that a longer measurement time allows for a much sharper reconstruction. Namely, in the case $T=3$ we are able to recover coefficients with $(n,k) \in\{1,\dots,15\}^2$, while for $T=10$ we can push up to $(n,k) \in\{1,\dots,50\}^2$. In both cases we stably recover also the coefficients with $k \leq n$, as in Figure~\ref{fig:recfull}. Note that the artifacts appearing in the case $T=10$ are mostly due to the Gibbs phenomenon.  These reconstructions corroborate the theoretical findings obtained in Section~\ref{sub:2Doneside}.

\subsubsection{CS-PAT with TV regularisation}\label{sub:cs-pat-tv}

In Figure~\ref{fig:recCS} we present two different compressed sensing reconstructions. The aim of this comparison is to show that the specific subsampling pattern in the frequency domain for the coefficients of the DST arising from PAT measurements is comparable to other subsampling patterns, when it comes to solve the convex optimisation problem \eqref{eq:TVmin}.

In this figure, the coefficients of the DST are directly computed from the phantom, and not from the PAT measurements. In both reconstructions, we consider two-level subsampling schemes: for the first one, inspired by PAT measurements, we fully sample the frequencies for which at least one of the indices $n$ or $k$ is smaller than $10$ and then use a non-uniform log-sampling scheme (\cite{Alberti2017}) for the higher frequencies. For the second sampling scheme, first we fully sample the frequencies $(n,k) \in \{1,\dots,30\}^2$ and then use a quadratic sampling scheme for the higher frequencies. The first row shows the structured sampling pattern coming from the PAT model, while the second row shows the quadratic sampling pattern. In both cases we obtain a perfect reconstruction of the phantom. In order to achieve exact reconstruction, we had to sample $20\%$ of the frequencies in the first case, while in the second one only $8\%$ were enough. From the point of view of the masks $\{\Phi_l\}_l$, in the first row we only used $13\%$ out of the $L_{\text{max}}=256$ on each side. We called minimum energy solution the one obtained from the partial sum \eqref{eq:psum} by setting to zero the unknown coefficients.

\begin{figure}
\begin{picture}(300,260)
\put(0,160){\includegraphics[width=4.5cm]{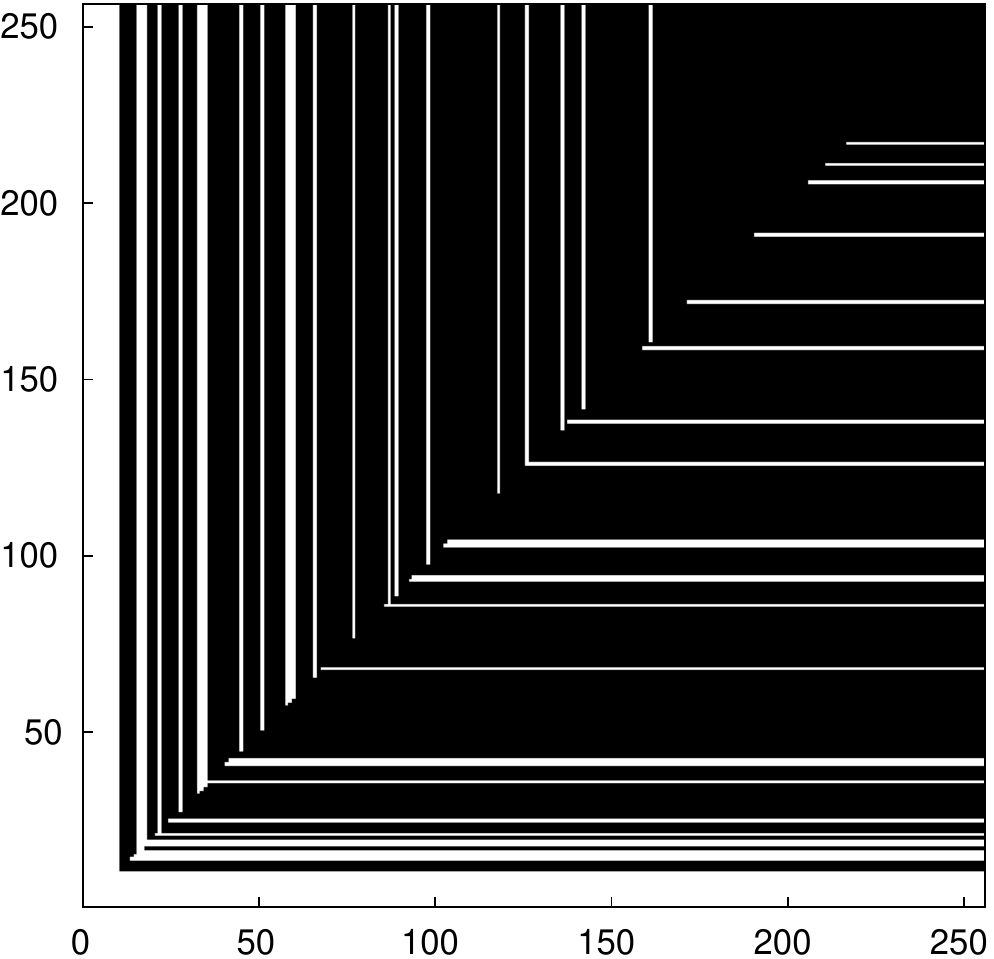}}
\put(150,160){\includegraphics[width=4.5cm]{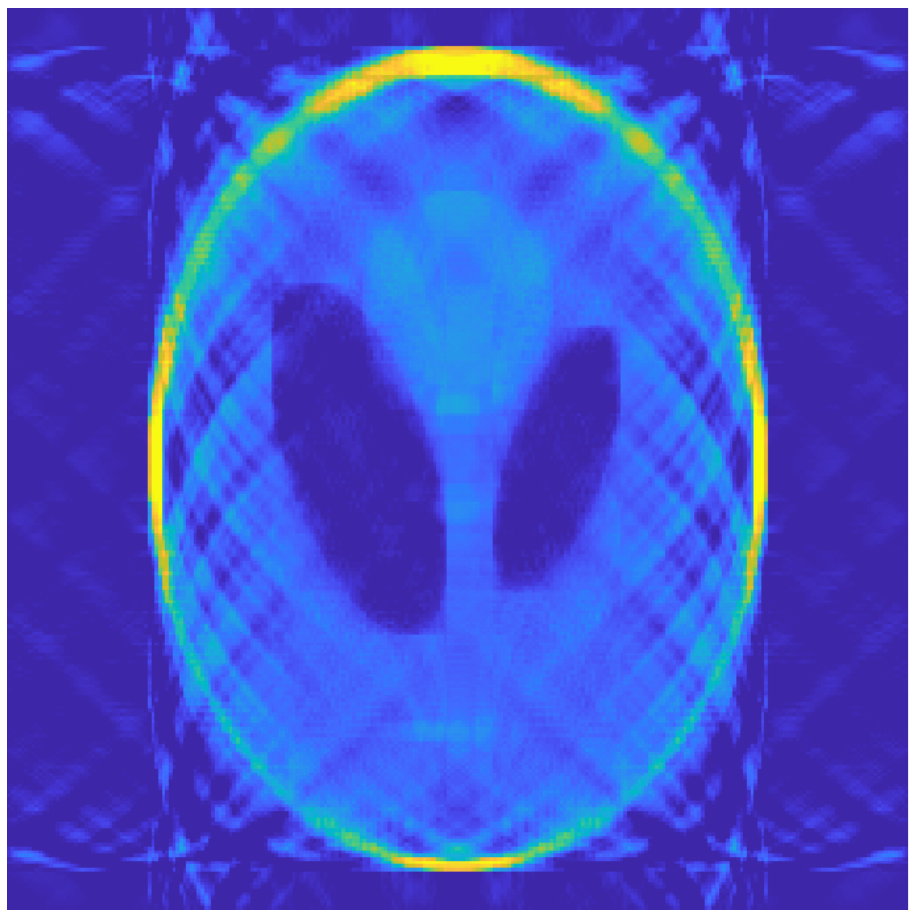}}
\put(300,160){\includegraphics[width=4.5cm]{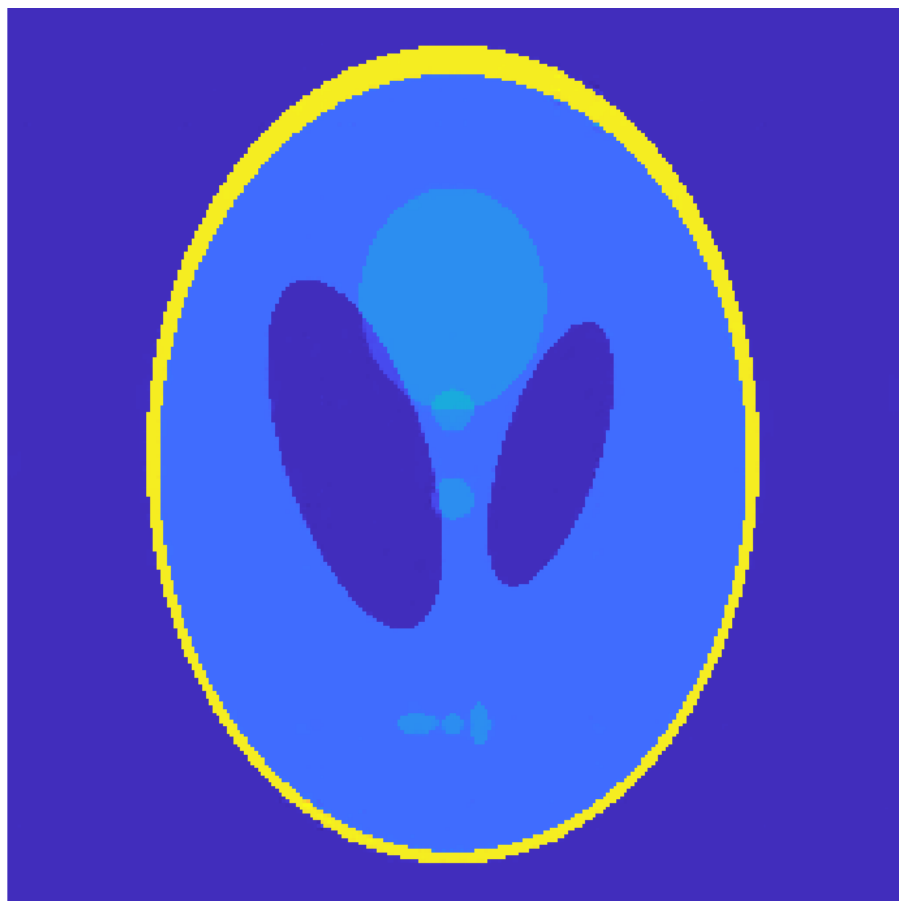}}
\put(0,20){\includegraphics[width=4.5cm]{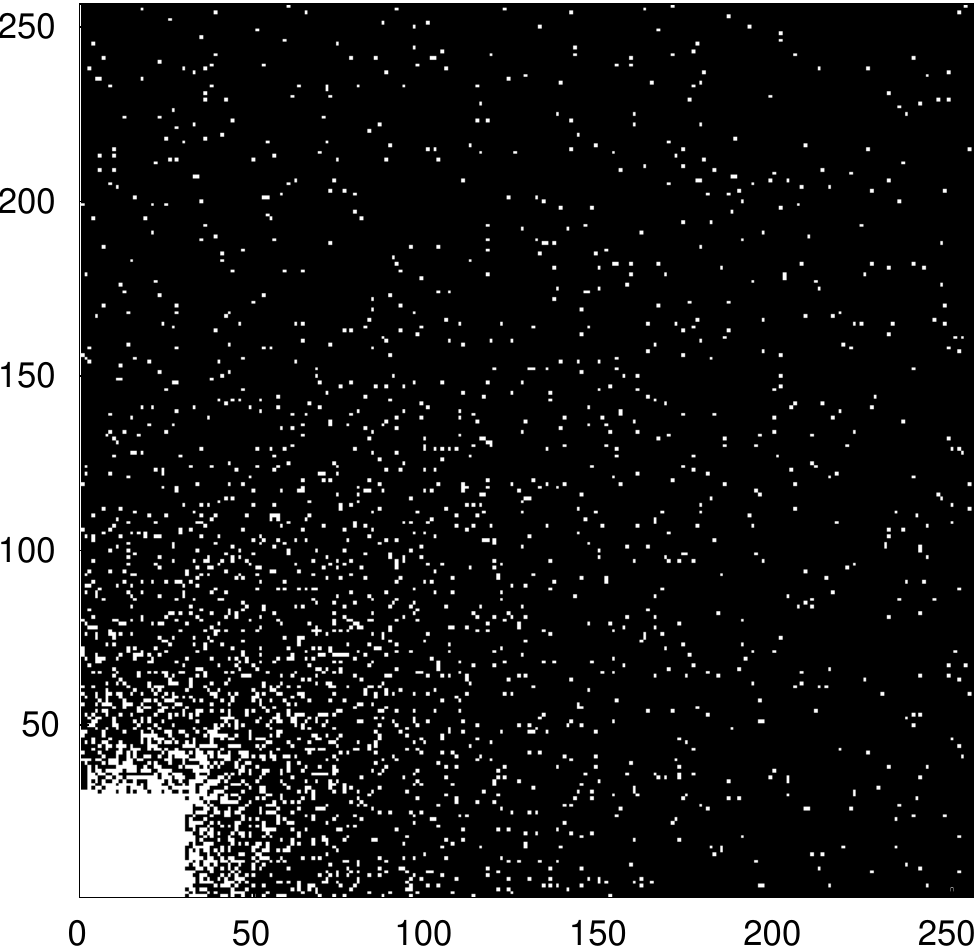}}
\put(150,20){\includegraphics[width=4.5cm]{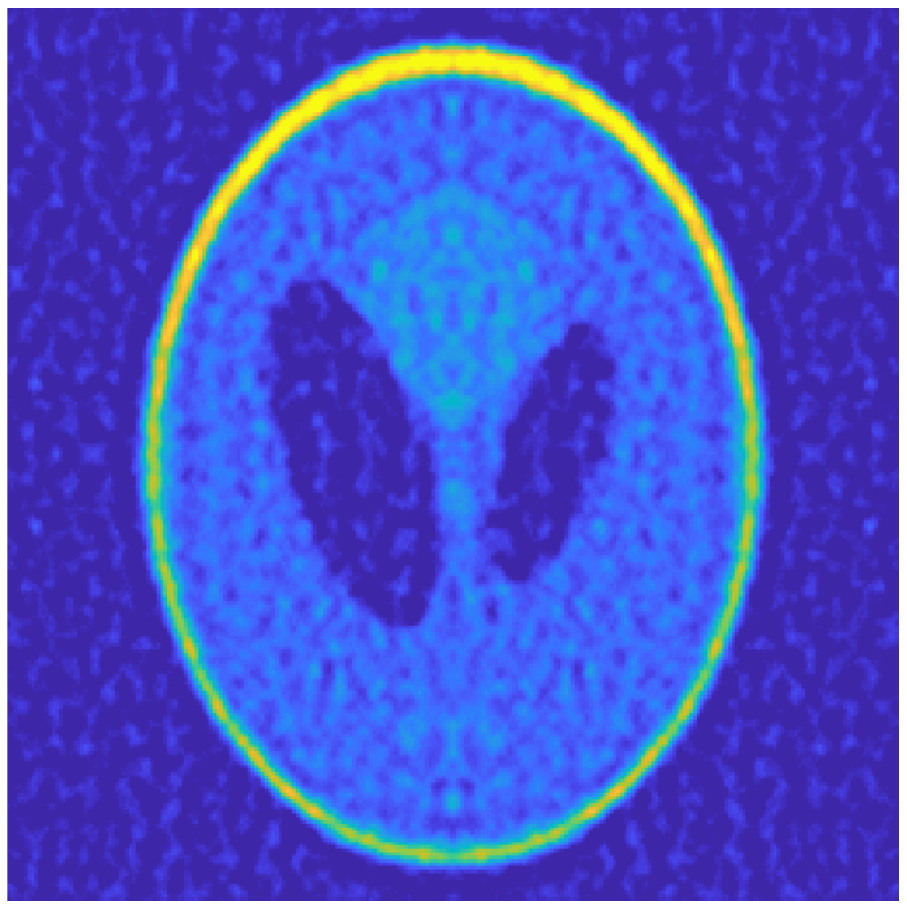}}
\put(300,20){\includegraphics[width=4.5cm]{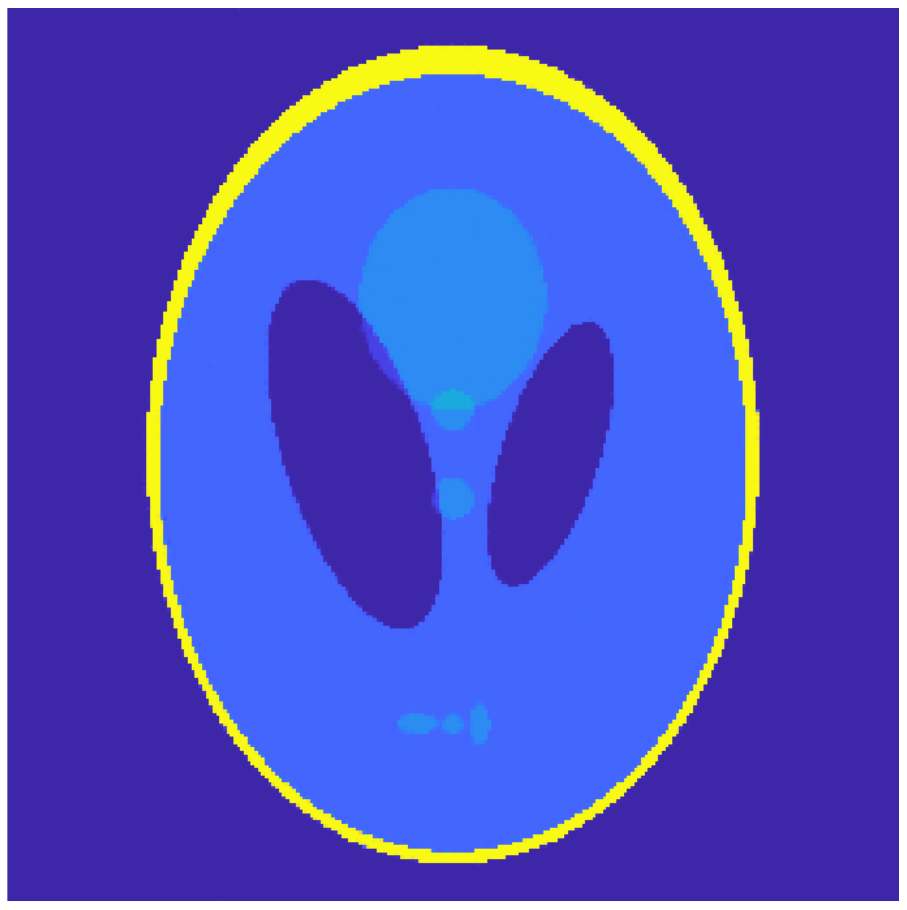}}
\put(30,0){Sampling pattern}
\put(158,0){Minimum energy solution}
\put(315,0){Minimum TV solution}
\end{picture}
\caption{\label{fig:recCS} Comparison of compressed sensing reconstructions. In the first row, the sampling pattern in the frequency domain for the coefficients of the DST comes from the PAT model, while in the second one we use a different sampling pattern. In both cases the coefficients are directly computed from the phantom and not by solving the wave equation. In the second column we show the reconstruction obtained by setting the unknown  coefficients to zero. The last column shows the solution obtained by solving the convex optimisation problem \eqref{eq:TVmin}. In both cases we obtain exact reconstruction. The first row uses $20\%$ of the DST coefficients, corresponding to $13\%$ of the masks $\{\Phi_l\}_l$, while the second row uses $8\%$ of them.}
\end{figure}

In Figure~\ref{fig:recCSPAT} we present compressed sensing reconstructions where the coefficients of the DST have been obtained from PAT measurements as explained in Section~\ref{sub:rec1}. Since the PAT measurements are an approximation of the DST, we had to increase the number of coefficients used in order to have high resolution reconstruction, resulting in a $25\%$ subsampling in the frequency domain - which corresponds to using $18\%$ of the masks $\{\Phi_l\}_l$ for each side. More precisely, we fully sample $(n,k)$ for $n = 1,\dots,16$, $k=1,\dots,256$ and $n = 1,\dots,256$, $k=1,\dots,16$. For higher frequencies we randomly select $30$ horizontal and vertical half-lines. Note that the reconstruction quality, both with and without noise, is comparable to the quality obtained using all coefficients as in Figure~\ref{fig:recfull}.

\begin{figure}
\begin{picture}(300,140)
\put(0,20){\includegraphics[width=4.5cm]{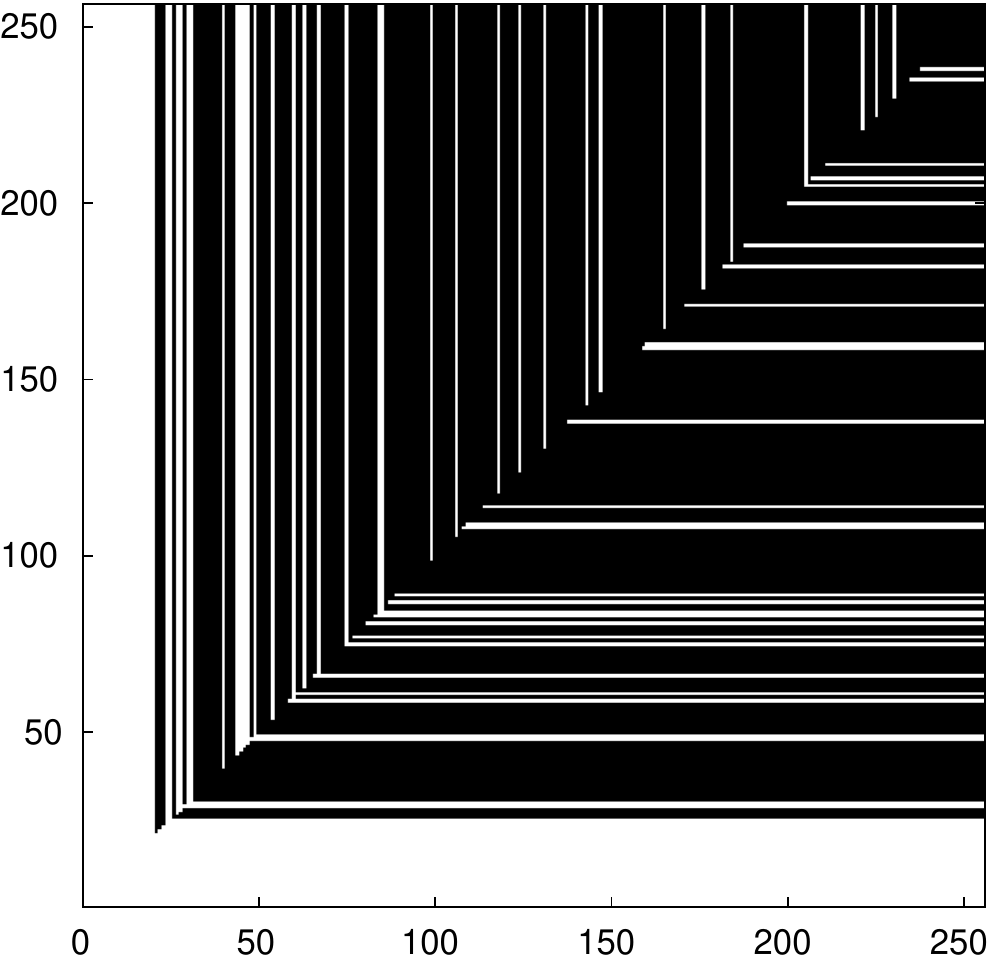}}
\put(150,20){\includegraphics[width=4.5cm]{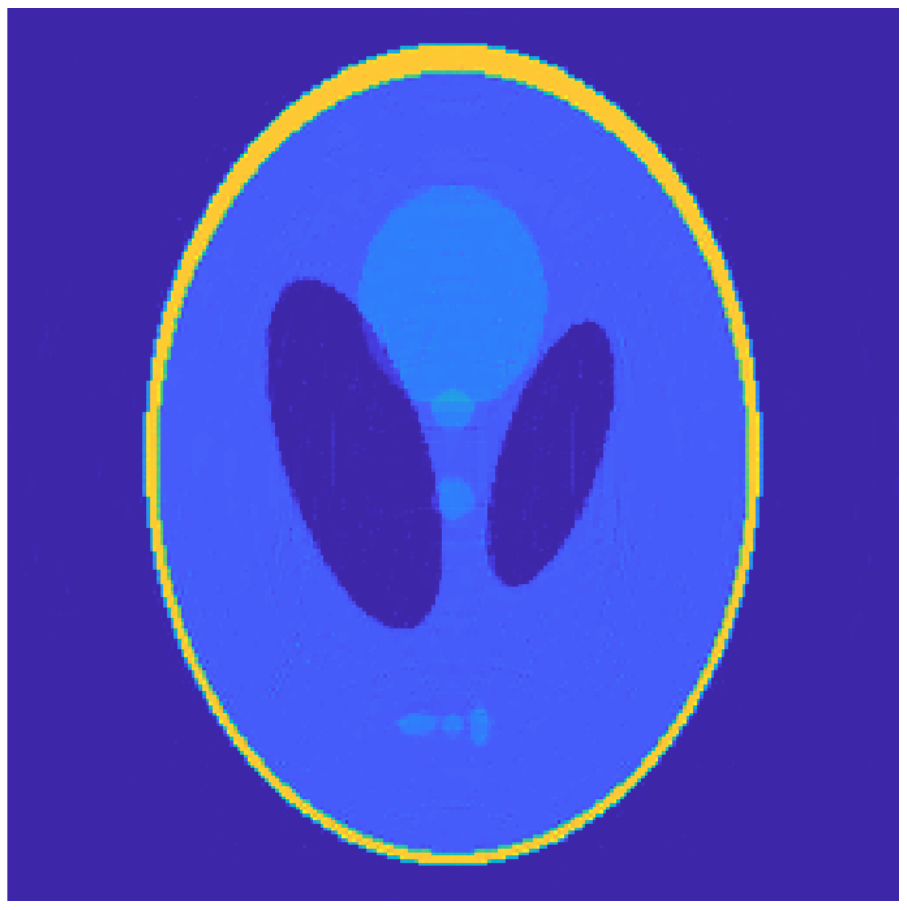}}
\put(300,20){\includegraphics[width=4.5cm]{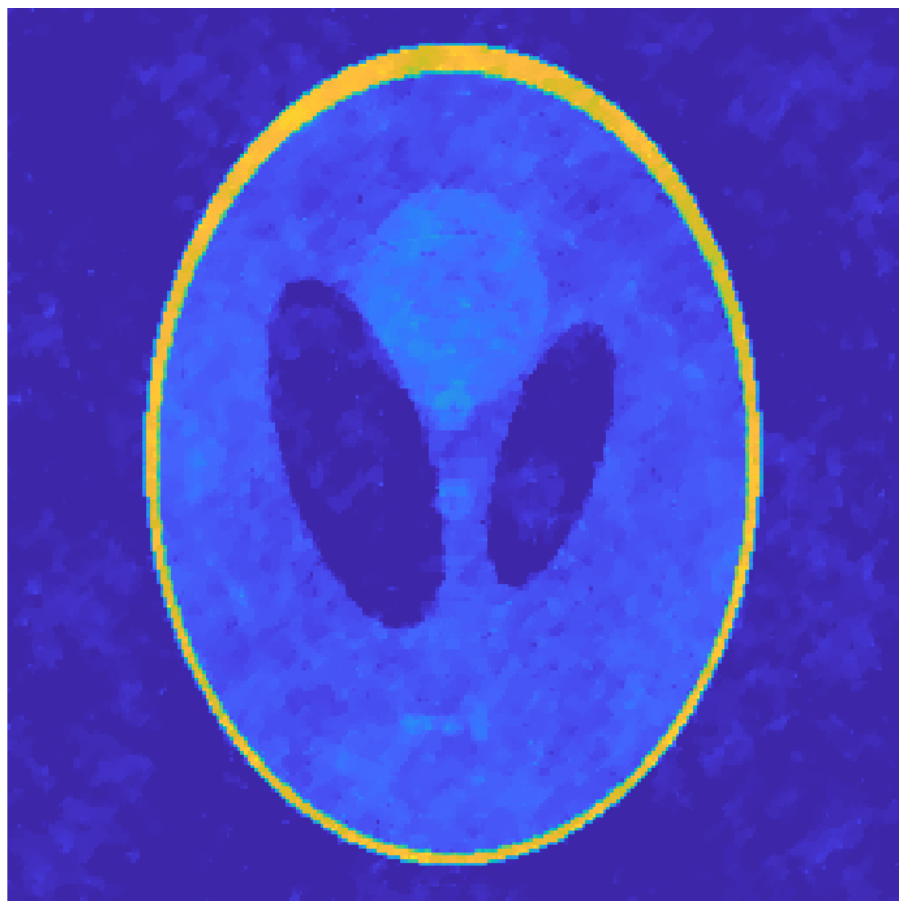}}
\put(30,0){Sampling pattern}
\put(195,0){No noise}
\put(342,0){$25\%$ noise}
\end{picture}
\caption{\label{fig:recCSPAT} Compressed sensing reconstruction from PAT measurements. Left: $25\%$ subsampling pattern in the frequency domain, corresponding to $18\%$ of the masks $\{\Phi_l\}_l$. Center: reconstruction via TV minimisation from noiseless data. Right: reconstruction via TV minimisation with $25\%$ additive Gaussian noise in the PAT data.}
\end{figure}

\section{Conclusions}
In this paper, we have shown for the first time that the approaches based on CS for PAT may be theoretically justified by reducing the inverse problem to a CS problem for undersampled generalised Fourier measurements. This is achieved by a suitable inversion in the time domain applied to the compressed data. The results in this paper are preliminary and open the way for several future directions. We discuss three open questions below.
\begin{itemize}
\item The approach based on this work makes use of the explicit geometries of the domains considered, both in the free-space model and in the bounded-domain model. Even if these domains are of practical relevance and many works on PAT have considered special geometries, it would be interesting to investigate whether it is possible to extend the results to more general domains.
\item The CS problems for undersampled Fourier measurements that arise are peculiar in the sense that the subsampling patterns are not fully random but have a particular structure. In a particular case, these were studied in Section~\ref{sec:CS}. In finite dimension, CS results with structured sampling patterns have been studied in \cite{boyer2019compressed,adcockbrugia2020}. However, we are not aware of any work in our structured infinite-dimensional setting. A thorough theoretical and numerical analysis of these CS problems goes beyond the scopes of this work, and is left for future research.
\item We provided numerical evidence for the uniform stability of the reconstruction problems in the case of the 2D square with measurements on two adjacent sides. However, we were not able to prove this. The rigorous proof of the stability estimate given in \eqref{eq:conjecture} is an open question and we leave it for future work.
\end{itemize}

\section*{Acknowledgments}
This work has been carried out at the Machine Learning Genoa (MaLGa) center, Universit\`a di Genova (IT). This material is based upon work supported by the Air Force Office of Scientific Research under award number FA8655-20-1-7027. GSA and MS are members of the ``Gruppo Nazionale per l'Analisi Matematica, la Probabilit\`a e le loro Applicazioni'' (GNAMPA), of the ``Istituto Nazionale per l'Alta Matematica'' (INdAM). GSA is supported by a UniGe starting grant ``curiosity driven''. PC is member of the ``Cantab Capital Institute for the Mathematics of Information" (CCIMI).

\bibliographystyle{plain}
\bibliography{CS_PAT}

\appendix

\section{Proof of   Theorem~\ref{thm:3Ddisk_free}}\label{sec:3Dball|}

The proof is based on the following lemma.
\begin{Lemma}
Let $\xi=|\xi| e_\xi \in \mathbb{R}^3$. For every $l \in \N$ and  $m \in \{-l,...,l\}$ the following identity holds:
\begin{equation}\label{eq:lemma_3d}
\int_{\mathbb{S}^{2}}Y_l^m(x) e^{2\pi i \rho x \cdot \xi} d\sigma(x) = 4\pi i^{l-2m} \, j_l(2\pi\rho|\xi|) \, Y_l^m(e_\xi) .
\end{equation}
\end{Lemma}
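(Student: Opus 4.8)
The plan is to recognise this identity as the expansion of a plane wave into spherical harmonics (the Rayleigh--Gegenbauer expansion) and to derive it from the Funk--Hecke theorem on $\mathbb{S}^2$. First I would write $x\cdot\xi=|\xi|\,(x\cdot e_\xi)$, so that the integrand equals $K(x\cdot e_\xi)\,Y_l^m(x)$ with the ``plane-wave kernel'' $K(t)=e^{2\pi i\rho|\xi|\,t}$. Since $Y_l^m$ is a spherical harmonic of degree $l$, the Funk--Hecke theorem yields
\[
\int_{\mathbb{S}^2}K(x\cdot e_\xi)\,Y_l^m(x)\,d\sigma(x)=\lambda_l\,Y_l^m(e_\xi),\qquad \lambda_l=2\pi\int_{-1}^{1}K(t)\,P_l(t)\,dt ,
\]
where $P_l$ is the Legendre polynomial normalised by $P_l(1)=1$; this is the step that makes the \emph{same} harmonic $Y_l^m$ reappear on the right-hand side and that produces the constant $2\pi=|\mathbb{S}^1|$. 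Then I would evaluate the one-dimensional integral by means of the classical identity $\int_{-1}^{1}e^{iat}P_l(t)\,dt=2\,i^{l}\,j_l(a)$ (a Poisson--Gegenbauer representation of the spherical Bessel function; see, e.g., \cite{temme-1996}), specialised to $a=2\pi\rho|\xi|$, obtaining $\lambda_l=4\pi i^{l}\,j_l(2\pi\rho|\xi|)$; combining the two steps gives the assertion.

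A more self-contained alternative, closer in spirit to the proof of Lemma~\ref{lem:2d}, is to exploit rotational covariance: both sides transform in the same way under rotations $A\in\mathrm{SO}(3)$ acting on $\xi$ (via $Y_l^m(Ax)=\sum_{m'}D^l_{m'm}(A)\,Y_l^{m'}(x)$), so it suffices to verify the identity when $e_\xi$ is the north pole. In that case the azimuthal integral $\int_0^{2\pi}e^{im\varphi}\,d\varphi$ makes the contribution vanish for $m\neq0$ (consistently with $Y_l^m$ vanishing at the pole for $m\neq0$), while for $m=0$ one is left exactly with $\int_{-1}^{1}e^{2\pi i\rho|\xi|t}P_l(t)\,dt=2i^l j_l(2\pi\rho|\xi|)$, and the general direction $e_\xi$ then follows by transporting both sides with a suitable rotation.

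The only genuinely delicate point — the main obstacle — is the bookkeeping of the phase: the computation above produces $i^{l}$, and matching it with the stated $i^{l-2m}=(-1)^m i^{l}$ requires tracking consistently the normalisation of the associated Legendre functions $P_l^m$, the presence or absence of the Condon--Shortley factor $(-1)^m$ in the definition of $Y_l^m$, and the reality relation $\overline{Y_l^m}\propto Y_l^{-m}$. This ambiguity is, however, harmless in two senses: the constant $\lambda_l$ does not depend on which degree-$l$ harmonic one feeds into Funk--Hecke, so it is pinned down by a single explicit evaluation; and when the lemma is substituted into the proof of Theorem~\ref{thm:3Ddisk_free}, the factor $i^{l-2m}$ (which occurs squared) combines with the parity factor $(-1)^l$ coming from $Y_l^m(-\omega)=(-1)^l Y_l^m(\omega)$ to yield $1$, so the precise form of the phase does not affect the final expression for the probing function $\psi_{l,m,\rho}$.
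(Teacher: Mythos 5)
Your proof is correct, and it takes a genuinely different route from the paper's. The paper computes the surface integral directly in spherical coordinates: the azimuthal integral is evaluated with Lemma~\ref{lem:2d} (producing a factor $J_m$), and the remaining polar integral --- an associated Legendre function times a Bessel function times an exponential --- is disposed of by citing a nontrivial closed-form identity from \cite{Neves2006}. You instead invoke the Funk--Hecke theorem to reduce everything to the one-dimensional integral $\int_{-1}^{1}e^{iat}P_l(t)\,dt=2i^l j_l(a)$, i.e.\ to the classical Rayleigh plane-wave expansion. Your route is shorter, avoids the external special-function integral, and is manifestly independent of the normalisation of $Y_l^m$, since the \emph{same} harmonic that is integrated reappears on the right-hand side and the Funk--Hecke eigenvalue $\lambda_l$ depends only on $l$.

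On the phase, you are too cautious. Precisely because Funk--Hecke returns $\lambda_l\,Y_l^m(e_\xi)$ with the identical $Y_l^m$, no choice of normalisation or Condon--Shortley convention for $P_l^m$ can turn your $4\pi i^{l}$ into the stated $4\pi i^{l-2m}=4\pi(-1)^m i^{l}$: the two genuinely differ by $(-1)^m$, and your constant is the correct one. (The discrepancy can be traced to the paper's azimuthal step, which should read $2\pi i^{m}e^{im\varphi_\xi}J_m(\cdot)$ rather than $2\pi i^{-m}e^{im\varphi_\xi}J_m(\cdot)$; check $m=1$ at small argument via Jacobi--Anger.) As you correctly observe, this is immaterial for Theorem~\ref{thm:3Ddisk_free}, where the constant enters squared and $(i^{l-2m})^2=(i^{l})^2=(-1)^l$. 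One small caveat on your second, rotation-covariance argument: at the north pole both sides vanish for $m\neq 0$, so that single evaluation fixes the constant only for $m=0$; to conclude for all $m$ you need, as you implicitly use, the $m$-independence of the eigenvalue, i.e.\ Schur's lemma on the irreducible degree-$l$ subspace.
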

\begin{proof}
 Write $e_\xi = ( \sin\theta_\xi \cos \varphi_\xi, \sin\theta_\xi \sin\varphi_\xi, \cos\theta_\xi) \in \mathbb{S}^2$. By direct calculation:
\begin{equation}\notag
\begin{aligned}
& \int_{\mathbb{S}^{2}} Y_l^m(x) e^{2\pi i \rho x \cdot \xi} d\sigma(x)  \\
& =  \int_0^\pi \int_0^{2\pi} P_l^m(\cos\theta)e^{im \varphi} e^{2\pi i \rho (\xi_1  \sin\theta \cos \varphi + \xi_2 \sin\theta \sin\varphi + \xi_3 \cos\theta)}\sin\theta \, d\varphi \, d\theta  \\
& = \int_0^\pi  P_l^m(\cos\theta) e^{2\pi i \rho \xi_3 \cos\theta} \sin\theta \Big(\int_0^{2\pi} e^{2\pi i \rho \sin\theta (\xi_1, \xi_2) \cdot (\cos\varphi, \sin\varphi) } e^{im \varphi} \, d\varphi \Big) \, d\theta.
\end{aligned}
\end{equation}
Using Lemma \ref{lem:2d}, and noticing that the argument of the vector $(\xi_1,\xi_2)$ is $\phi_\xi$, we can rewrite this integral as
\begin{multline*}
 \int_0^\pi  P_l^m(\cos\theta) e^{2\pi i \rho \xi_3 \cos\theta} \sin\theta \Big(2\pi i^{-m} e^{im \phi_\xi} J_m\big(2\pi \rho \sin\theta |(\xi_1,\xi_2)| \big) \Big) \, d\theta  \\
 = 2 \pi i^{-m}e^{im \varphi_\xi} \int_0^\pi P_l^m(\cos\theta) J_m(2\pi\rho |\xi| \sin\theta \sin\theta_\xi) e^{2\pi i \rho |\xi| \cos\theta_\xi \cos\theta} \sin\theta \, d\theta.
\end{multline*}
Thanks to the main result in \cite{Neves2006}, the previous integral can be expressed in terms of the spherical Bessel function $j_l$:
\begin{equation}\notag
\begin{split}
\int_{\mathbb{S}^{2}} Y_l^m(x) e^{2\pi i \rho x \cdot \xi} d\sigma(x)&=  2\pi i ^{-m} e^{im \varphi_\xi} \Big(2i^{l-m} P^m_l(\cos\theta_\xi)j_l(2\pi\rho|\xi|)\Big) \\
& = 4\pi i^{l-2m} e^{im \varphi_\xi} P^m_l(\cos\theta_\xi)j_l(2\pi\rho|\xi|) \\
& = 4\pi i^{l-2m} \, j_l(2\pi\rho|\xi|) \, Y_l^m(\theta_\xi, \phi_\xi).
\end{split}
\end{equation}
This concludes the proof.
\end{proof}

\begin{proof}[Proof of Theorem~\ref{thm:3Ddisk_free}]
The second part of \eqref{eq:twoorone3} is an immediate consequence of Theorem~\ref{thm:sine}, since $\tilde{\psi}_{l,m,n}$ is a Dirichlet eigenfunction of the Laplacian in the disk with eigenvalue $j_{l+\frac{1}{2},n}^2$ and
\[
\partial_\nu \tilde{\psi}_{l,m,n}=j_{l+\frac{1}{2},n}\, j'_l(j_{l+\frac{1}{2},n})\, \Phi_{l,m}.
\]
It remains to prove the first identity of \eqref{eq:twoorone3}.

By Theorem~\ref{thm:moment} we have
\begin{equation}\label{eq:general_moment23D}
(f, {\psi}_{l,m, \rho})_{L^2(\Omega)} =  \hat g_{l,m}(\rho),
\end{equation}
where, by \eqref{eq:psi1}, we can write for $y\in\Omega$
\begin{equation}\label{eq:2d_disk}
\begin{aligned}
\psi_{l,m,\rho}(y) & =  \frac{\rho^2}{2} \int_{\mathbb{S}^2} \Phi_{l,m}(x)\Bigg( \int_{\mathbb{S}^{2}} e^{2\pi i \rho (x-y) \cdot \omega} d\sigma(\omega) \Bigg) d\sigma(x) \\
& =\frac{\rho^2}{2} \int_{\mathbb{S}^2}e^{-2\pi i \rho y \cdot \omega}\Bigg( \int_{\mathbb{S}^{2}} \Phi_{l,m}(x) e^{2\pi i \rho x \cdot \omega} d\sigma(x) \Bigg) d\sigma(\omega) . \\
\end{aligned}
\end{equation}
We can now explicitly determine $\psi_{l,m,\rho}$. Write $y=|y|e_y \in B_{\R^3}(0,1)$, then by using formula \eqref{eq:lemma_3d} twice we have:
\begin{equation}\notag
\begin{aligned}
\psi_{l,m,\rho}(y) &=   \frac{\rho^2}{2}  \int_{\mathbb{S}^2}e^{-2\pi i \rho y \cdot \omega}\Bigg( \int_{\mathbb{S}^{2}} Y_l^m(x) e^{2\pi i \rho x \cdot \omega} d\sigma(x) \Bigg) d\sigma(\omega)  \\
& = 2\rho^2 \pi i^{l-2m} \ j_l(2\pi\rho) \int_{\mathbb{S}^2} e^{-2\pi i \rho y \cdot \omega} Y_l^m(\omega) d\sigma(\omega)  \\
& = 8\big(\rho\pi i^{l-2m}\big)^2 \ j_l(2\pi\rho) \ j_l(2\pi \rho|y|)\,  \underbrace{Y^m_l(-e_y)}_{= (-1)^l Y^m_l(e_y)}  \\
& = 8 \pi^2\rho^2  j_l(2\pi\rho) \ j_l(2\pi \rho|y|)\,  Y^m_l(e_y).
\end{aligned}
\end{equation}
By \eqref{eq:general_moment23D} and L'H\^opital's rule we have
\[
\begin{split}
\frac{(\hat g_{l,m})'(\rho_{l,m})}{4\pi j_{l+\frac{1}{2},n}^2 \, j_l'(j_{l+\frac{1}{2},n})} &=
 \lim_{\rho \to \rho_{l,n}}\frac{1}{8\pi^2}\frac{\hat g_{l,m}(\rho)}{\rho^2 \, j_l(2\pi\rho)}\\ &
 = \lim_{\rho \to \rho_{l,n}} (f,y\mapsto j_l(2\pi \rho|y|)\,  Y^m_l(e_y))_{L^2(\Omega)} \\ &= (f,\tilde\psi_{l,m,n})_{L^2(\Omega)},
 \end{split}
\]
since $2\pi \rho_{l,n}=j_{l+\frac{1}{2},n}$.
\end{proof}

\section{Riesz Bases}\label{sec:riesz}

In this section, we review some of the basic notions related to Riesz bases of exponentials; for further details, the reader is referred to \cite{Young1981,Christensen2016}. 

\subsection{Riesz Bases in Hilbert spaces}
We start by introducing the concept of Riesz sequence of a Hilbert space, which generalises that of linearly independent set.

\begin{Definition} Let $\H$ be a Hilbert space and $\{f_k\}_{k \in \N} \subseteq \H$.
\begin{enumerate}
\item $\{f_k\}_{k \in \N}$ is a \textit{Riesz sequence} if there exist constants $A,B>0$ such that
\begin{equation}\label{eq:riesz}
A \, \sum_{k \in \N}|c_k|^2 \leq \Big\Vert \sum_{k \in \N}c_k f_k\Big\Vert^2 \leq B \, \sum_{k \in \N}|c_k|^2
\end{equation}
for every finite sequence $\{c_k\}_{k} \subseteq \mathbb{C}^\N$.
\item $\{f_k\}_{k \in \N}$ is a \textit{Riesz basis} if it is a  Riesz sequence and it is \textit{complete}, namely $\overline{\operatorname{span}\{f_k\}}_k = \H$.
\item The optimal bounds such that the Riesz condition \eqref{eq:riesz}  
holds are called \textit{Riesz bounds}.
\end{enumerate}
\end{Definition}
In particular, a Riesz sequence is a Riesz basis for its closed span. If $\{f_k\}_k$ is a Riesz basis, then every element $f \in \H$ can be expressed as a linear combination 
\begin{equation}\label{eq:expansion}
f = \sum_{k\in\N} c_k f_k
\end{equation} 
in a unique way.
If $\{f_k\}_k$ is a Riesz sequence,  its \textit{synthesis operator} is given by
\begin{equation}\notag
T\colon \ell^2(\N) \longrightarrow \H, \qquad \{c_k\} \longmapsto \sum_{k \in \N} c_k f_k.
\end{equation}
The adjoint of the synthesis operator is  the \textit{analysis operator}:
\begin{equation}\notag
T^*\colon \H \longrightarrow \ell^2(\N), \qquad f \longmapsto \{\langle f, f_k\rangle\}_{k \in \N}.
\end{equation}
By using \eqref{eq:riesz}, it can be easily proven that
\begin{equation}\notag
\norm{T}=\norm{T^*} \leq B^{\frac{1}{2}},\qquad \norm{T^{-1}}\le A^{-\frac{1}{2}}.
\end{equation}
The synthesis operator is central in the study of basis-like properties of the set $\{f_k\}_k$. In fact, its surjectivity is related to the possibility of expanding elements in $\H$ as a combination of the $\{f_k\}_k$, while its injectivity is linked to the uniqueness of such expansions.

We can compose the synthesis and analysis operators $T$ and $T^*$  to obtain the \textit{frame operator}  $S:=T T^*$
\begin{equation}\notag
S= T T^*\colon \H \longrightarrow \H, \quad f \longmapsto \sum_{n \in \N} \langle f, f_k\rangle f_k,
\end{equation}
which may be used to recover the unique coefficients $\{c_k\}_k$ in the expansion \eqref{eq:expansion}.

\begin{Proposition}[\cite{Christensen2016}]\label{prop:rec}
Let $\{f_k\}_{k \in \N} \subseteq \H$ be a Riesz sequence with frame operator $S$ and let 
\begin{equation}
f = \sum_{k \in \N} c_k \, f_k \in \overline{\operatorname{span}\{f_k\}_k}
\end{equation}
be an arbitrary element in $\overline{\operatorname{span}\{f_k\}_k}$. Then
\begin{equation}
c_k = \langle f, S^{-1}f_k \rangle, \qquad k \in \N,
\end{equation}
where $\{S^{-1}f_k\}_k$ is called the \textit{bi-orthogonal sequence} of $\{f_k\}_{k \in \N}$ in $\overline{\operatorname{span}\{f_k\}_k}$.
\end{Proposition}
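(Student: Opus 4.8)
The plan is to run the standard frame-theoretic argument, working throughout inside the closed subspace $\mathcal{H}':=\overline{\operatorname{span}\{f_k\}_k}$, on which $\{f_k\}_k$ is by construction a Riesz basis. Recall from the preceding discussion that the synthesis operator $T\colon\ell^2(\N)\to\mathcal{H}'$ is bounded, bijective and has bounded inverse, the injectivity being precisely the lower Riesz estimate in \eqref{eq:riesz}; moreover $S=TT^*$ as an operator on $\mathcal{H}'$, and the coefficient sequence in the expansion $f=\sum_k c_k f_k$ is the unique $c=(c_k)\in\ell^2(\N)$ with $Tc=f$.

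First I would show that $S$ is boundedly invertible on $\mathcal{H}'$ with self-adjoint inverse. Since $S=TT^*$, it is self-adjoint and positive. Because $T$ is a bounded bijection of Hilbert spaces with bounded inverse, so is $T^*\colon\mathcal{H}'\to\ell^2(\N)$, with $(T^*)^{-1}=(T^{-1})^*$ and hence $\|(T^*)^{-1}\|\le A^{-1/2}$; consequently $\|T^*f\|\ge A^{1/2}\|f\|$ for every $f\in\mathcal{H}'$. Therefore $\langle Sf,f\rangle=\|T^*f\|^2\ge A\|f\|^2$, so $S\ge A\,I_{\mathcal{H}'}$ and $S^{-1}$ exists as a bounded, self-adjoint operator on $\mathcal{H}'$.

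The core step is then the identity $SS^{-1}=I_{\mathcal{H}'}$ combined with the factorisation $S=TT^*$: for any $f\in\mathcal{H}'$,
\[
f=S(S^{-1}f)=TT^*(S^{-1}f)=T\bigl(\{\langle S^{-1}f,f_k\rangle\}_k\bigr)=\sum_{k\in\N}\langle S^{-1}f,f_k\rangle f_k=\sum_{k\in\N}\langle f,S^{-1}f_k\rangle f_k,
\]
where the last equality uses that $S^{-1}$ is self-adjoint and where the interchange of the bounded operator $S^{-1}$ with the norm-convergent series is justified by continuity. Subtracting this from the given expansion $f=\sum_k c_k f_k$ yields $\sum_k\bigl(c_k-\langle f,S^{-1}f_k\rangle\bigr)f_k=0$, with coefficient sequence in $\ell^2(\N)$ (it equals $c-T^*S^{-1}f$). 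Applying the lower Riesz bound, equivalently the injectivity of $T$, forces $c_k=\langle f,S^{-1}f_k\rangle$ for all $k$, which is the claim; the biorthogonality relation $\langle f_j,S^{-1}f_k\rangle=\delta_{jk}$ follows at once by taking $f=f_j$.

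I do not expect a genuine obstacle here, since this is a classical fact; the only points requiring care are that everything must be carried out inside $\mathcal{H}'$ (on all of $\mathcal{H}$ the operator $S$ annihilates $(\mathcal{H}')^{\perp}$ and is not invertible for a proper Riesz sequence), the passage from the Riesz bounds to the coercivity $S\ge A\,I_{\mathcal{H}'}$, and the routine justification that $S^{-1}$ commutes with the convergent sums.
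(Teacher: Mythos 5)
Your argument is correct and is precisely the standard frame-theoretic proof found in the cited reference \cite{Christensen2016}; the paper itself states this proposition without proof, so there is no alternative route to compare against. Your care in restricting $S$ to $\overline{\operatorname{span}\{f_k\}_k}$ before inverting it (since $S$ annihilates the orthogonal complement for a non-complete Riesz sequence) is exactly the point that needs attention, and you handle it properly.
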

Therefore, in order to recover the coefficients of an element $f \in \overline{\operatorname{span}\{f_k\}_k}$ with respect to a Riesz sequence $\{f_k\}_{k \in \N}$, it is sufficient to find the inverse of the frame operator $S$ associated to the Riesz sequence. This goal can be achieved in practice by approximating the frame operator $S$ with a sequence of finite-dimensional matrices of increasing size.

\subsection{Riesz bases of exponentials}
The theory of Fourier series guarantees that $\{\frac{1}{\sqrt{2\pi}}e^{ik \, \cdot}\}_{k \in \Z}$ is an orthonormal Basis for $L^2([0,2\pi])$. As a consequence, $\{e^{ik \, \cdot}\}_{k \in \Z}$ is a Riesz basis in $L^2([0,2\pi])$ with Riesz bounds $A=B=2\pi$. In this section, we will focus on Riesz bases which are similar to the Fourier basis, but allow for non-uniformly spaced frequencies.

\begin{Definition}
A Riesz basis for $L^2(I)$ of the form $\{e^{i\lambda_k\cdot}\}_{k \in \Z}$, where $I \subseteq \R$ is a bounded interval and $\{\lambda_k\}_{k \in \Z} \subseteq \R$ is a real sequence, is called a \textit{Riesz basis of exponentials}. An expansion of the form
\begin{equation}\label{eq:nonharmonic_series}
f(t) = \sum_{k \in \Z} c_k e^{i \lambda_k t}
\end{equation}
in the $L^2(I)$ sense, for $f \in L^2(I)$, is called \textit{nonharmonic Fourier series}.
\end{Definition}

We will be interested in recovering the coefficients $\{c_k\}_{k \in \Z}$ which appear in the expansion \eqref{eq:nonharmonic_series}. For this problem to make sense, such coefficients must be unique, which happens precisely when $\{e^{i\lambda_k\cdot}\}_{k \in \Z}$ is a Riesz sequence.\par
In general, the upper and lower Riesz conditions in \eqref{eq:riesz} are unrelated. In the context of families of exponentials, however, the situation is different: if the sequence $\{\lambda_k\}_{k\in\Z}$ consists of distinct points, the existence of a lower Riesz bound for $\{e^{i\lambda_k\cdot}\}_{k \in \Z}$ in $L^2(-\pi,\pi)$ implies the existence of the upper bound. In other words, the lower condition is enough to guarantee that $\{e^{i\lambda_k\cdot}\}_{k \in \Z}$ is a Riesz sequence. This is the content of the following theorem:

\begin{Theorem}[{\cite[Theorem~9.8.5]{Christensen2016}}]\label{thm:lower}
Take $\{\lambda_k\}_{k \in \Z} \subseteq \R$. Suppose that   there exists a constant $A > 0$ such that
\begin{equation}\label{eq:lower_riesz}
A \, \sum_{k \in \Z}|c_k|^2 \leq \Big\Vert \sum_{k \in \Z}c_ke^{i\lambda_k\cdot} \Big\Vert^2_{L^2(I)}
\end{equation}
for all finite scalar sequences $\{c_k\}_{k \in \Z}$. Then $\{e^{i\lambda_k\cdot}\}_{k \in \Z}$ is a Riesz sequence in $L^2(I)$. 
\end{Theorem}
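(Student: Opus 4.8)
The plan is to establish the two inequalities defining a Riesz sequence for $\{e^{i\lambda_k\cdot}\}_{k\in\Z}$ in $L^2(I)$. The lower bound is precisely the hypothesis \eqref{eq:lower_riesz}, valid for all finite sequences, so the entire task is to produce the matching upper bound $\norm{\sum_k c_k e^{i\lambda_k\cdot}}_{L^2(I)}^2\le B\sum_k|c_k|^2$ for every finite scalar sequence $(c_k)$; a density argument then extends the synthesis operator $(c_k)\mapsto\sum_k c_k e^{i\lambda_k\cdot}$ to a bounded, bounded-below map $\ell^2(\Z)\to L^2(I)$, which is exactly the Riesz-sequence property.

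First I would extract from \eqref{eq:lower_riesz} that the frequencies are \emph{uniformly separated}: there is $\delta>0$ with $|\lambda_j-\lambda_k|\ge\delta$ whenever $j\ne k$. Testing \eqref{eq:lower_riesz} on the sequence with $c_j=1$, $c_k=-1$ and all other entries zero gives $2A\le\norm{e^{i\lambda_j\cdot}-e^{i\lambda_k\cdot}}_{L^2(I)}^2$; on the other hand $|e^{i\lambda_j t}-e^{i\lambda_k t}|\le|\lambda_j-\lambda_k|\,|t|$ bounds the right-hand side by $|\lambda_j-\lambda_k|^2\int_I t^2\,dt$, so $|\lambda_j-\lambda_k|\ge\delta:=\sqrt{2A/\int_I t^2\,dt}>0$. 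In particular the $\lambda_k$ are distinct, and any interval of unit length contains at most $\lceil 1/\delta\rceil+1$ of them.

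The main step is a Plancherel--P\'olya type bound turning this separation into the upper Riesz estimate. I would fix a Gaussian majorant $\phi(t):=Me^{-\epsilon t^2}$ with $M$ large enough that $\phi\ge 1$ on $I$, so that for every finite $(c_k)$
\[
\norm{\textstyle\sum_k c_k e^{i\lambda_k\cdot}}_{L^2(I)}^2\le\int_\R\Bigl|\sum_k c_k e^{i\lambda_k t}\Bigr|^2\phi(t)\,dt=\sum_{j,k}c_j\overline{c_k}\,\kappa(\lambda_j-\lambda_k),
\]
where $\kappa(\xi):=\int_\R\phi(t)e^{i\xi t}\,dt$ is again Gaussian, hence enjoys Gaussian decay $|\kappa(\xi)|\le C e^{-\xi^2/(4\epsilon)}$. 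Bounding $\bigl|\sum_{j,k}c_j\overline{c_k}\kappa(\lambda_j-\lambda_k)\bigr|\le\sum_{j,k}|c_j||c_k|\,|\kappa(\lambda_j-\lambda_k)|$ and invoking Schur's test, this is at most $\bigl(\sup_j\sum_k|\kappa(\lambda_j-\lambda_k)|\bigr)\sum_k|c_k|^2$; the supremum is finite because, by the $\delta$-separation from the previous step, $\sum_k|\kappa(\lambda_j-\lambda_k)|$ is dominated uniformly in $j$ by a convergent Gaussian series. Taking $B:=\sup_j\sum_k|\kappa(\lambda_j-\lambda_k)|$ completes the upper bound, and then the density argument above finishes the proof.

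The delicate point is this last step: since the hypothesis is only assumed for finite sequences, the interchange of sum and integral, the passage to the kernel $\kappa$, and the Schur estimate must all be carried out at the level of finite sums and only afterwards extended by density; and one must be sure the majorant's Fourier transform decays fast enough that the Schur sum converges uniformly in $j$ — which is exactly why the separation property is needed, and conceptually why a one-sided (lower) estimate for a family of exponentials is strong enough to force the full two-sided Riesz condition.
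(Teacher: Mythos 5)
Your proof is correct, and since the paper only cites this result from Christensen without proving it, your argument is exactly the standard route used there: the lower bound applied to $c_j=1$, $c_k=-1$ forces uniform separation of the $\lambda_k$, and separation yields the upper (Bessel) bound via a Gaussian majorant of $\mathds{1}_I$ together with Schur's test on the resulting kernel $\kappa(\lambda_j-\lambda_k)$. All the steps (the Lipschitz bound $|e^{i\lambda_j t}-e^{i\lambda_k t}|\le|\lambda_j-\lambda_k|\,|t|$, the finite-sum interchange, and the uniform convergence of the Gaussian Schur sums under $\delta$-separation) check out.
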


The following criterion gives a sufficient condition for inequality \eqref{eq:lower_riesz} to hold.

\begin{Theorem}[\cite{Young1981}, Chapter 4, Section 3, Theorem 3]\label{thm:inf}
 If $\{\lambda_k\}_{k \in \Z}$ is an increasing sequence of real numbers such that
\begin{equation}\notag
\gamma:=\inf_{k \in \mathbb{Z}}(\lambda_{k+1}-\lambda_k) > \frac{\pi}{T},
\end{equation}
then $\{e^{i\lambda_k \cdot}\}_{k\in\Z}$ satisfies the lower Riesz inequality \eqref{eq:lower_riesz} in $L^2([-T, T])$ with lower bound 
\begin{equation}\notag
A = \frac{2}{\pi}\Big(1-\Big(\frac{\pi}{T\gamma}\Big)^2\Big) .
\end{equation}
\end{Theorem}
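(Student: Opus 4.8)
The inequality \eqref{eq:lower_riesz} is only required for finite sequences $\{c_k\}$, so throughout we may take $f(t)=\sum_k c_k e^{i\lambda_k t}$ to be a finite sum, and the task is to bound $\|f\|_{L^2([-T,T])}^2$ below by $A\sum_k|c_k|^2$. The plan is to run Ingham's weighting argument (this is the proof in \cite{Young1981}). Introduce the even, continuous weight supported in $[-T,T]$,
\[
w(t)=\begin{cases}\cos(\pi t/(2T)),& |t|\le T,\\ 0,& |t|>T,\end{cases}
\]
which satisfies $0\le w\le 1$ on $[-T,T]$, so that $\|f\|_{L^2([-T,T])}^2\ge\int_{-T}^{T}|f(t)|^2 w(t)\,dt$. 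The virtue of this $w$ is that $W(u):=\int_{-T}^{T}w(t)e^{iut}\,dt=\dfrac{4\pi T\cos(uT)}{\pi^2-4T^2u^2}$ is explicit, with $W(0)=4T/\pi$, and with the decreasing majorant $|W(u)|\le\dfrac{4\pi T}{4T^2u^2-\pi^2}$ valid for $|u|\ge\gamma$ --- here is where the hypothesis $\gamma>\pi/T$ enters, since it keeps $4T^2u^2-\pi^2>3\pi^2>0$.

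Expanding the quadratic form and exchanging the finite sum with the integral,
\[
\int_{-T}^{T}|f(t)|^2 w(t)\,dt=\sum_{j,k}c_k\overline{c_j}\,W(\lambda_k-\lambda_j)=W(0)\sum_k|c_k|^2+\sum_{j\ne k}c_k\overline{c_j}\,W(\lambda_k-\lambda_j).
\]
For the off-diagonal part, the separation condition gives $|\lambda_k-\lambda_j|\ge\gamma|k-j|$, so with $|c_k\overline{c_j}|\le\tfrac12(|c_k|^2+|c_j|^2)$ and monotonicity of the majorant,
\[
\Bigl|\sum_{j\ne k}c_k\overline{c_j}\,W(\lambda_k-\lambda_j)\Bigr|\le\Bigl(8\pi T\sum_{m\ge1}\frac{1}{4T^2\gamma^2 m^2-\pi^2}\Bigr)\sum_k|c_k|^2.
\]
The series is summed by the partial-fraction expansion of the cotangent, $\sum_{m\ge1}(m^2-a^2)^{-1}=(1-\pi a\cot\pi a)/(2a^2)$ with $a=\pi/(2T\gamma)\in(0,\tfrac12)$, which rewrites the bound as $W(0)\,(1-\pi a\cot\pi a)\sum_k|c_k|^2$. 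Subtracting from the diagonal term leaves
\[
\|f\|_{L^2([-T,T])}^2\ \ge\ W(0)\,\pi a\cot(\pi a)\sum_k|c_k|^2=\frac{2\pi}{\gamma}\cot\!\Bigl(\frac{\pi^2}{2T\gamma}\Bigr)\sum_k|c_k|^2,
\]
a genuinely positive bound because $0<\pi a<\pi/2$; an elementary lower estimate for the cotangent then yields a bound of the form $\tfrac{2}{\pi}\bigl(1-(\pi/(T\gamma))^2\bigr)\sum_k|c_k|^2$, which is \eqref{eq:lower_riesz}.

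The step I expect to be the crux is the off-diagonal estimate: one needs the correction term to be \emph{strictly} dominated by the diagonal term $W(0)$ precisely when $T\gamma>\pi$, not merely under a stronger separation hypothesis. A crude bound $|W(u)|\lesssim u^{-2}$ with $\sum_m m^{-2}=\pi^2/6$ loses constants and only gives positivity above a threshold larger than $\pi$; getting the sharp threshold (and the clean constant) is exactly what forces the exact summation of the cotangent series, or, equivalently, a convexity comparison of $\{\lambda_k\}$ with the arithmetic progression $\{k\gamma\}$. The remaining ingredients --- the choice of $w$ and the evaluation of $W$, the interchange of the finite sum with the integral, and the reduction to finite sequences --- are routine.
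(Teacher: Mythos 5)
This statement is imported from Young's book and the paper gives no proof of its own, so there is nothing internal to compare against; your reconstruction is exactly Ingham's weight-function argument, which is the proof in the cited source, and it is essentially correct: the computation of $W$, the majorant for $|u|\ge\gamma$, the diagonal/off-diagonal split with $|c_k\overline{c_j}|\le\tfrac12(|c_k|^2+|c_j|^2)$, and the cotangent summation are all right. The one step you wave at is the last one, and it hides a small discrepancy with the statement as printed. Writing $x=\pi/(T\gamma)\in(0,1)$, your bound is $\frac{2\pi}{\gamma}\cot(\pi x/2)=2Tx\cot(\pi x/2)$, and the elementary inequality $\pi x\cot(\pi x/2)\ge 1-x^2$ on $(0,1)$ (check the endpoints and the sign of the derivative at $x=1$) turns this into
\[
\|f\|_{L^2([-T,T])}^2\ \ge\ \frac{2T}{\pi}\Bigl(1-\bigl(\tfrac{\pi}{T\gamma}\bigr)^2\Bigr)\sum_k|c_k|^2,
\]
which is the classical Ingham constant and scales correctly with $T$. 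The constant $A=\frac{2}{\pi}\bigl(1-(\pi/(T\gamma))^2\bigr)$ stated in the theorem omits this factor of $T$ and does \emph{not} follow from your chain of inequalities unless $T\ge1$; indeed for $T<1/\pi$ it is already violated by a single exponential, since then $\int_{-T}^T|e^{i\lambda_0 t}|^2dt=2T<\frac{2}{\pi}$. Since the paper only ever invokes the theorem with $T\ge1$, this is harmless in context, but you should state the cotangent inequality explicitly and note that your argument delivers the $T$-scaled constant, of which the printed $A$ is the special case $T\ge1$.
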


The following consequence of this result regarding families of cosines is readily derived.
\begin{Corollary}\label{cor:cos}
Take $T>0$.
\begin{enumerate}
\item
 If $\{\lambda_k\}_{k \in \N}$ is an increasing sequence of non-negative numbers such that $\lambda_0=0$ and
\begin{equation}\notag
\gamma:=\inf_{k \in \mathbb{N}}(\lambda_{k+1}-\lambda_k) > \frac{\pi}{T},
\end{equation}
then $\{\cos(\lambda_k \cdot)\}_{k\in\N}$ is a Riesz sequence in $L^2([0, T])$ with lower bound 
\begin{equation}\notag
A = \frac{1}{2\pi}\Big(1-\Big(\frac{\pi}{T\gamma}\Big)^2\Big) .
\end{equation}
\item
 If $\{\lambda_k\}_{k \in \N_+}$ is an increasing sequence of positive numbers such that
\begin{equation}\notag
\gamma:=\min\bigl(\,\inf_{k \in \mathbb{N_+}}(\lambda_{k+1}-\lambda_k),2\lambda_1\bigr) > \frac{\pi}{T},
\end{equation}
then $\{\cos(\lambda_k \cdot)\}_{k\in\N_+}$ is a Riesz sequence in $L^2([0, T])$ with lower bound 
\begin{equation}\notag
A = \frac{1}{2\pi}\Big(1-\Big(\frac{\pi}{T\gamma}\Big)^2\Big) .
\end{equation}
\end{enumerate}
\end{Corollary}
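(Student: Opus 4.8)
The plan is to pass from cosines to complex exponentials, invoke the results of Appendix~\ref{sec:riesz} (Theorems~\ref{thm:inf} and~\ref{thm:lower}) for a symmetrically reflected frequency set, and then transfer the bounds from $L^2([-T,T])$ back to $L^2([0,T])$ using that every even function $g$ satisfies $\|g\|_{L^2([-T,T])}^2 = 2\|g\|_{L^2([0,T])}^2$.

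For part~1, given $\{\lambda_k\}_{k\in\N}$ with $\lambda_0=0$, I would set $\mu_0:=0$, $\mu_k:=\lambda_k$ and $\mu_{-k}:=-\lambda_k$ for $k\ge 1$. Then $\{\mu_j\}_{j\in\Z}$ is strictly increasing, and its consecutive gaps are the numbers $\lambda_{k+1}-\lambda_k$ ($k\ge 1$) together with their mirror images and the two gaps $\mu_1-\mu_0=\mu_0-\mu_{-1}=\lambda_1=\lambda_1-\lambda_0\ge\gamma$ adjacent to the origin; hence $\inf_{j\in\Z}(\mu_{j+1}-\mu_j)=\gamma>\pi/T$. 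For part~2, where there is no zero frequency, I would instead let $\{\mu_j\}_{j\in\Z}$ be the increasing enumeration of $\{\lambda_k:k\ge 1\}\cup\{-\lambda_k:k\ge 1\}$; now the only gap not of the form $\lambda_{k+1}-\lambda_k$ is the single one straddling zero, which equals $2\lambda_1$, so $\inf_{j\in\Z}(\mu_{j+1}-\mu_j)=\min\bigl(\inf_{k\in\N_+}(\lambda_{k+1}-\lambda_k),\,2\lambda_1\bigr)=\gamma>\pi/T$. By Theorem~\ref{thm:inf}, $\{e^{i\mu_j\cdot}\}_{j\in\Z}$ satisfies the lower Riesz inequality in $L^2([-T,T])$ with constant $A'=\tfrac{2}{\pi}\bigl(1-(\tfrac{\pi}{T\gamma})^2\bigr)$, and by Theorem~\ref{thm:lower} it is in fact a Riesz sequence there, so it also has an upper bound $B'$.

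Now, for a finite scalar sequence $\{c_k\}$ put $g(t):=\sum_k c_k\cos(\lambda_k t)$, and expand each cosine as $\tfrac12(e^{i\lambda_k t}+e^{-i\lambda_k t})$ to write $g(t)=\sum_{j\in\Z} d_j e^{i\mu_j t}$, where in part~1 one has $d_0=c_0$ and $d_{\pm k}=c_k/2$ for $k\ge 1$, and in part~2 each $c_k/2$ ($k\ge 1$) occurs as $d_j$ for exactly two indices $j$. In both cases $\tfrac12\sum_k|c_k|^2\le\sum_j|d_j|^2\le\sum_k|c_k|^2$. Since $g$ is even, $2\|g\|_{L^2([0,T])}^2=\|g\|_{L^2([-T,T])}^2$, so combining with the two-sided exponential estimate yields
\[
\frac{A'}{4}\sum_k|c_k|^2\;\le\;\|g\|_{L^2([0,T])}^2\;\le\;\frac{B'}{2}\sum_k|c_k|^2 .
\]
Finally $\tfrac{A'}{4}=\tfrac{1}{2\pi}\bigl(1-(\tfrac{\pi}{T\gamma})^2\bigr)=A$, which is the asserted lower bound, and the existence of the upper bound $B'/2$ shows that $\{\cos(\lambda_k\cdot)\}$ is indeed a Riesz sequence in $L^2([0,T])$.

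The only genuinely delicate point is the bookkeeping at the origin: in part~2 the reflected sequence has a single gap of length $2\lambda_1$ across zero, whereas in part~1 the presence of $\mu_0=0$ splits this into two gaps of length $\lambda_1$ each. This is exactly why the definition of $\gamma$ in part~2 must include the term $2\lambda_1$, and keeping track of this is what distinguishes the two cases; the remaining steps — the reflection, the passage even function $\leftrightarrow$ half-interval, and the constant $A'/4=A$ — are routine.
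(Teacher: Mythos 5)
Your proposal is correct and follows essentially the same route as the paper's proof: reflect the frequencies to $\Z$ (respectively $\Z^*$), rewrite the cosine sum as a sum of exponentials, apply Theorem~\ref{thm:inf} on $L^2([-T,T])$ together with Theorem~\ref{thm:lower} for the upper bound, and transfer to $L^2([0,T])$ via evenness, with the same constant bookkeeping $A'/4=A$. The gap analysis at the origin distinguishing the two cases is exactly the point the paper makes as well.
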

\begin{proof}
1. Let $\{c_k\}_{k\in\N}$ be a finite sequence. 
Setting $\lambda_{-k}=-\lambda_k$ and $c_{-k}=c_k$ for $k\ge 1$ we have
\begin{equation}\label{eq:cos_e}
\sum_{k\in\N}c_k \cos(\lambda_k t)
=\sum_{k\in\N}c_k \,\frac{e^{i\lambda_k t}+e^{-i\lambda_k t}}{2}
=c_0+ \sum_{k\in\Z^*} \frac{c_k}{2} \,e^{i\lambda_k t}.
\end{equation}
By construction we have
\begin{equation}\notag
\inf_{k \in \mathbb{Z}}(\lambda_{k+1}-\lambda_k) =\gamma > \frac{\pi}{T}.
\end{equation}
Therefore, by \eqref{eq:cos_e} and Theorem~\ref{thm:inf} we have
\[
\begin{split}
 \Big\Vert \sum_{k \in \N}c_k \cos(\lambda_k \cdot) \Big\Vert^2_{L^2([0,T])}
 &=\frac12  \Big\Vert c_0+ \sum_{k\in\Z^*} \frac{c_k}{2} \,e^{i\lambda_k \cdot} \Big\Vert^2_{L^2([-T,T])}\\
 &\ge \frac{4A}{2} \left(|c_0|^2+ \sum_{k \in \Z^*}|\frac{c_k}{2}|^2\right)\\
&= 2A \left(|c_0|^2+ \frac12\sum_{k \in \N_+}|c_k|^2\right)\\
&\ge A\sum_{k \in \N}|c_k|^2,
\end{split}
\]
This shows the lower bound. The upper bound is an immediate consequence of \eqref{eq:cos_e} and of Theorem~\ref{thm:lower}.

2. Let $\{c_k\}_{k\in\N_+}$ be a finite sequence.  
Setting $\lambda_{-k}=-\lambda_k$ and $c_{-k}=c_k$ for $k\ge 1$ we have
\begin{equation}\label{eq:cos_e2}
\sum_{k\in\N_+}c_k \cos(\lambda_k t)
=\sum_{k\in\N_+}c_k \,\frac{e^{i\lambda_k t}+e^{-i\lambda_k t}}{2}
= \sum_{k\in\Z^*} \frac{c_k}{2} \,e^{i\lambda_k t}.
\end{equation}
Note that the new sequence of frequencies is given by $\{\lambda_k:k\in\Z^*\}=\{\dots,-\lambda_2,-\lambda_1,\lambda_1,\lambda_2,\dots\}$, and so its minimum distance is given by $\gamma$.
Therefore, arguing as above, by \eqref{eq:cos_e2} and Theorem~\ref{thm:inf} we have
\[
 \Big\Vert \sum_{k \in \N_+}c_k \cos(\lambda_k \cdot) \Big\Vert^2_{L^2([0,T])}
=\frac12  \Big\Vert \sum_{k\in\Z^*} \frac{c_k}{2} \,e^{i\lambda_k \cdot} \Big\Vert^2_{L^2([-T,T])}
 \ge  \frac{4A}{2}   \sum_{k \in \Z^*}|\frac{c_k}{2}|^2
= A  \sum_{k \in \N_+}|c_k|^2.
\]
This shows the lower bound. The upper bound is an immediate consequence of \eqref{eq:cos_e2} and of Theorem~\ref{thm:lower}
\end{proof}

The next result deals again with families of cosines. For these systems to be a Riesz basis, it is sufficient to check the behaviour of the tail of $\{\lambda_k\}_{k \in \N}$.

\begin{Theorem}[{\cite[Lemma~4]{He2001}}]\label{lem:cos}
Let $\{\lambda_n\}_{n \in \N}$, $\{\mu_n\}_{n \in \N} \subseteq [0, + \infty)$ be sequences of nonnegative distinct real numbers ($\lambda_m \neq \lambda_n$ and  $\mu_m \neq \mu_n$ for $m \neq n$) such that
\begin{equation}\notag
\sum_{n \in \N} (\lambda_n - \mu_n)^2 < +\infty.
\end{equation}
Then $\{\cos(\lambda_n \cdot)\}_{n \in \N}$ is a Riesz basis in $L^2([0,1])$ if and only if $\{\cos(\mu_n \cdot)\}_{n \in \N}$ is a Riesz basis in $L^2([0,1])$.
\end{Theorem}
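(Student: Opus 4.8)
The plan is to prove only the implication ``if $\{\cos(\mu_n\cdot)\}_{n\in\N}$ is a Riesz basis in $L^2([0,1])$ then so is $\{\cos(\lambda_n\cdot)\}_{n\in\N}$'', since the statement is symmetric in $\{\lambda_n\}$ and $\{\mu_n\}$. The quantitative starting point is the elementary bound coming from $\cos a-\cos b=-2\sin\frac{a+b}{2}\sin\frac{a-b}{2}$: for $t\in[0,1]$ we have $|\cos(\lambda_n t)-\cos(\mu_n t)|\le 2\bigl|\sin\tfrac{(\lambda_n-\mu_n)t}{2}\bigr|\le|\lambda_n-\mu_n|\,t$, whence
\[
\|\cos(\lambda_n\cdot)-\cos(\mu_n\cdot)\|_{L^2([0,1])}^2\le \tfrac13\,|\lambda_n-\mu_n|^2 .
\]
Summing over $n$ and using the hypothesis yields $R^2:=\sum_n\|\cos(\lambda_n\cdot)-\cos(\mu_n\cdot)\|_{L^2([0,1])}^2<+\infty$. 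Combining this with the triangle inequality and the Cauchy--Schwarz estimate $\bigl\|\sum_n c_n d_n\bigr\|\le\bigl\|(c_n)_n\bigr\|_{\ell^2}\bigl(\sum_n\|d_n\|^2\bigr)^{1/2}$ shows at once that the upper Riesz (Bessel) bound passes from $\{\cos(\mu_n\cdot)\}_{n\in\N}$ to $\{\cos(\lambda_n\cdot)\}_{n\in\N}$; hence the latter is always a Bessel sequence.

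For the lower bound I would split at a large index. Let $A$ be the lower Riesz bound of $\{\cos(\mu_n\cdot)\}_{n\in\N}$; then every subfamily $\{\cos(\mu_n\cdot)\}_{n\ge N}$ is a Riesz sequence with the same lower bound $A$. Choosing $N$ so that $\sum_{n\ge N}\|\cos(\lambda_n\cdot)-\cos(\mu_n\cdot)\|_{L^2([0,1])}^2<A$ and invoking the Paley--Wiener type perturbation theorem for Riesz sequences (\cite{Young1981,Christensen2016}) shows that $\{\cos(\lambda_n\cdot)\}_{n\ge N}$ is a Riesz sequence in $L^2([0,1])$. The finitely many head functions $\cos(\lambda_0\cdot),\dots,\cos(\lambda_{N-1}\cdot)$ are linearly independent, since a finite linear combination of cosines with distinct frequencies extends to an entire function of $t$, which cannot vanish on $[0,1]$ without vanishing identically. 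A routine weak-compactness argument (decompose a would-be bad sequence of coefficients into a head and a tail, extract a strong limit of the finite-dimensional head part and a weak limit of the tail) then shows that $\{\cos(\lambda_n\cdot)\}_{n\in\N}$ is a Riesz sequence as soon as $\operatorname{span}\{\cos(\lambda_n\cdot):n<N\}\cap\overline{\operatorname{span}\{\cos(\lambda_n\cdot):n\ge N\}}=\{0\}$, and a Riesz basis if, in addition, $\{\cos(\lambda_n\cdot)\}_{n\in\N}$ is complete in $L^2([0,1])$.

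The remaining, and genuinely hard, point is therefore the completeness and minimality of $\{\cos(\lambda_n\cdot)\}_{n\in\N}$. I would argue through entire functions of exponential type: a function $h\in L^2([0,1])$ orthogonal to all $\cos(\lambda_n\cdot)$ (respectively, to all of them except the $k$-th) produces the even entire function $H(z)=\int_0^1 h(t)\cos(zt)\,dt$, which has exponential type $\le1$, lies in the Paley--Wiener class on the real line, and vanishes on $\{\lambda_n:n\in\N\}$ (respectively, on $\{\lambda_n:n\ne k\}$). Since $\{\cos(\mu_n\cdot)\}_{n\in\N}$ is a Riesz basis, the set $\{\mu_n\}$ is a complete interpolating sequence for this space and is the zero set of a suitable generating function, and the hypothesis $\sum_n(\lambda_n-\mu_n)^2<+\infty$ is precisely what is needed to transfer the generating function from $\{\mu_n\}$ to $\{\lambda_n\}$ by controlling the ratio of the two canonical products; this forces $H\equiv0$, hence $h=0$, giving completeness, and the same argument with one frequency removed gives minimality. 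This is the classical $\ell^2$-stability theory of nonharmonic Fourier series (Levinson, Kadec, Redheffer, Avdonin; see \cite[Ch.~4]{Young1981} and \cite{Christensen2016}) in its cosine version, and it is the technically demanding ingredient — everything else above is bookkeeping. Assembling the pieces, $\{\cos(\lambda_n\cdot)\}_{n\in\N}$ is a complete Riesz sequence, i.e.\ a Riesz basis in $L^2([0,1])$, and the reverse implication is the same statement with the roles of $\lambda$ and $\mu$ exchanged.
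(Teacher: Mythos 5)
You should first be aware that the paper does not prove this statement at all: it is imported verbatim as \cite[Lemma~4]{He2001}, so there is no internal argument to compare yours against, and your proposal has to stand on its own. Its elementary parts do: the bound $\|\cos(\lambda_n\cdot)-\cos(\mu_n\cdot)\|_{L^2([0,1])}^2\le\frac13|\lambda_n-\mu_n|^2$ is correct, the transfer of the upper (Bessel) bound is correct, and the Paley--Wiener-type perturbation showing that the tail $\{\cos(\lambda_n\cdot)\}_{n\ge N}$ is a Riesz sequence once $\sum_{n\ge N}\|\cos(\lambda_n\cdot)-\cos(\mu_n\cdot)\|^2<A$ is correct.

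The two steps that actually carry the theorem, however, are asserted rather than proved. First, to glue the finite head onto the tail you need $\operatorname{span}\{\cos(\lambda_n\cdot):n<N\}\cap\overline{\operatorname{span}}\{\cos(\lambda_n\cdot):n\ge N\}=\{0\}$. Linear independence of the head functions among themselves (which you do prove, via analytic continuation) says nothing about this intersection; verifying it amounts to the minimality of the full family, i.e.\ precisely the property you defer to the entire-function argument, so as written this step is circular. Second, the completeness/minimality transfer is the genuine content of the theorem, and your treatment of it is a citation of ``the classical $\ell^2$-stability theory'' rather than an argument: the claim that $\sum_n(\lambda_n-\mu_n)^2<\infty$ ``is precisely what is needed to transfer the generating function'' from $\{\mu_n\}$ to $\{\lambda_n\}$ is exactly the nontrivial Redheffer--Young-type estimate on ratios of canonical products, it is where the hypothesis that the $\lambda_n$ are \emph{distinct} must enter (your sketch never uses it there, and without it the statement is false), and it is not carried out. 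A cleaner architecture, which would also absorb your head/tail gluing problem, is to note that quadratic closeness to a Riesz basis makes the natural intertwining operator a Hilbert--Schmidt perturbation of the identity, so by the Fredholm alternative the entire theorem reduces to the single claim that $\{\cos(\lambda_n\cdot)\}_{n\in\N}$ is $\omega$-independent (equivalently, complete); but that claim still requires the zero-counting/generating-function analysis, and until it is written out the proof is incomplete. A final minor point: if you run that analysis through exponential systems on a symmetric interval, you must treat the symmetrised frequency set $\{\pm\lambda_n\}$ with care, in particular the coincidence $+\lambda_n=-\lambda_n$ when some $\lambda_n=0$.
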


\end{document}